\documentclass[12pt,reqno]{article}
\usepackage{fullpage}
\usepackage{float}
\usepackage{upgreek}
\usepackage{booktabs, enumerate,float}
\usepackage{amsthm,amsmath,amssymb}
\usepackage{tikz}
\usepackage{subfigure}
\usepackage{tabularx}
\usetikzlibrary{arrows,matrix,positioning}
\usepackage{float,multirow,multicol}
\usepackage{graphicx,tipa}
\usepackage[export]{adjustbox}

\definecolor{webgreen}{rgb}{0,.5,0}
\definecolor{webbrown}{rgb}{.6,0,0}
\usepackage[colorlinks=true,
linkcolor=webgreen,
filecolor=webbrown,
citecolor=webgreen]{hyperref}

\newcommand{\arxiv}[1]{\href{http://arxiv.org/abs/#1}{\texttt{arXiv:#1}}}
\newcommand{\seqnum}[1]{\href{http://oeis.org/#1}{\underline{#1}}}
\newcommand{\Figs}[1]{\hyperref[#1]{Figure~\ref*{#1}}}
\newcommand{\Tabs}[1]{\hyperref[#1]{Table~\ref*{#1}}}
\newcommand{\Cor}[1]{\hyperref[#1]{Corollary~\ref*{#1}}}
\newcommand{\Prop}[1]{\hyperref[#1]{Proposition~\ref*{#1}}}
\newcommand{\Thm}[1]{\hyperref[#1]{Theorem~\ref*{#1}}}
\newcommand{\Lem}[1]{\hyperref[#1]{Lemma~\ref*{#1}}}
\newcommand{\Sec}[1]{\hyperref[#1]{section~\ref*{#1}}}
\newcommand{\Subsec}[1]{\hyperref[#1]{subsection~\ref*{#1}}}
\newcommand{\Rem}[1]{\hyperref[#1]{Remark~\ref*{#1}}}
\everymath{\displaystyle}

\theoremstyle{plain}
\newtheorem{theorem}{Theorem}

\newtheorem{corollary}[theorem]{Corollary}
\newtheorem{proposition}[theorem]{Proposition}

\theoremstyle{definition}
\newtheorem{definition}[theorem]{Definition}
\newtheorem{example}[theorem]{Example}
\newtheorem{notation}[theorem]{Notation}

\theoremstyle{remark}
\newtheorem{remark}[theorem]{Remark}

\title{\bf  Statistics on some classes of knot shadows}
\author{Franck Ramaharo\\
\small D\'epartement de Math\'ematiques et Informatique\\[-0.8ex]
\small Universit\'e d'Antananarivo\\[-0.8ex] 
\small 101 Antananarivo, Madagascar\\
\small\href{mailto:franck.ramaharo@gmail.com}{\tt franck.ramaharo@gmail.com}\\
}

\date{\small\today\\}

\begin{document}

\maketitle

\begin{abstract}
The present paper is concerned with the enumeration of the state diagrams for some classes of  knot shadows endowed with the usual connected sum operation. We    focus on shadows that are recursively generated by knot shadows with up to $ 3 $ crossings, and for which the enumeration problem is solved with the help of  generating polynomials.

\bigskip\noindent  {Keywords:} knot shadow, state diagram, generating polynomial.
\end{abstract}

\section{Introduction}
Let a mathematical knot be identified with its regular projection onto the sphere $ S^2 $. The corresponding representation, called \textit{shadow}, is a planar quadrivalent diagram without the usual under/over information at every crossing \cite{Denton,Hanaki,HHJJMR,MRS}. We can split each crossing of the diagram in one of two ways as shown in \Figs{Fig:split}.
\begin{figure}[H]
\centering
\hspace*{\fill}
\subfigure[Type $ 0 $ split]{\includegraphics[width=0.2\linewidth]{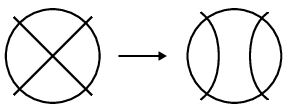}}\hfill%
\subfigure[Type $ 1 $ split]{\includegraphics[width=0.2\linewidth]{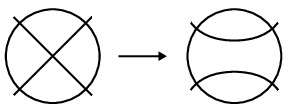}}%
\hspace*{\fill}
\caption{Two types of splits.}
\label{Fig:split}
\end{figure}

By \textit{state} is meant one of the obtained diagram with each crossing being split either by a \textit{type $ 0 $} or a \textit{type $ 1$   split}. A state can be seen as a collection of disjoint non-intersecting closed curves called \textit{circles}. For a state $ S $, we let $ |S| $ denote the number of its circles. Then, for a knot diagram $ K $ with $ m $ crossings, we define the following statistics by summing  over all states $ S $:  
\begin{equation}\label{eq:statesum}
K(x)=\sum_{S}^{} x^{|S|}=\sum_{k\geq0}^{}\sigma\left(m,k\right)x^k, 
\end{equation}
where $ \sigma(m,k) $ count the occurrence of the states with $ p $ circles, with $ \sigma(m,0)=0 $ for all $ m $. For the sake of simplicity, we call the state-sum formula \eqref{eq:statesum} the \textit{generating polynomial}. 
In fact, it is a simplified approach to the so-called Kauffman bracket polynomial \cite{GGLDSK,Kauffman}. We intentionally omit the split variables indicating the over- and under-crossing structure since the summation is calculated with respect to the shadow diagram. The generating polynomial is only intended as a tool at enumerating the state diagrams, and no attempt is made here to investigate its topological	property. Moreover, we have the following simplified rule which is then iteratively applied to all crossings in the diagram:

\begin{equation}\label{eq:rules}
\protect\includegraphics[width=.075\linewidth,valign=c]{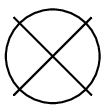}(x)=
\protect\includegraphics[width=.075\linewidth,valign=c]{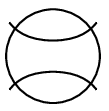}(x)+
\protect\includegraphics[width=.075\linewidth,valign=c]{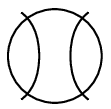}(x),
\end{equation}

In this paper, we  mainly focus on the distribution of the number $ \sigma(m,k) $ defined in \eqref{eq:statesum} in terms of generating polynomial for some particular classes of knot shadows. 

We organize the paper as follows. In \Sec{Sec:background}, we construct a recursive definition of some classes of knot shadows and define the associated closure operation. In \Sec{Sec:background}, we established the generating polynomial for the knots introduced in \Sec{Sec:gp1}. Then in \Sec{Sec:gp2}, we establish the generating polynomial for the closure of the same knots.

\section{Background}\label{Sec:background}	
Throughout this paper, unless explicitly stated otherwise,  the generic term ``knot (diagram)'' refers to a  shadow drawn on the sphere $ S^2 $.
The simplest mathematical knot is the unknot which is a closed loop with no crossings in it. We say that two knots are the same, if one can be continuously deformed to the other so long as no new crossings are introduced  and no crossings are removed. Such deformation is called a \textit{planar isotopy}. A practical  illustration would be to consider the corresponding diagram as a ``highly deformable rubber'' as suggested by Collins \cite[p.\ 12]{Adams}. To set up our framework, we introduce the following deformation which preserve as well the crossings configuration.

\begin{definition}[Denton and Doyle \cite{Denton}] \label{Dfn:0Smove}
When we have a loop on the outside edge of the  diagram, we can redraw this loop around the other side of the diagram by pulling the entire loop around across the far side of the sphere without affecting the  constraints on any of the already existing crossings (see \Figs{Fig:0Smove}). The move is called  a \textit{type 0 move on the sphere}, denoted $ 0S^2 $. 
\begin{figure}[ht]
\centering
\includegraphics[width=.8\linewidth]{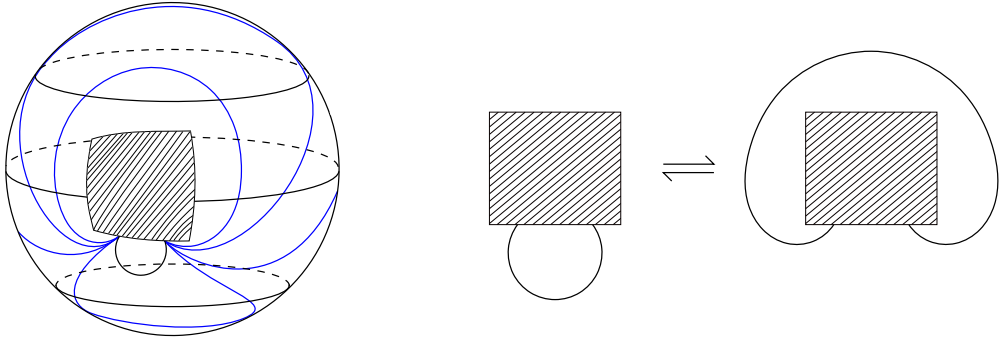}
\caption{Denton and Doyle type $ 0 $ move on the sphere.}
\label{Fig:0Smove}
\end{figure}
\end{definition}

We can then define an equivalence relation on the set of knot shadows such that two knots lie in the same equivalence class if they have the same number of crossings, and if one can be transformed to the other by a finite sequence of $ 0S^2 $ moves (modulo planar isotopy). Restricting ourself to the shadows of up to $ 3 $ crossings, we give in \Tabs{Tab:KnotShadows} all the possible combination of knot under the $ 0S^2 $ move for each given number of crossings \cite[p.\ 14]{Arnold}. We shall refer to this set of knots as \textit{elementary knots}. For our arguments, we next associate these shadow diagrams with the following operations.

\begin{table}[ht]
\centering
\begin{tabularx}{\textwidth}{|l|X|}
\toprule
$ \mathcal{S}_0 $ (\textit{unknots})& 
\protect\includegraphics[width=.05\linewidth,valign=c]{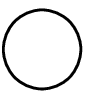}.\\
\midrule
$ \mathcal{S}_1 $ (\textit{$ 1 $-twist loops})& 
\protect\includegraphics[width=.07\linewidth,valign=c]{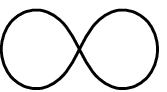}, 
\protect\includegraphics[width=.055\linewidth,valign=c]{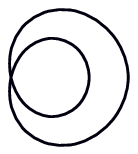}.\\
\midrule
$ \mathcal{S}_{2,1} $ (\textit{$ 1 $-links}) & 
\protect\includegraphics[width=.07\linewidth,valign=c]{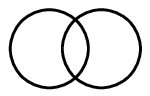}.\\
\midrule
$ \mathcal{S}_{2,2} $ (\textit{$ 2 $-twist loops})&
\protect\includegraphics[width=.10\linewidth,valign=c]{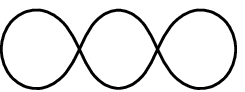}, 
\protect\includegraphics[width=.09\linewidth,valign=c]{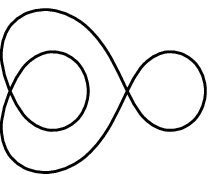}, 
\protect\includegraphics[width=.110\linewidth,valign=c]{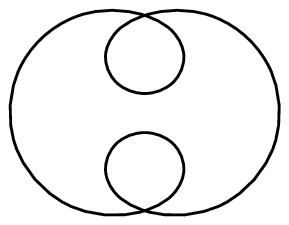}, 
\protect\includegraphics[width=.10\linewidth,valign=c]{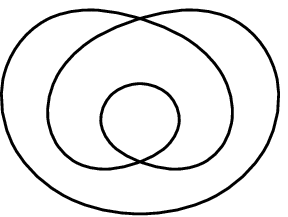}, 
\protect\includegraphics[width=.10\linewidth,valign=c]{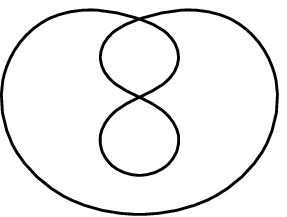}.\\
\midrule
$ \mathcal{S}_{3,1} $ (\textit{$ 3 $-twist loops})& 
\protect\includegraphics[width=.13\linewidth,valign=c]{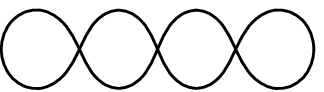}, 
\protect\includegraphics[width=.12\linewidth,valign=c]{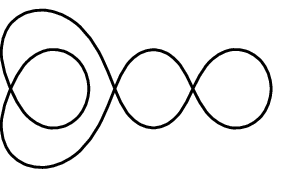}, 
\protect\includegraphics[width=.10\linewidth,valign=c]{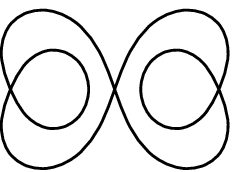}, 
\protect\includegraphics[width=.13\linewidth,valign=c]{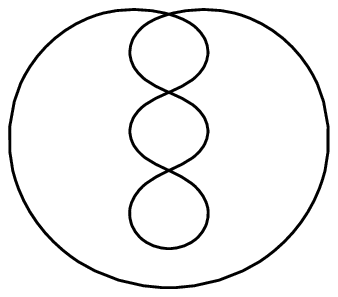}, 
\protect\includegraphics[width=.12\linewidth,valign=c]{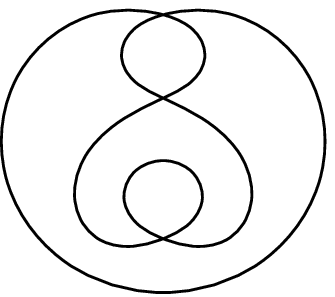}, 
\protect\includegraphics[width=.12\linewidth,valign=c]{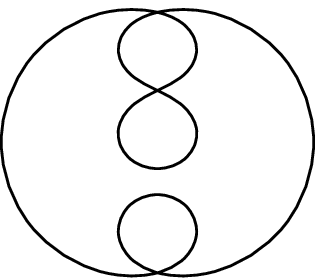},
\protect\includegraphics[width=.12\linewidth,valign=c]{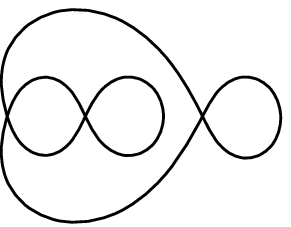}, 
\protect\includegraphics[width=.12\linewidth,valign=c]{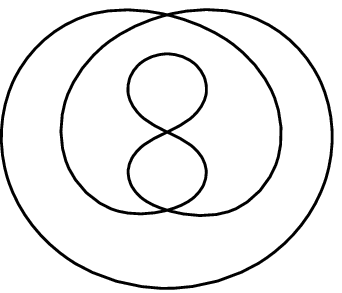}, 
\protect\includegraphics[width=.15\linewidth,valign=c]{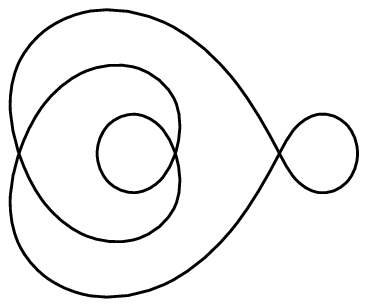}, 
\protect\includegraphics[width=.12\linewidth,valign=c]{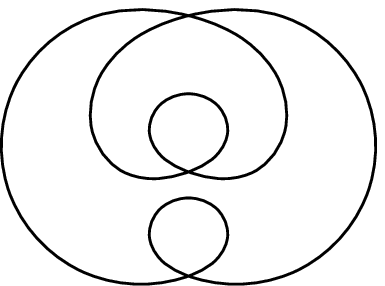}, 
\protect\includegraphics[width=.12\linewidth,valign=c]{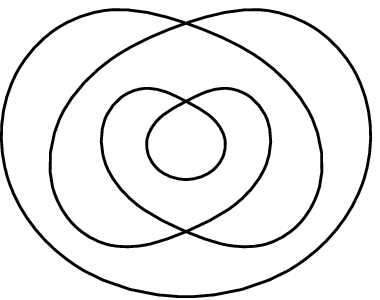}.
\\
\midrule
$ \mathcal{S}_{3,2} $ (\textit{$ 3 $-ears})&  
\protect\includegraphics[width=.100\linewidth,valign=c]{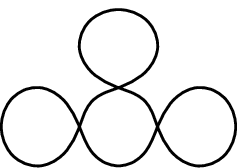}, 
\protect\includegraphics[width=.150\linewidth,valign=c]{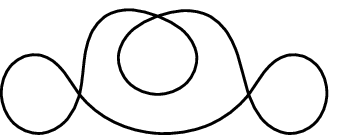}, 
\protect\includegraphics[width=.15\linewidth,valign=c]{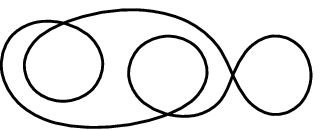}, 
\protect\includegraphics[width=.125\linewidth,valign=c]{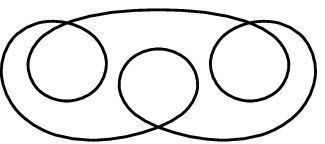}, 
\protect\includegraphics[width=.125\linewidth,valign=c]{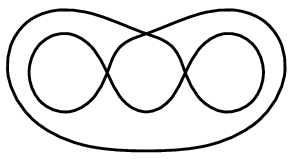}
\protect\includegraphics[width=.13\linewidth,valign=c]{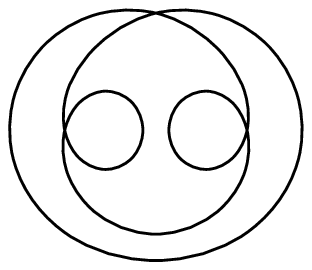}, \\
&
\protect\includegraphics[width=.125\linewidth,valign=c]{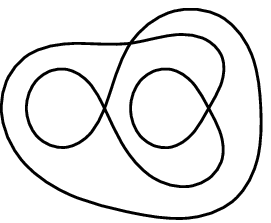}, 
\\
\midrule
$ \mathcal{S}_{3,3} $ (\textit{$ 1 $-twist links})& 
\protect\includegraphics[width=.10\linewidth,valign=c]{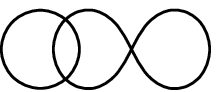},
\protect\includegraphics[width=.1\linewidth,valign=c]{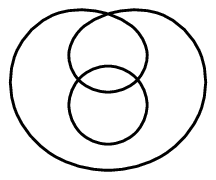},
\protect\includegraphics[width=.1\linewidth,valign=c]{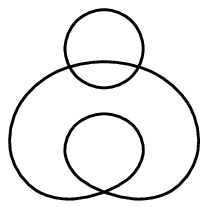},
\protect\includegraphics[width=.09\linewidth,valign=c]{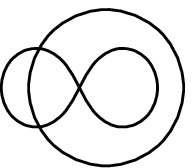},
\protect\includegraphics[width=.1\linewidth,valign=c]{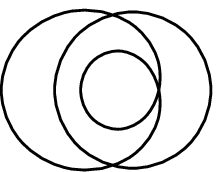}.\\
\midrule
$ \mathcal{S}_{3,4} $ (\textit{trefoils})&   
\protect\includegraphics[width=.1\linewidth,valign=c]{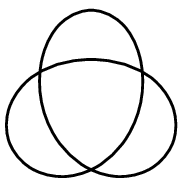},
\protect\includegraphics[width=.1\linewidth,valign=c]{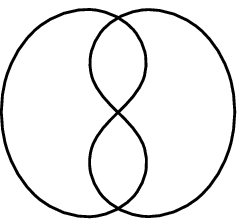}.\\
\bottomrule
\end{tabularx}
\caption{Knot shadow with at most $ 3 $ crossings}
\label{Tab:KnotShadows}
\end{table}

\begin{definition}
The \textit{connected sum} of two  knots $ K $ and $ K'  $, denoted by  $ K \#K'$, is the knot obtained by removing a small arc from each knot and then connecting the four endpoints by two new arcs in such a way that no new crossings are introduced \cite{Johnson}.

Analogously, the \textit{disconnected sum} or  \textit{disjoint union} of two  knots $ K $ and $ K'  $, denoted by  $ K \sqcup K'$, is the diagram obtained by placing the two diagrams  inside two non-intersecting domains on the sphere \cite[p.\ 15]{Manturov}. 
\end{definition}

\begin{example}
Consider the following connected sum:
\begin{figure}[H]
\centering
\includegraphics[width=.5\linewidth]{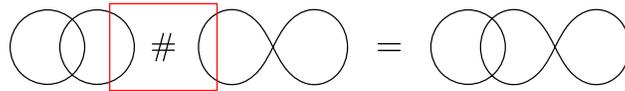}
\caption{The connected sum of a \textit{$ 1 $-link} and a \textit{$ 1 $-twist loop} gives a \textit{$ 1 $-twist link}.}
\label{Fig:ConnectedSumDef}
\end{figure}
\end{example}

We point out that the connected sum $ \# $ and the disconnected sum $ \sqcup $ are both associative and commutative \cite[p.\ 61]{Weiping}. Moreover, for any knot $ K $, we have $ K\#U=K $, where $ U $ denote the unknot.

Our framework will make extensive use of the following special notation:
\begin{notation} Let $ K $ be a knot,  and let $ n $ be a nonnegative integer.
\begin{enumerate}
\item $ K_n:=\underbrace{K\#K\#\cdots\#K}_{n\ copies} $ with $ K_0=U $. We say that the knot $ K_n $ is \textit{generated} by $ K $, and the knot $ K $ is the \textit{generator} of $ K_n $.\label{item:connexion}
\item $ K^n:=\underbrace{K\sqcup K\sqcup\cdots\sqcup K}_{n\ copies} $ with $ K^0=\varnothing $ (the empty knot).
\end{enumerate}
\end{notation}

In this paper, we establish the generating polynomials of the knots that are generated by the elementary knots. To begin with, we  pay a special attention to the following series of knots.

\begin{definition}[Twist loop]
A \textit{twist loop} is a knot obtained by twisting the unknot. We refer to a twist loop of $ n $ half twists as \textit{$ n $-twist loop} \cite{Ramaharo,RR}.  We  let $ T_n $ denote an $ n $-twist loop, with 
\begin{equation}
T_n:=\protect\includegraphics[width=0.345\linewidth,valign=c]{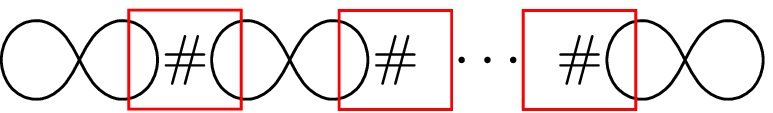}=\protect\includegraphics[width=0.2\linewidth,valign=c]{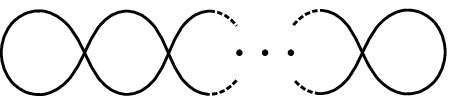}.
\end{equation}
\end{definition}

\begin{definition}[Link]
An \textit{$ n $-link} is a knot which consists of  $ n+1 $ linear interlocking circles.  We let $ L_n $ denote an $ n $-link, with 
\begin{equation}
L_n:=\protect\includegraphics[width=0.345\linewidth,valign=c]{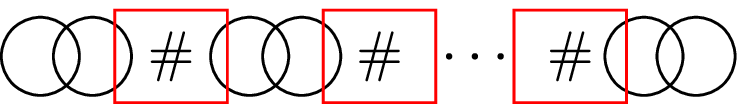}=\protect\includegraphics[width=0.16\linewidth,valign=c]{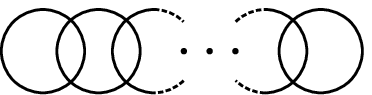}.
\end{equation}
The knot $ L_1 $ is also referred to as \textit{Hopf link}.
\end{definition}

\begin{definition}[Twist link]
We construct an \textit{$ n $-twist link}  is  by interlocking $ n $ series of $ 1 $-twist loops, starting from the unknot. We let $ W_n $ denote an $ n $-twist link, with 
\begin{equation}
W_n :=\protect\includegraphics[width=0.45\linewidth,valign=c]{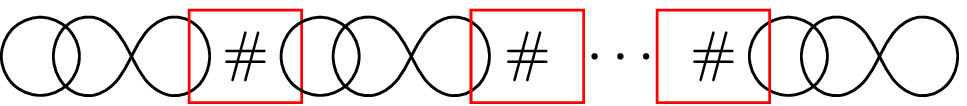}=\protect\includegraphics[width=0.28\linewidth,valign=c]{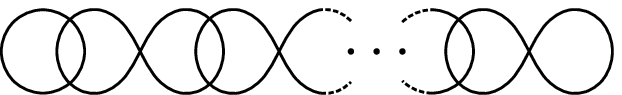},
\end{equation}  where the  generator is obtained by twisting the Hopf link.
\end{definition}

\begin{definition}[Hitch knot]
Ashley \cite[\#50, p.\ 14]{ABOK} describes the \textit{half hitch} as ``tied with one end of a rope being passed around an object and secured to its own standing part with a single hitch'', see  \Figs{Fig:singledoublehitch}.

\begin{figure}[H]
\centering
\includegraphics[width=.6\linewidth]{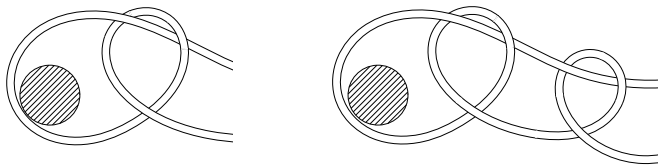}
\caption{Single half hitch and double half hitches}
\label{Fig:singledoublehitch}
\end{figure}

We define a  \textit{$ n $-hitch knot}, denoted by $ H_n $, as the shadow obtained by joining together the two loose ends of a thread of $ n $ half hitches. The corresponding connected sum is given by  
\begin{equation}
H_n:=\protect\includegraphics[width=0.345\linewidth,valign=b]{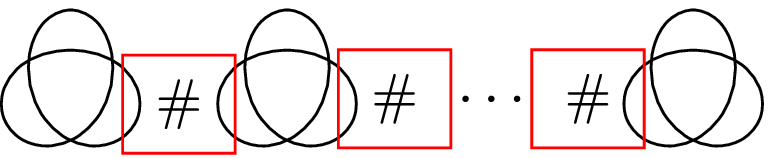}=\protect\includegraphics[width=0.21\linewidth,valign=b]{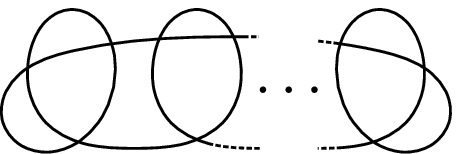}.
\end{equation} The knot $ H_1 $ is known as  \textit{trefoil}.
\end{definition}

\begin{definition}[Overhand knot] The \textit{overhand knot} is a knot obtained by making a loop in a piece of cord and pulling the end through it. For instance, we see in \Figs{fig:overhandoverhand} a single and a two series of overhand knot.
\begin{figure}[H]
\centering
\includegraphics[width=.4\linewidth]{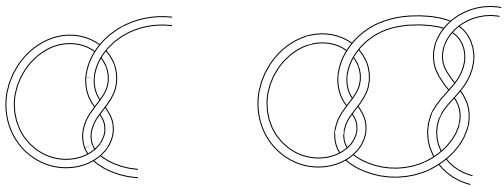}
\caption{Single overhand knot, double overhand knot (\textit{square knot}).}
\label{fig:overhandoverhand}
\end{figure}
If as previously we join together the two loose ends of a $ n $ series of overhand knot, then we call the projected shadow an \textit{$ n $-overhand knot}, and we shall refer to such knot as $ O_n $.  We have  
\begin{equation}
O_n:=\protect\includegraphics[width=0.345\linewidth,valign=c]{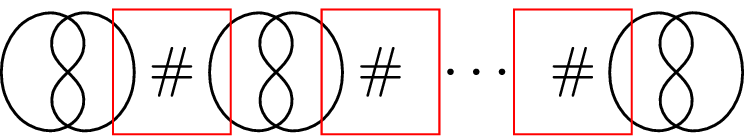}=\protect\includegraphics[width=0.18\linewidth,valign=c]{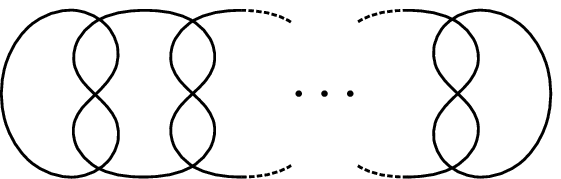}.
\end{equation} 
In the present representation, we also  refer to the knot $ O_1 $ as \textit{trefoil}.
\end{definition}

Besides, we  define as well the \textit{closure} or the \textit{closed connected sum} of a knot as the connected sum with itself as shown in \Figs{Fig:ConnectedSumClosure}. The closure of the unknot, which is a disjoint union of two closed loops, is therefore the simplest of all the closure of knots. We let $ \overline{K} $ denote a closure of the knot $ K $.
\begin{figure}[ht]
\centering
\hspace*{\fill}
\subfigure[$\overline{K}  $]{\includegraphics[width=0.175\linewidth]{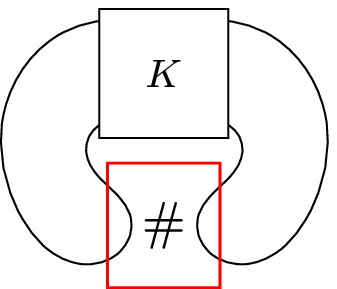}}\hfill%
\subfigure[$\overline{K_n}  $]{\includegraphics[width=0.25\linewidth]{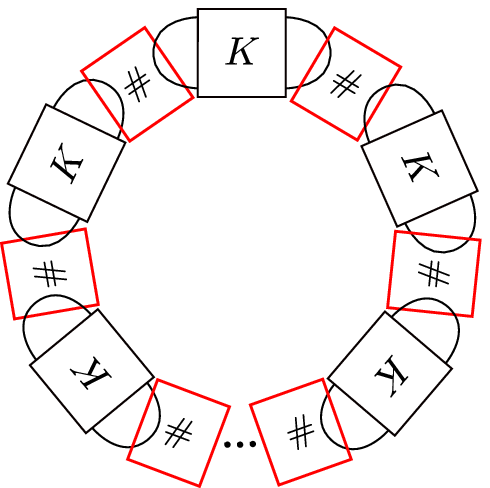}}\hfill%
\subfigure[The unknot $ U $ and its closure $ \overline{U} $]{\includegraphics[width=0.325\linewidth]{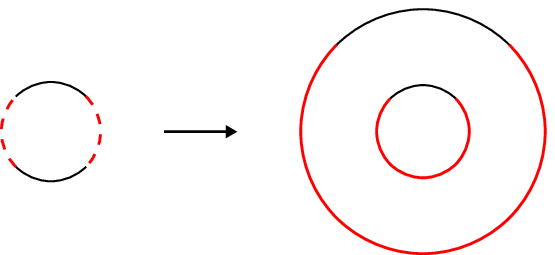}}
\hspace*{\fill}
\caption{The closed connected sum.}
\label{Fig:ConnectedSumClosure}
\end{figure}

Let us then present the \textit{$ n $-foil knot}, the \textit{$ n $-chain link}, the \textit{$ n $-twisted bracelet}, the \textit{$ n $-ringbolt hitching}, and the \textit{$ n $-sinnet of square knotting} which are respectively the closure of the $ n $-twist loop, the $ n $-link, the $ n $-twist link, the $ n $-hitch knot and the $ n $-overhand knot. In what follows, we give the formal definition of these  knots, and give the corresponding shadow diagrams.

\begin{definition}[Foil knot]
An \textit{$ n $-foil} \cite{RR} is a knot obtained by winding $ n $ times around a circle in the interior of the torus, and $ 2 $ times around its axis of rotational symmetry \cite[p.\ 107]{Adams}. 
\begin{figure}[H]
\centering
\includegraphics[width=.725\linewidth]{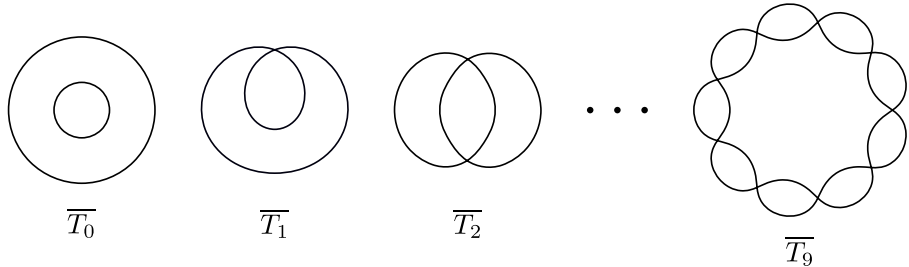}
\caption{$ n$-foil knots,  $ n=0,1,2,9$}
\label{fig:twistloopclosed}
\end{figure}
\end{definition}

\begin{definition}[Chain link]
An \textit{$ n $-chain link} consists of $ n $ unknotted circles embedded in $ S^3 $, linked together in a closed chain \cite{KPR}. 
\begin{figure}[H]
\centering
\includegraphics[width=.725\linewidth]{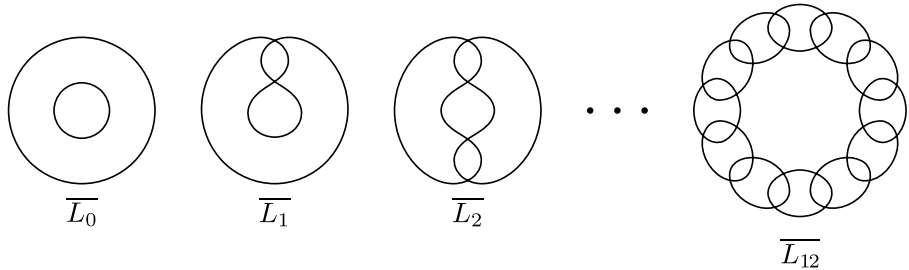}
\caption{$ n$-chain links,  $ n=0,1,2,12$.}
\end{figure}
\end{definition}

\begin{definition}[Twist bracelet]
An \textit{$ n $-twist bracelet}  (or a \textit{twisted $ n$-chain link} \cite{KPR}) consists of $ n $ twisted link intertwined together in a closed chain \cite{QM}.
\begin{figure}[H]
\centering
\includegraphics[width=.725\linewidth]{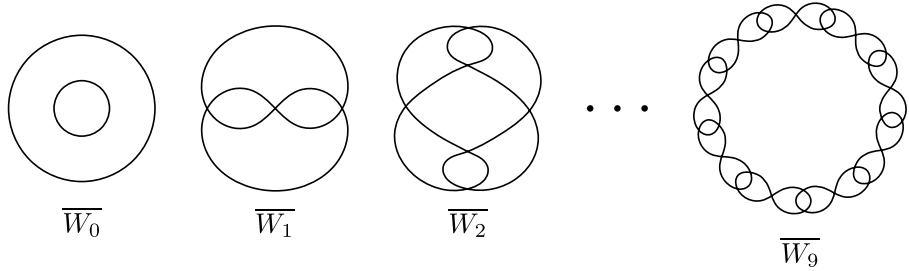}
\caption{$ n$-twist bracelets,  $ n=0,1,2,12$.}
\end{figure}
\end{definition}

\begin{definition}[Ringbolt hitching] By \textit{$ n $-ringbolt hitching}, we mean a series of $  n $ half hitches  that form a ridge around a ring or loop.
\begin{figure}[H]
\centering
\includegraphics[width=.725\linewidth]{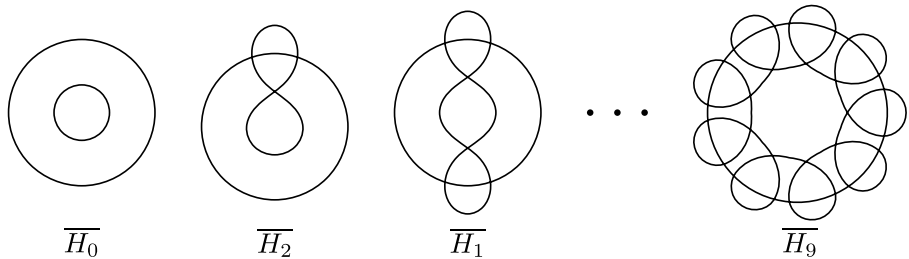}
\caption{$ n$-twisted bracelet,  $ n=0,1,2,9 $.}
\end{figure}
\end{definition}

\begin{definition}[Sinnet of square knotting] Ashley \cite[\#2906, p.\ 471]{ABOK} defines a \textit{chain sinnet}  as a knot which are made of one or more strands that are formed into successive loops, which are tucked though each other. Here, we borrow the term \textit{$ n $-sinnet of square knotting} to describe  a closed chain of $ n $ overhand knot, see \Figs{Fig:SinnetOfSquare}.
\begin{figure}[H]
\centering
\includegraphics[width=.725\linewidth]{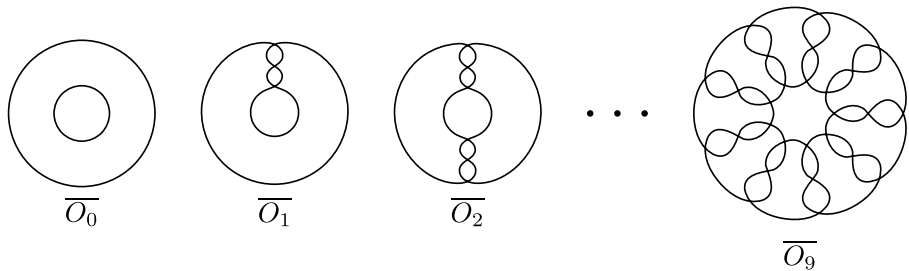}
\caption{$ n$-sinnet of square knotting,  $ n=0,1,2,9 $.}
\label{Fig:SinnetOfSquare}
\end{figure}
\end{definition}

\begin{remark}
The  pairs of knots shown in \Figs{Fig:EquivalentKnot} are equivalent under the $ 0S^2 $ move. The  $ 0S^2 $ move does not remove nor create a crossing, therefore a complete split leads to the same state diagram. We then expect that knots belonging to same equivalence class  have equals generating polynomials.
\begin{figure}[ht]
\centering
\hspace*{\fill}
\subfigure[$  \overline{T_1}= T_1  $]{\includegraphics[width=0.25\linewidth]{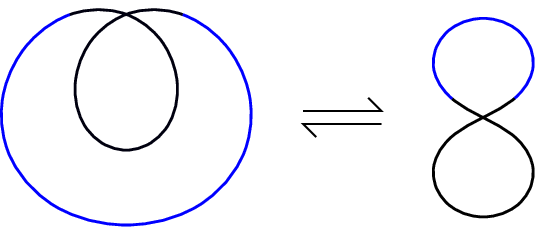}}\hfill%
\subfigure[$ \overline{L_1}= T_2$]{\includegraphics[width=0.25\linewidth]{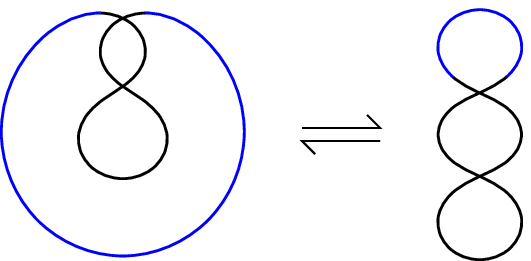}}\hfill%
\subfigure[$ \overline{O_1}= T_3 $]{\includegraphics[width=0.25\linewidth]{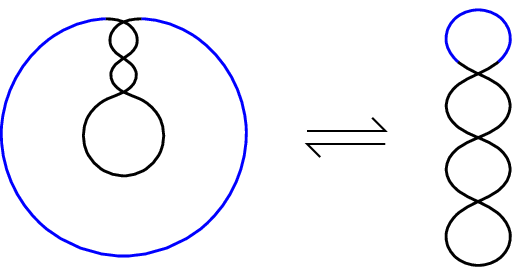}}
\hspace*{\fill}
\\
\hspace*{\fill}
\subfigure[$ \overline{H_1}= W_1 $]{\includegraphics[width=0.25\linewidth]{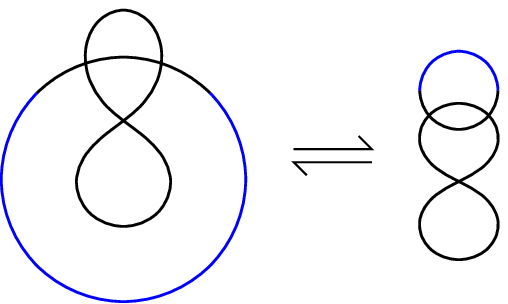}}\hfill%
\subfigure[$ \overline{L_2}= \overline{T_4}$]{\includegraphics[width=0.3\linewidth]{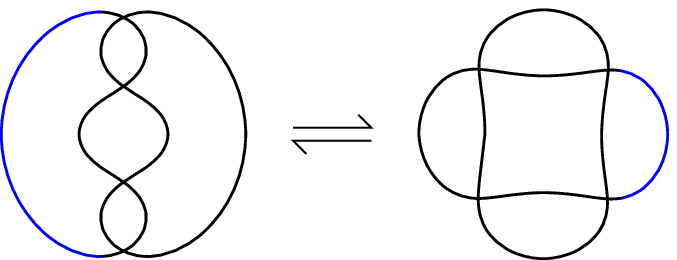}}
\hspace*{\fill}
\\
\hspace*{\fill}
\subfigure[$ \overline{O_2}= \overline{T_6}$]{\includegraphics[width=0.3\linewidth]{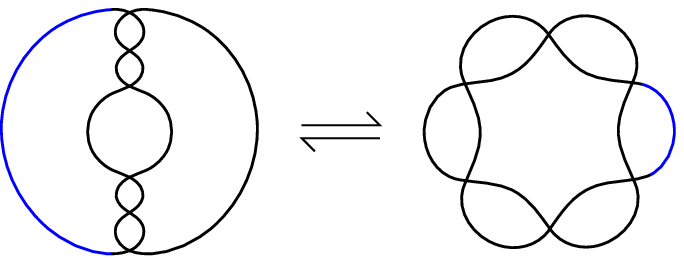}}\hfill%
\subfigure[$ \overline{W_1}= O_1= H_1$]{\includegraphics[width=0.35\linewidth]{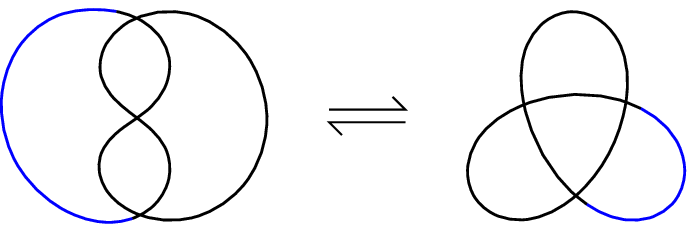}}
\hspace*{\fill}
\caption{Equivalent knots under the $ 0S^2 $ move.}
\label{Fig:EquivalentKnot}
\end{figure}
\end{remark}

\section{The generating polynomial for the connected sum}\label{Sec:gp1}
The present section is devoted to computing the generating polynomials of the previously introduced knots. 
\subsection{Preliminaries}
Regarding the knot operations $ \# $ and $ \sqcup $, we have the following immediate results:
\begin{align*}
U(x)=x\ \mbox{and}\ U^n(x)=x^n.
\end{align*}
A first extension is as follows.
\begin{proposition}\label{Prop:sqcupU}
For an arbitrary knot $ K $ and the unknot $U$, the following property holds
\begin{equation}
(K\sqcup U)(x)=xK(x).
\end{equation}
\end{proposition}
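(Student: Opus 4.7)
The plan is to unpack the state-sum definition \eqref{eq:statesum} and exploit the fact that the unknot $U$ carries no crossings. Because $U$ has no crossings, the simplified splitting rule \eqref{eq:rules} is never applied at $U$, so $U$ possesses exactly one state, namely itself, consisting of a single circle. By the definition of $\sqcup$, the knot $K \sqcup U$ is drawn by placing $K$ and $U$ in two disjoint domains of $S^2$; consequently the crossing set of $K \sqcup U$ coincides with the crossing set of $K$.

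From this I would establish a bijection between the states of $K$ and the states of $K \sqcup U$. A state $S'$ of $K \sqcup U$ is obtained by independently choosing a type $0$ or type $1$ split at each crossing of $K$ (there are no crossings coming from $U$), which is the same data as a state $S$ of $K$. The resulting collection of circles in $S'$ consists of the circles of $S$, living inside the domain of $K$, together with the single unaltered circle contributed by $U$, living in its own disjoint domain. Since these circles remain non-intersecting, we have $|S'| = |S| + 1$.

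Substituting into the state-sum formula, I would write
\begin{equation*}
(K \sqcup U)(x) \;=\; \sum_{S'} x^{|S'|} \;=\; \sum_{S} x^{|S|+1} \;=\; x \sum_{S} x^{|S|} \;=\; x K(x),
\end{equation*}
which is the claimed identity. There is no real obstacle here; the only point deserving explicit mention is the disjointness of the two domains on $S^2$, which guarantees that no choice of splits on $K$ can merge the unknot circle with any circle of $S$.
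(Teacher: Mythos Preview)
Your argument is correct and follows essentially the same route as the paper: both observe that since $U$ contributes no crossings, each state of $K\sqcup U$ is a state $S$ of $K$ together with the extra unknot circle, so the exponent in the state sum increases by one and the factor $x$ pulls out. Your version is simply more explicit about the bijection and the role of disjointness.
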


\begin{proof}
The initial diagram is accompanied with the unknot and so are each resulting diagram after a series of splits. So if $ K(x)=\sum_{S}^{} x^{|S|} $, then $ (K\sqcup U)(x)=\sum_{S}^{} x^{|S|+1}=x\sum_{S}^{} x^{|S|} $.
\end{proof}

\begin{corollary}\label{Cor:sqcupUn}
Let $ n $ be a nonnegative integer, and let $ K $ be an arbitrary knot. Then
\begin{equation*}
\left(K\sqcup U^n\right)(x)=x^nK(x).
\end{equation*}
\end{corollary}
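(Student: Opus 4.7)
The plan is to derive this corollary as a straightforward induction on $n$, using \Prop{Prop:sqcupU} as the inductive step and the associativity of the disjoint union $\sqcup$ (noted in the text as following from \cite{Weiping}) to unfold $U^{n+1}$ as $U^n \sqcup U$.

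For the base case $n = 0$, I would invoke the convention $U^0 = \varnothing$ together with the fact that $K \sqcup \varnothing = K$, so both sides equal $K(x) = x^0 K(x)$. (Alternatively, one could start the induction at $n=1$, which is exactly the content of \Prop{Prop:sqcupU}.)

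For the inductive step, assuming $(K \sqcup U^n)(x) = x^n K(x)$, I would write
\begin{equation*}
(K \sqcup U^{n+1})(x) = \bigl((K \sqcup U^n) \sqcup U\bigr)(x) = x \, (K \sqcup U^n)(x) = x \cdot x^n K(x) = x^{n+1} K(x),
\end{equation*}
where the first equality uses associativity of $\sqcup$, the second applies \Prop{Prop:sqcupU} with the knot $K \sqcup U^n$ in place of $K$, and the third invokes the inductive hypothesis.

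There is essentially no obstacle here; the only subtlety worth flagging is that \Prop{Prop:sqcupU} is stated for an \emph{arbitrary} knot, which is exactly what allows us to apply it to the composite knot $K \sqcup U^n$ at each stage of the induction. Everything else is bookkeeping.
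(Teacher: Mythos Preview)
Your proof is correct and is exactly the argument the paper has in mind: the corollary is stated without proof, as an immediate consequence of \Prop{Prop:sqcupU}, and induction on $n$ using associativity of $\sqcup$ is the obvious way to unpack that. There is nothing to add.
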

We can generalize \Prop{Prop:sqcupU} as follows
\begin{proposition}
For two arbitrary knots $ K $ and $ K' $, the following equality holds
\begin{equation}\label{eq:sqcup}
(K\sqcup K')(x)=K(x).K'(x).
\end{equation}
\end{proposition}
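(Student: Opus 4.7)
The plan is to exploit the disjointness of the two diagrams on the sphere to set up a bijection between states of $K \sqcup K'$ and pairs of states $(S, S')$, where $S$ is a state of $K$ and $S'$ is a state of $K'$. Since $K$ and $K'$ sit inside two non-intersecting domains, the crossings of $K \sqcup K'$ partition as the disjoint union of the crossings of $K$ and the crossings of $K'$; no crossing of the combined diagram involves strands from both components.

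Given this partition of crossings, any state of $K \sqcup K'$ is obtained by choosing, independently for each crossing, a type $0$ or type $1$ split as in \eqref{eq:rules}. Restricting these choices to the crossings lying inside the domain of $K$ (respectively $K'$) yields a state $S$ of $K$ (respectively $S'$ of $K'$), and this restriction map is clearly a bijection between states of $K \sqcup K'$ and pairs $(S,S')$. Because the two domains remain non-intersecting after splitting, the resulting circles in the combined state are exactly the union of the circles of $S$ and the circles of $S'$, with no new intersections or mergers; hence $|S''| = |S| + |S'|$ whenever $S''$ corresponds to $(S,S')$.

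With these two observations the identity follows from a single rearrangement of sums:
\begin{equation*}
(K\sqcup K')(x) = \sum_{S''} x^{|S''|} = \sum_{S}\sum_{S'} x^{|S|+|S'|} = \left(\sum_S x^{|S|}\right)\left(\sum_{S'} x^{|S'|}\right) = K(x)\,K'(x).
\end{equation*}
Note that Proposition~\ref{Prop:sqcupU} is recovered as the special case $K' = U$, since $U(x) = x$.

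The argument has no real obstacle; the only point requiring a moment of care is the claim that circles in $S''$ are in bijection with circles in $S \sqcup S'$. This uses crucially that the two diagrams occupy disjoint spherical domains, so splitting operations performed inside one domain cannot alter the connectivity of curves inside the other. Once that is accepted, the factorization is purely formal.
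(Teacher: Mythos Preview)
Your argument is correct and captures the same underlying idea as the paper's proof: states of $K\sqcup K'$ correspond bijectively to pairs of states, and circle counts add because the two diagrams sit in disjoint domains. The only difference is organizational. The paper first splits all crossings of $K$, so that each state of $K$ leaves behind a diagram of the form $U^{k_i}\sqcup K'$, and then invokes the earlier \Cor{Cor:sqcupUn} to evaluate $\big(U^{k_i}\sqcup K'\big)(x)=x^{k_i}K'(x)$; summing over $i$ recovers $K(x)K'(x)$. You instead split all crossings at once and appeal directly to the product structure of the double sum, bypassing the need for \Cor{Cor:sqcupUn}. Your route is marginally more self-contained, while the paper's makes explicit use of the incremental build-up it has just established; neither gains any real generality or simplicity over the other.
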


\begin{proof}
Let us first compute the states of $ K $. Each of these states is accompanied with the diagram of $ K' $. Hence we have
\begin{equation*}
(K\sqcup K')(x)=\sum_{i\geq1}^{}\left(U^{k_i}\sqcup K'\right)(x),
\end{equation*}
where $ i $ runs over all the states of  $ K' $, and $ k_i $ is the corresponding number of circles. Finally by  \Cor{Cor:sqcupUn} we obtain
\begin{equation*}
(K\sqcup K')(x)=\sum_{i\geq1}^{}x^{k_i}K'(x),
\end{equation*}
and we conclude by noting that $ K(x)=\sum_{i\geq1}^{}x^{k_i}$.
\end{proof}

\begin{proposition}\label{Prop:connectedsum}
For two arbitrary knots $ K $ and $ K' $, the following equality holds
\begin{equation}\label{eq:connectedsum}
\left(K\# K'\right)(x)=x^{-1}K(x).K'(x).
\end{equation}
\end{proposition}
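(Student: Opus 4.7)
The plan is to reduce this to the disjoint-union formula \eqref{eq:sqcup} by comparing, state by state, the diagram $K\#K'$ to the diagram $K\sqcup K'$. Intuitively, the connected sum is just the disjoint union with two short arcs cutting open and re-connecting the two components, and these new arcs introduce no new crossings. So the splittings of $K\#K'$ are in natural bijection with pairs of splittings $(S,S')$ of $K$ and $K'$, and only the number of circles in each state differs between the two constructions.

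First I would fix the bijection: since $K\#K'$ has exactly the crossings of $K$ together with the crossings of $K'$, a state of $K\#K'$ is determined by a choice of split type at every crossing of $K$ and of $K'$, i.e.\ by a pair $(S,S')$ where $S$ is a state of $K$ and $S'$ is a state of $K'$. Then I would compare the circle counts. In the disjoint union, the collection of circles in the state indexed by $(S,S')$ is just the disjoint union of the circles of $S$ and of $S'$, so it has $|S|+|S'|$ circles. To pass from $K\sqcup K'$ to $K\#K'$, one removes a small arc from each component and reconnects the four endpoints by two non-crossing arcs. In the state $(S,S')$, the removed arc on the $K$-side lies on exactly one circle $C\in S$, and the removed arc on the $K'$-side lies on exactly one circle $C'\in S'$; the two reconnecting arcs then merge $C$ and $C'$ into a single circle while leaving every other circle untouched. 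Because $C$ and $C'$ live in the separated components of $K\sqcup K'$, they are genuinely distinct, so the merge decreases the total number of circles by exactly one.

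Putting these together, the state $(S,S')$ of $K\#K'$ contributes $x^{|S|+|S'|-1}$, so
\begin{equation*}
(K\#K')(x)=\sum_{S,S'} x^{|S|+|S'|-1}=x^{-1}\left(\sum_{S} x^{|S|}\right)\left(\sum_{S'} x^{|S'|}\right)=x^{-1}(K\sqcup K')(x),
\end{equation*}
and then \eqref{eq:sqcup} gives $(K\#K')(x)=x^{-1}K(x)K'(x)$.

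The only non-trivial step is the merge argument, i.e.\ checking that in every state exactly one pair of circles is fused (not zero and not more than one). This is really a statement about arcs in a plane curve with its crossings resolved: each arc lies on a unique circle of the state, and the two re-gluing arcs in the connected-sum construction connect endpoints lying on circles from opposite components of $K\sqcup K'$, so the fused pair is always well defined and always reduces the count by $1$. Once this is in place, the rest is just factoring a double sum.
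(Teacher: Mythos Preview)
Your argument is correct and is essentially the same as the paper's: both identify states of $K\#K'$ with pairs of states of $K$ and $K'$ and observe that exactly one circle from each side is fused by the connecting arcs, so the circle count drops by one compared with $K\sqcup K'$. The paper phrases this by resolving $K$ first and invoking \Cor{Cor:sqcupUn}, while you resolve both factors at once and sum directly, but the substance is identical.
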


\begin{proof}
If we first compute the states of the knot $ K' $, then for each of these states, there exists exactly one circle which is connected to $ K' $. In terms of polynomial, it means
\begin{equation*}
(K\sqcup K')(x)=\sum_{i\geq1}^{}x^{-1}\left(U^{k_i}\sqcup K'\right)(x),
\end{equation*}
where, as previously, $ i $ runs over all the states of  $ K $, and $ k_i $ is the corresponding number of circles. The result immediately follows from formula \eqref{eq:sqcup}.
\end{proof}

\begin{corollary}
Let $ K $, $ K' $ and $ K'' $ be three knots. Then
\begin{align}
(K\#K')(x)&=(K'\#K)(x),\label{eq:commut}\\ 
\big(\left(K\#K'\right)\#K''\big)(x)&=\big(K\#\left(K'\#K''\right)\big)(x).\label{eq:assoc}
\end{align}
\end{corollary}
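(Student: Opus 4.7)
The plan is to reduce both identities to the analogous ring-theoretic properties of multiplication in the Laurent polynomial ring $\mathbb{Z}[x,x^{-1}]$, by invoking \Prop{Prop:connectedsum} as a black box. Since that proposition expresses $(K\#K')(x)$ purely as the product $x^{-1}K(x)K'(x)$, commutativity and associativity of the connected sum at the level of generating polynomials inherit from commutativity and associativity of polynomial multiplication.

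For \eqref{eq:commut}, I would simply apply \Prop{Prop:connectedsum} to each side: the left-hand side equals $x^{-1}K(x)K'(x)$ while the right-hand side equals $x^{-1}K'(x)K(x)$, and these coincide because multiplication of (Laurent) polynomials is commutative.

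For \eqref{eq:assoc}, I would apply \Prop{Prop:connectedsum} twice on each side. On the left,
\begin{equation*}
\big((K\#K')\#K''\big)(x)=x^{-1}(K\#K')(x)\cdot K''(x)=x^{-2}K(x)K'(x)K''(x),
\end{equation*}
and on the right,
\begin{equation*}
\big(K\#(K'\#K'')\big)(x)=x^{-1}K(x)\cdot(K'\#K'')(x)=x^{-2}K(x)K'(x)K''(x),
\end{equation*}
so both sides agree by associativity of multiplication.

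There is no real obstacle here; this is a one-line consequence of \Prop{Prop:connectedsum}. The only substantive content beyond ring arithmetic is the implicit fact that the right-hand sides of \eqref{eq:commut} and \eqref{eq:assoc} are well defined, i.e., that the generating polynomial of a connected sum does not depend on which small arcs are removed nor on the order in which connected sums are performed at the diagram level; this is already guaranteed by the commutativity and associativity of $\#$ noted earlier in the paper and built into \Prop{Prop:connectedsum} itself.
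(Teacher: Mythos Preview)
Your proposal is correct and matches the paper's approach: the paper states this corollary without proof, precisely because it is an immediate consequence of \Prop{Prop:connectedsum} together with the commutativity and associativity of polynomial multiplication, exactly as you argue.
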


We will make extensive use of the following particular case
\begin{corollary}\label{cor:genpoly}
For an arbitrary knot $ K $ and a nonnegative integer $ n $, we have
\begin{equation}\label{eq:generatorp}
K_n(x) = x\big(x^{-1}K(x)\big)^n.
\end{equation}
\end{corollary}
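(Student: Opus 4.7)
The plan is a straightforward induction on $n$, with \Prop{Prop:connectedsum} doing all the real work and associativity \eqref{eq:assoc} ensuring that the iterated connected sum is unambiguously decomposable.

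For the base case $n=0$, I would use the notational convention $K_0 = U$ and compute $K_0(x) = U(x) = x$, while the right-hand side evaluates to $x\cdot\bigl(x^{-1}K(x)\bigr)^{0} = x\cdot 1 = x$. As a sanity check one can verify $n=1$ directly: $K_1 = K$ gives $K_1(x) = K(x)$, matching $x\cdot x^{-1}K(x) = K(x)$.

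For the inductive step, I would assume $K_n(x) = x\bigl(x^{-1}K(x)\bigr)^n$ and, using associativity, write $K_{n+1} = K_n \# K$. Applying \Prop{Prop:connectedsum} gives
\begin{equation*}
K_{n+1}(x) = x^{-1}\, K_n(x)\, K(x) = x^{-1}\cdot x\bigl(x^{-1}K(x)\bigr)^n \cdot K(x) = x\bigl(x^{-1}K(x)\bigr)^{n+1},
\end{equation*}
after elementary algebraic simplification, which closes the induction.

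There is essentially no obstacle here: the corollary is a direct consequence of the multiplicativity in \Prop{Prop:connectedsum}, and the only minor point to be careful about is verifying that the conventions $K_0 = U$ and $U(x) = x$ are compatible with the empty product on the right-hand side. Once that is confirmed, the induction is mechanical.
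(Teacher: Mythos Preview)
Your proof is correct and is exactly the argument the paper has in mind: the corollary is stated without proof immediately after \Prop{Prop:connectedsum} and the associativity/commutativity identities \eqref{eq:commut}--\eqref{eq:assoc}, so the intended justification is precisely the induction you wrote out, with \Prop{Prop:connectedsum} supplying the recursive step and the convention $K_0=U$, $U(x)=x$ handling the base case.
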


\begin{remark}\label{Rem:generator}
Formula \eqref{eq:generatorp} suggests that in order to compute the generating polynomial for the knot $ K_n $, we simply have to compute that of the generator $ K $. This  formula also means that the generating polynomials associated with knots generated by elementary knots that lie in the same equivalent class are exactly the same, regardless of the choice of the arcs at which the connected sum is performed.
\end{remark}

\begin{theorem}\label{Thm:gf}
The generating function for the sequence $ \big\{K_n(x)\big\}_{n\geq0} $ with respect to the occurrence of the generator $ K $ (marked by $ y $) and the state diagrams (marked by $ x $) is
\begin{equation*}
K(x;y):=\dfrac{x^2}{x-yK(x)}.
\end{equation*}
\end{theorem}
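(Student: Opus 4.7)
The plan is to recognize this as a straightforward geometric series summation built on top of \Cor{cor:genpoly}. By definition, the bivariate generating function sought is
\begin{equation*}
K(x;y) = \sum_{n\geq0} K_n(x)\,y^n,
\end{equation*}
where $y$ tracks the number of copies of the generator $K$ appearing in the connected sum $K_n = K\#K\#\cdots\#K$, and $x$ tracks circles in the states.

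First I would substitute the closed form for $K_n(x)$ provided by \Cor{cor:genpoly}, namely $K_n(x) = x\bigl(x^{-1}K(x)\bigr)^n$, into the defining sum. This yields
\begin{equation*}
K(x;y) = \sum_{n\geq0} x\bigl(x^{-1}K(x)\bigr)^n y^n = x\sum_{n\geq0}\bigl(yx^{-1}K(x)\bigr)^n.
\end{equation*}
The inner sum is a geometric series in the formal variable $y$ (treating $x$ and $K(x)$ as parameters), so it sums to $\bigl(1-yx^{-1}K(x)\bigr)^{-1}$. Multiplying numerator and denominator by $x$ to clear the $x^{-1}$ inside yields
\begin{equation*}
K(x;y) = \frac{x}{1-yx^{-1}K(x)} = \frac{x^2}{x-yK(x)},
\end{equation*}
which is the claimed expression.

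There is essentially no obstacle here: the entire content of the theorem is packaged in \Cor{cor:genpoly}, and what remains is a one-line geometric series manipulation. The only point worth remarking is that the identity $K_0(x) = U(x) = x$ matches the $n=0$ term of the geometric expansion (which evaluates to $x\cdot 1 = x$), so the formula is consistent at the base case and no separate verification is required.
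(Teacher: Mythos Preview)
Your proof is correct and follows exactly the same approach as the paper: define $K(x;y)=\sum_{n\geq 0}K_n(x)y^n$, substitute the closed form $K_n(x)=x\bigl(x^{-1}K(x)\bigr)^n$ from \Cor{cor:genpoly}, and sum the resulting geometric series. The paper's own proof is even terser (it simply cites formula~\eqref{eq:generatorp} and declares the result), but you have merely written out the implied one-line computation.
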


\begin{proof}
We write $ K(x;y):=\sum_{n\geq 0}^{}K_n(x)y^n $, and the result follows from formula \eqref{eq:generatorp}.
\end{proof}

The results in subsection  \ref{subsec:twist-loop} -- \ref{subsec:overhand-knot} are all then immediate application of \Cor{cor:genpoly} and \Thm{Thm:gf}. For each of the concerned knots, we give the generating polynomial and the associated generating function. If available, we also give the A-records and the short definition from the \textit{On-Line Encyclopedia of Integer Sequences} (OEIS) \cite{Sloane}.

\subsection{Twist loop}\label{subsec:twist-loop}
Let $ T_n(x) :=\sum_{k\geq 0}^{}t\left(n,k\right)x^k$ denote the generating polynomial for the $ n $-twist loop. 

\begin{theorem}
The generating polynomial for the $ n $-twist loop is given by the recurrence relation
\begin{equation}\label{eq:Tnx1}
T_n(x)=(x+1)T_{n-1}(x),
\end{equation}
and is expressed by the closed form formula
\begin{equation}\label{eq:Tnx2}
T_n(x)=x(x+1)^n.
\end{equation}
\end{theorem}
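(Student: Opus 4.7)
The plan is to reduce the theorem entirely to the computation of the generating polynomial of the generator $T_1$ and then invoke \Cor{cor:genpoly}. By the recursive definition of the twist loop, $T_n$ is precisely the connected sum of $n$ copies of $T_1$ (with $T_0 = U$), so the framework already developed in the preliminaries applies without modification.

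First, I would establish the base case $T_1(x) = x(1+x)$ by direct application of the split rule \eqref{eq:rules}. The diagram of the 1-twist loop has a single crossing; inspecting the two resulting diagrams, one split leaves a single unknotted circle (contributing $x$) while the other yields two disjoint circles (contributing $x^2$). Summing gives $T_1(x) = x + x^2 = x(1+x)$.

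Second, with $T_1(x)$ in hand, the closed form \eqref{eq:Tnx2} is an immediate specialization of \Cor{cor:genpoly}:
\[
T_n(x) = x\bigl(x^{-1}T_1(x)\bigr)^n = x\bigl(x^{-1}\cdot x(1+x)\bigr)^n = x(1+x)^n.
\]
The recurrence \eqref{eq:Tnx1} can then be read off directly from this closed form, or, more conceptually, obtained by writing $T_n = T_{n-1}\#T_1$ and applying \Prop{Prop:connectedsum}, which gives $T_n(x) = x^{-1}T_{n-1}(x)\cdot T_1(x) = (x+1)T_{n-1}(x)$.

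There is no genuine obstacle here: the only step that requires any attention is the visual verification of the base case, namely that the two splits of the unique crossing in $T_1$ yield one and two circles respectively. Once that inspection is done, the theorem is essentially an application of \Cor{cor:genpoly}, and one could just as easily write the whole proof as a single line following from \Rem{Rem:generator}.
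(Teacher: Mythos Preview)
Your proof is correct and matches the paper's own argument essentially line for line: compute $T_1(x)=x^2+x$ by inspecting the two splits of the single crossing (the paper does this via \Figs{Fig:1TwistLoop}), then invoke \Cor{cor:genpoly} to obtain the closed form, from which the recurrence is immediate. The only addition you make is the alternative derivation of the recurrence via \Prop{Prop:connectedsum}, which is a minor embellishment rather than a different approach.
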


\begin{proof}
See \Figs{Fig:1TwistLoop}, then apply \Cor{cor:genpoly}.
\begin{figure}[ht]
\centering
\includegraphics[width=.25\linewidth]{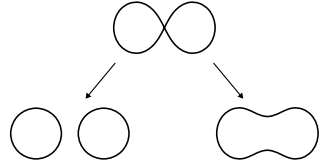}
\caption{The states of the $ 1 $-twist loop, $ 	T_1(x)=x^2+x $.}
\label{Fig:1TwistLoop}
\end{figure}
\end{proof}

\begin{corollary}
The generating function for the  sequence $ \big\{T_n(x)\big\}_{n\geq0} $ is given by
\begin{equation}
T(x;y):=\dfrac{x}{1-y(x+1)}.
\end{equation}
\end{corollary}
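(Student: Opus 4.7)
The plan is to invoke \Thm{Thm:gf} with the generator $K = T_1$. By \eqref{eq:Tnx2} applied at $n=1$, the generating polynomial of the generator is $T_1(x) = x(x+1)$. Substituting this directly into $K(x;y) = x^2/(x - yK(x))$ immediately yields
\[
T(x;y) \;=\; \frac{x^2}{x - y\,x(x+1)} \;=\; \frac{x^2}{x\bigl(1 - y(x+1)\bigr)} \;=\; \frac{x}{1 - y(x+1)},
\]
which is the claimed expression. The only substantive manipulation is cancelling the common factor of $x$ in the numerator and denominator.

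As a sanity check, one can instead verify the identity by summing the ordinary generating series directly. Inserting the closed form $T_n(x) = x(x+1)^n$ from \eqref{eq:Tnx2} into the definition $T(x;y) := \sum_{n \ge 0} T_n(x)\,y^n$ produces $x \sum_{n \ge 0} \bigl(y(x+1)\bigr)^n$, which is a geometric series in $y(x+1)$ and sums, as a formal power series in $y$ over $\mathbb{Z}[x]$, to $x/(1 - y(x+1))$. The two routes agree, as they must, since \Thm{Thm:gf} was itself derived from \Cor{cor:genpoly}. I foresee no real obstacle: the corollary is essentially a one-line consequence of \Thm{Thm:gf}, with the proof amounting to a substitution followed by a single algebraic simplification.
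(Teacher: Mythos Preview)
Your proof is correct and matches the paper's approach: the corollary is presented as a direct application of \Thm{Thm:gf}, and you carry out precisely that substitution with $K(x)=T_1(x)=x(x+1)$, followed by the obvious cancellation. The added geometric-series sanity check is a nice redundancy but not required.
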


Combining expressions \eqref{eq:Tnx1} and \eqref{eq:Tnx2}, we obtain the following recurrence relation:
\begin{equation}\label{eq:tnk}
\begin{cases}
t(n,0)=0,\ t(n,1)=1, & n\geq 0;\\
t(n,k)=t(n-1,k-1)+t(n-1,k),	&k\geq 1,\ n\geq 0.
\end{cases}
\end{equation}
The values for the array $ \left(t(n,k)\right)_{n\geq0,\  k\geq 0} $ are given in \Tabs{Tab:twistloop} for small value of $ n $ and $ k $, with $ n\geq k-1 $.
The result is a horizontal-shifted Pascal's triangle \cite[\seqnum{A007318}]{Sloane}
\begin{table}[ht]
\centering
$\begin{array}{c|rrrrrrrrrrrrrr}
n\ \backslash\ k	&0	&1	&2	&3	&4	&5	&6	&7	&8	&9	&10	&11\\
\midrule
0	&0	&1	&	&	&	&	&	&	&	&	&	&	\\
1	&0	&1	&1	&	&	&	&	&	&	&	&	&	\\
2	&0	&1	&2	&1	&	&	&	&	&	&	&	&	\\
3	&0	&1	&3	&3	&1	&	&	&	&	&	&	&\\
4	&0	&1	&4	&6	&4	&1	&	&	&	&	&	&\\
5	&0	&1	&5	&10	&10	&5	&1	&	&	&	&	&\\
6	&0	&1	&6	&15	&20	&15	&6	&1	&	&	&	&	\\
7	&0	&1	&7	&21	&35	&35	&21	&7	&1	&	&	&	\\
8	&0	&1	&8	&28	&56	&70	&56	&28	&8	&1	&	&	\\
9	&0	&1	&9	&36	&84	&126	&126	&84	&36	&9	&1	&	\\
10	&0	&1	&10	&45	&120	&210	&252	&210	&120	&45	&10	&1	\\
\end{array}$
\caption{Values of $ t(n,k) $ for  $ 0\leq n\leq 10 $ and $ 0\leq k\leq 11 $.}
\label{Tab:twistloop}
\end{table}

\begin{remark}\label{Rem:T2T3}
Let $ K\in\mathcal{S}_{2,2}\cup\mathcal{S}_{3,1}\cup\mathcal{S}_{3,2}  $. Then by \Rem{Rem:generator}, we have
\begin{equation}
K_n(x)=\begin{cases}
\left(T_2\right)_n(x) ,&\textit{if $ K\in\mathcal{S}_{2,2} $};\\
\left(T_3\right)_n(x),&\textit{if $ K\in\mathcal{S}_{3,1}\cup\mathcal{S}_{3,2} $},\\
\end{cases}
\end{equation}
with 
\begin{align*}
\left(T_{2}\right)_n:&=\protect\includegraphics[width=0.55\linewidth,valign=c]{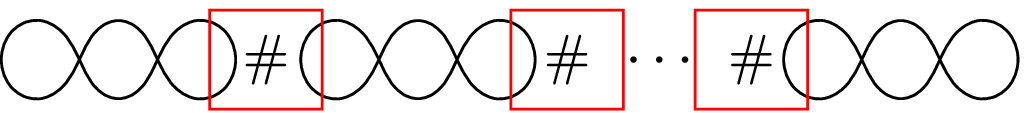};\\
\left(T_{3}\right)_n:&=\protect\includegraphics[width=0.7\linewidth,valign=c]{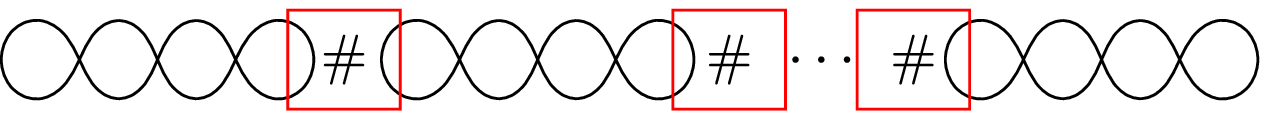}.
\end{align*}
Here, it is immediate that  for all nonnegative integer $ i $,  $ \left(T_i\right)_n=T_{in} $.  Let us  then introduce the following calculations to complete our results.
\paragraph{{\boldmath$2n $-twist loop}:}
let  $T_{2n}(x):=\sum_{k\geq 0}^{}t_2(n,k)x^k $.
\begin{enumerate}
\item Generating polynomial: \begin{equation}
T_{2n}(x)=x\left(x+2x+1\right)^{n}.
\end{equation}
\item Generating function: \begin{equation}
T_2(x;y)=\dfrac{x}{1-y(x^2+2x+1)}.
\end{equation}
\item Distribution of $ t_2(n,k) $: see \Tabs{Tab:T2nk}.
\begin{equation}
\begin{cases}
t_2(n,0)=0,\ t_2(n,1)=1,\ t_2(n,2)=2n, & n\geq 0;\\
t_2(n,k)=t_2(n-1,k-2)+2t_2(n-1,k-1)+t_2(n-1,k),	&k\geq 2,\ n\geq 0.
\end{cases}
\end{equation}
When $ k\geq 1 $, the triangle $ \left(t_2(n,k)\right)_{n\geq0}$ is  a horizontal-shifted even-numbered rows of Pascal's triangle \cite[\seqnum{A034870}]{Sloane}.
\begin{table}[ht]
\centering
$\begin{array}{c|rrrrrrrrrrrrrrrr}
n\ \backslash\ k		 &0		 &1		 &2		 &3		 &4		 &5		 &6		 &7		 &8		 &9		 &10		 &11	 &12	 &13	 &14	 &15\\
\midrule
0	 &0	 &1	 &	 &	 &	 &	 &	 &	 &	 &	 &	 &	 &	 &	 &	 &\\
1	 &0	 &1	 &2	 &1	 &	 &	 &	 &	 &	 &	 &	 &	 &	 &	 &	 &\\
2	 &0	 &1	 &4	 &6	 &4	 &1	 &	 &	 &	 &	 &	 &	 &	 &	 &	 &\\
3	 &0	 &1	 &6	 &15	 &20	 &15	 &6	 &1	 &	 &	 &	 &	 &	 &	 &	 &\\
4	 &0	 &1	 &8	 &28	 &56	 &70	 &56	 &28	 &8	 &1	 &	 &	 &	 &	 &	 &\\
5	 &0	 &1	 &10	 &45	 &120	 &210	 &252	 &210	 &120	 &45	 &10	 &1	 &	 &	 &	 &\\
6	 &0	 &1	 &12	 &66	 &220	 &495	 &792	 &924	 &792	 &495	 &220	 &66	 &12	 &1	 &	 &\\
7	 &0	 &1	 &14	 &91	 &364	 &1001	 &2002	 &3003	 &3432	 &3003	 &2002	 &1001	 &364	 &91	 &14	 &1
\end{array}$
\caption{Values of $ t_2(n,k) $ for $ 0\leq n\leq 7 $ and $ 0\leq k\leq 15 $.}
\label{Tab:T2nk}
\end{table}
\end{enumerate}

\paragraph{\boldmath$3n $-twist loop:} let $T_{3n}(x):=\sum_{k\geq 0}^{}t_3(n,k)x^k $. 
\begin{enumerate}
\item Generating polynomial:  \begin{equation}
T_{3n}(x)=x\left(x^3+3x^2+3x+1\right)^{n}.
\end{equation}
\item Generating function: \begin{equation}
T_3(x;y):=\dfrac{x}{1-y(x^3+3x^2+3x+1)}.
\end{equation}
\item Distribution of $ t_3(n,k) $: see \Tabs{Tab:T3nk} (also, refer back to \Tabs{Tab:twistloop}).
\begin{equation}
\begin{cases}
t_3(n,0)=0,\ t_3(n,1)=1,\ t_3(n,2)=3n,\ t_3(n,3)=\dfrac{3n(3n-1)}{2}, & n\geq 0;\\
t_3(n,k)=t_3(n-1,k-3)+3t_3(n-1,k-2)\\
\hphantom{t_3(n,k)=}+3t_3(n-1,k-1)+t_3(n-1,k),	&k\geq 3, n\geq 0.
\end{cases}
\end{equation}
The triangle $ \left(t_3(n,k)\right)_{k\geq 1,\ n\geq0}$ is  given by  $ \binom{3n}{k-1} $ \cite[$ \seqnum{A007318}(3n,k-1) $]{Sloane}.
\begin{table}[ht]
\centering
\resizebox{\linewidth}{!}{%
$\begin{array}{c|rrrrrrrrrrrrrrrrr}
n\ \backslash\ k	&0	&1	&2	&3	&4	&5	&6	&7	&8	&9	&10	&11	&12&13&14&15&16\\
\midrule
0	&0	&1	&	&	&	&	&	&	&	&	&	&	&&&&&\\
1	&0	&1	&3	&3	&1	&	&	&	&	&	&	&	&&&&&\\
2	&0	&1	&6	&15	&20	&15	&6	&1	&	&	&	&	&&&&&\\
3	&0	&1	&9	&36	&84	&126	&126	&84	&36	&9	&1	&	&&&&&\\
4	&0	&1	&12	&66	&220	&495	&792	&924	&792	&495	&220	&66	&12&1&&&\\
5&0&1&15&105&455&1365&3003&5005&6435&6435&5005&3003&1365&455&105&15&1
\end{array}$
}
\caption{Values of $ t_3(n,k) $ for  $ 0\leq n\leq 5 $ and $ 0\leq k\leq 16 $.}
\label{Tab:T3nk}
\end{table}
\end{enumerate}
\end{remark}
\subsection{Link}\label{subsec:links}
Let $ L_n(x) :=\sum_{k\geq 0}^{}\ell(n,k)x^k$ denote the generating polynomial for the $ n $-link. 

\begin{theorem}
The generating polynomial for the $ n $-link is given by the recurrence relation
\begin{equation}
L_n (x)=\left(2x+2\right)L_{n -1}(x),
\end{equation}
and is expressed by the closed form formula
\begin{equation}\label{Lnx2}
L_n(x)=x(2x+2)^n.
\end{equation}
\end{theorem}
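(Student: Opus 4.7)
The plan is to mirror exactly the strategy used just above for the $n$-twist loop. By construction, the $n$-link $L_n$ is the connected sum of $n$ copies of the Hopf link $L_1$, so $L_n = (L_1)_n$ in the sense of the notation introduced in item \ref{item:connexion}. By \Cor{cor:genpoly}, this reduces the whole problem to computing $L_1(x)$, the generating polynomial of the generator $L_1$.

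The core calculation, which I would display in a figure analogous to \Figs{Fig:1TwistLoop}, is the direct enumeration of the $2^2 = 4$ states of the Hopf link. Splitting its two crossings with the same type (both type $0$ or both type $1$) separates the two interlocking circles and yields a state with $2$ circles; splitting them with opposite types merges them into a state with a single circle. Collecting the four contributions gives
\begin{equation*}
L_1(x) = 2x + 2x^2 = x(2x+2).
\end{equation*}

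With this at hand, formula \eqref{eq:generatorp} of \Cor{cor:genpoly}, applied to $K=L_1$, yields
\begin{equation*}
L_n(x) = x\bigl(x^{-1}L_1(x)\bigr)^n = x(2x+2)^n,
\end{equation*}
which is the claimed closed form. The recurrence $L_n(x) = (2x+2)L_{n-1}(x)$ then falls out by comparing $L_n(x)$ with $L_{n-1}(x)$, or equivalently by a one-line induction on $n$ using the closed form.

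The only step requiring any real care is the pictorial enumeration of the states of the Hopf link, since an error there would propagate through every subsequent formula. Everything else is a mechanical application of results already established in \Subsec{subsec:twist-loop} and the preliminaries, so there is no substantive obstacle to overcome.
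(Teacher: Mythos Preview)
Your proposal is correct and follows exactly the paper's approach: compute $L_1(x)=2x^2+2x$ by enumerating the four states of the Hopf link (the paper does this via \Figs{Fig:1LinkStates}), then invoke \Cor{cor:genpoly} to obtain the closed form and hence the recurrence. The only caveat is that your verbal description of which split combinations yield one versus two circles is convention-dependent, so in the final write-up it is safer to simply display the four states as the paper does rather than assert the pairing in words.
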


\begin{proof}
\begin{figure}[ht]
\centering
\includegraphics[width=.4\linewidth]{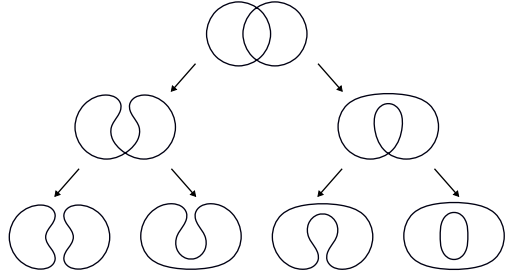}
\caption{The states of the $ 1 $-link, $ L_1(x)=2x^2+2x $.}
\label{Fig:1LinkStates}
\end{figure}
See \Figs{Fig:1LinkStates}.
\end{proof}

\begin{corollary}
The generating function for the  sequence $ \big\{L_n(x)\big\}_{n\geq0} $ is given by
\begin{equation}
L(x;y):=\dfrac{x}{1-y(2x+2)}.
\end{equation}
\end{corollary}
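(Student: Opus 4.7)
The plan is to follow the template already established by \Thm{Thm:gf} and apply it with generator $K=L_1$. From the previous theorem we have $L_1(x)=2x^2+2x=x(2x+2)$, and each $L_n$ is precisely the $n$-fold connected sum $(L_1)_n$ generated by $L_1$ (this is how the knots $L_n$ are defined in \Sec{Sec:background} via the diagram $L_n=\protect\includegraphics[width=0.16\linewidth,valign=c]{nLink1}$, which is visibly the iterated connected sum of Hopf links $L_1$). Thus \Thm{Thm:gf} applies verbatim and yields
\begin{equation*}
L(x;y)=\dfrac{x^2}{x-y L_1(x)}=\dfrac{x^2}{x-yx(2x+2)}=\dfrac{x}{1-y(2x+2)},
\end{equation*}
where the last step is simply cancellation of a factor of $x$ in numerator and denominator.

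If one prefers a self-contained derivation that does not route through \Thm{Thm:gf}, the alternative is a direct geometric-series computation: substitute the closed form $L_n(x)=x(2x+2)^n$ from \eqref{Lnx2} into the defining sum $L(x;y):=\sum_{n\geq 0} L_n(x)\,y^n$, factor out the constant-in-$n$ leading $x$, and recognize the remaining series $\sum_{n\geq 0}(y(2x+2))^n$ as $1/(1-y(2x+2))$ in the ring of formal power series in $y$ over $\mathbb{Z}[x]$.

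There is no serious obstacle here; the only thing to note is that the identity is an identity of formal power series in $y$, so no convergence condition on $y$ is needed, and the result is independent of whether one invokes \Thm{Thm:gf} directly or reproves the geometric-series step from \eqref{Lnx2}.
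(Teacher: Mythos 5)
Your proposal is correct and matches the paper's own route: the paper explicitly says the corollaries in subsections \ref{subsec:twist-loop}--\ref{subsec:overhand-knot} are immediate applications of \Thm{Thm:gf}, and your computation $L(x;y)=x^2/\big(x-yL_1(x)\big)=x/\big(1-y(2x+2)\big)$ is exactly that application with generator $L_1$. The alternative geometric-series derivation you sketch is equivalent and equally valid, since \Thm{Thm:gf} is itself proved by summing the closed form.
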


The expression of the polynomial $ L_n(x) $ suggests that 
\begin{equation*}
\ell(n,k)=2^nt(n,k)=2^n\binom{n}{k-1},\ k\geq 1, \ n\geq 0\ \mbox{\cite[\seqnum{A038208}]{Sloane}}.
\end{equation*}
Therefore, the corresponding polynomial coefficients satisfy the following recurrence relation:
\begin{equation}\label{eq:lnk}
\begin{cases}
\ell(n,0)=0,\ \ell(n,1)=2^n, 				& n\geq 0;\\
\ell(n,k)=2^n\big(t(n-1,k)+t(n-1,k-1)\big)\\
\hphantom{\ell(n,k)}=\ell(n-1,k)+\ell(n-1,k-1),	&k\geq 1,\ n\geq 0.
\end{cases}
\end{equation}
Therefore we have \Tabs{tab:link} giving the numbers $ \ell(n,k) $, for $ 0\leq n\leq 9 $ and $ 0\leq k\leq 10 $.
\begin{table}[ht]

\centering
$\begin{array}{c|rrrrrrrrrrr}
n\ \backslash\ k	&0	&1	&2	&3	&4	&5	&6	&7	&8	&9	&10\\
\midrule
0	&0	&1		&		&		&		&		&		&		&		&		&\\
1	&0	&2		&2		&		&		&		&		&		&		&		&\\
2	&0	&4		&8		&4		&		&		&		&		&		&		&\\
3	&0	&8		&24		&24		&8		&		&		&		&		&		&\\
4	&0	&16		&64		&96		&64		&16		&		&		&		&		&\\
5	&0	&32		&160	&320	&320	&160	&32		&		&		&		&\\
6	&0	&64		&384	&960	&1280	&960	&384	&64		&		&		&\\
7	&0	&128	&896	&2688	&4480	&4480	&2688	&896	&128	&		&\\
8	&0	&256	&2048	&7168	&14336	&17920	&14336	&7168	&2048	&256	&\\
9	&0	&512	&4608	&18432	&43008	&64512	&64512	&43008	&18432	&4608	&512\\
\end{array}$

\caption{Values of $ \ell(n,k) $ for $ 0\leq n\leq 9 $ and $ 0\leq k\leq 10 $.}
\label{tab:link}
\end{table}

\begin{remark}\label{rem:linktwist}
We have  $ L_n(x)=2^nT_n(x) $. We interpret the factor $ 2^n $ as follows: at each pair of crossings making up a ``link part'', if we split one  crossing in either type $ 0 $ or $ 1 $, then the resulting knot has equals generating with a twist loop. For example, we take $ n=7 $. Letting the splits series be identified by a $ 0$-$1 $'s string, we obtain some of the possible splits:
\begin{itemize}
\item $0000000 $: \protect\includegraphics[width=0.875\linewidth,valign=c]{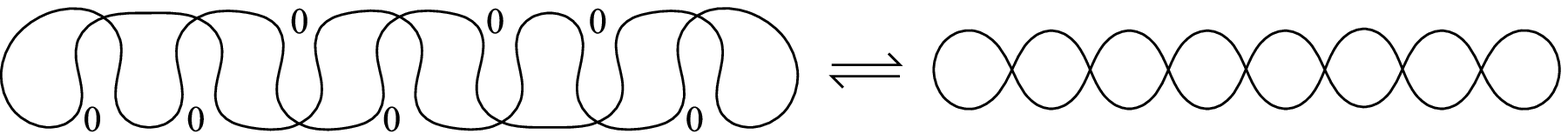};
\item $1111111 $: \protect\includegraphics[width=0.875\linewidth,valign=c]{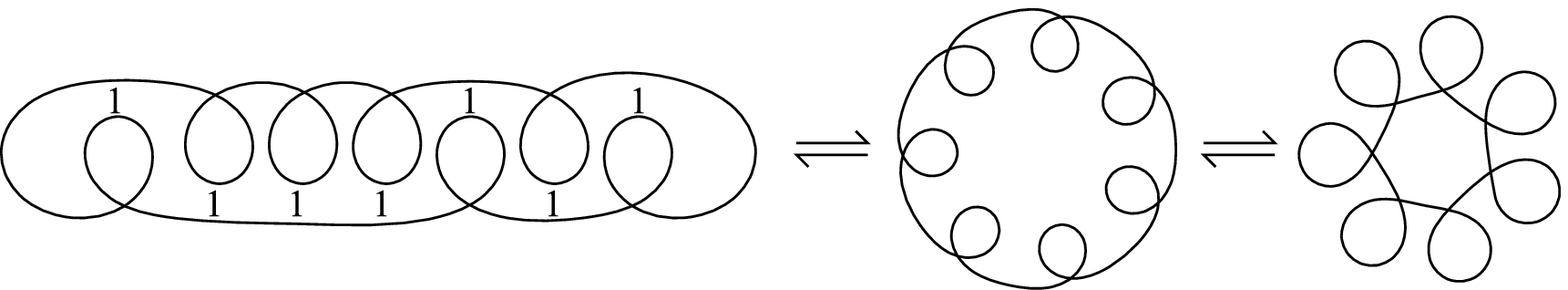};
\item $1000110 $: \protect\includegraphics[width=0.875\linewidth,valign=c]{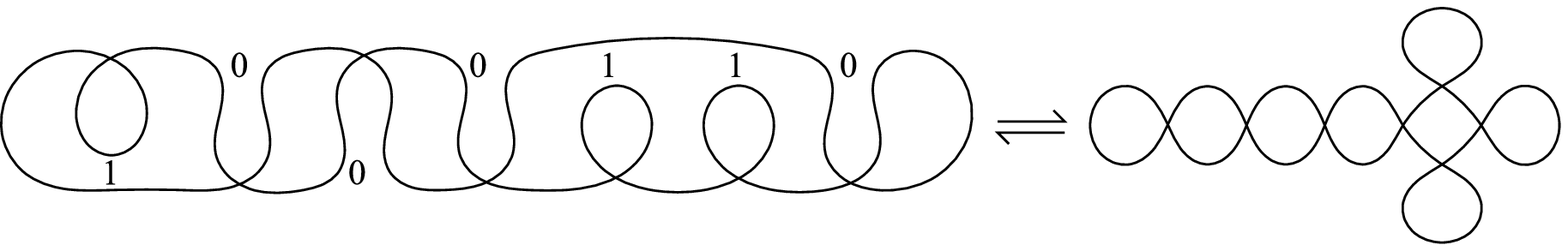}.
\end{itemize}
\end{remark}
\subsection{Twist link}\label{subsec:twisted-link}
Let $ W_n(x) :=\sum_{k\geq 0}^{}w(n,k)x^k$ denote the generating polynomial for the $ n $-twist link.

\begin{theorem}
The generating polynomial for the $ n $-twist link is given by the recurrence relation
\begin{equation}\label{eq:Wnx1}
W_n(x)=\left(2x^2+4x+2\right)W_{n-1}(x),
\end{equation}
and is expressed by the closed form formula
\begin{equation}\label{eq:Wnx2}
W_{n}(x)=x\left(2x^2+4x+2\right)^n.
\end{equation}
\end{theorem}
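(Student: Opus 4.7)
The plan is to exploit the framework already established, rather than redo the state-enumeration from scratch. By \Cor{cor:genpoly}, the entire theorem reduces to computing the generating polynomial of the single generator $W_1$, since then $W_n(x) = x\bigl(x^{-1}W_1(x)\bigr)^n$ and the recurrence \eqref{eq:Wnx1} is an immediate consequence of the closed form \eqref{eq:Wnx2}. So the only real work is to identify $W_1(x)$.

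The key observation is that, as already noted in \Figs{Fig:ConnectedSumDef}, the $1$-twist link is precisely the connected sum of a $1$-link with a $1$-twist loop, that is, $W_1 = L_1 \# T_1$. Hence, applying \Prop{Prop:connectedsum}, one gets
\begin{equation*}
W_1(x) \;=\; x^{-1}\, L_1(x)\, T_1(x).
\end{equation*}
Substituting $L_1(x)=2x^2+2x$ from \eqref{Lnx2} and $T_1(x)=x^2+x$ from \eqref{eq:Tnx2} gives
\begin{equation*}
W_1(x) \;=\; x^{-1}\cdot 2x(x+1)\cdot x(x+1) \;=\; 2x(x+1)^2 \;=\; 2x^3+4x^2+2x.
\end{equation*}
Therefore $x^{-1}W_1(x) = 2x^2+4x+2$, and \Cor{cor:genpoly} delivers
\begin{equation*}
W_n(x) \;=\; x\bigl(2x^2+4x+2\bigr)^n,
\end{equation*}
which is exactly \eqref{eq:Wnx2}. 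The recurrence \eqref{eq:Wnx1} then follows by factoring one copy of $2x^2+4x+2$ out of $W_n(x)$.

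There is essentially no obstacle, as the theorem is a straightforward application of the machinery from the preliminary subsection together with the polynomials of the two elementary generators $L_1$ and $T_1$. The only point where care is needed is verifying, directly from the diagrams in \Tabs{Tab:KnotShadows} or from \Figs{Fig:ConnectedSumDef}, that the $1$-twist link is indeed equivalent (under the $0S^2$ move and planar isotopy) to $L_1 \# T_1$; by \Rem{Rem:generator} the choice of the specific representative of the equivalence class is irrelevant to the value of the generating polynomial. Alternatively, as a sanity check, one may directly enumerate the $2^3=8$ states of $W_1$ and verify that two give three circles, four give two circles, and two give a single circle, matching $W_1(x)=2x^3+4x^2+2x$.
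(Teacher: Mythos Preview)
Your proposal is correct and follows essentially the same approach as the paper: both identify $W_1 = L_1 \# T_1$, apply \Prop{Prop:connectedsum} to compute $W_1(x) = x^{-1}L_1(x)T_1(x) = 2x^3+4x^2+2x$, and then invoke \Cor{cor:genpoly} to obtain the closed form and the recurrence. Your additional remarks about verifying the equivalence and the optional sanity check on the $8$ states are fine elaborations but not needed for the argument.
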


\begin{proof}
Note that $ W_1=L_1\#T_1 $. Then by \Prop{Prop:connectedsum} we have
\begin{align*}
W_1(x)&=x^{-1}L_1(x)T_1(x)\\
&=x^{-1}\left(2x^2+2x\right)\left(x^2+x\right)\\
&=2x^3+4x^2+2x.
\end{align*}
We conclude by \Cor{cor:genpoly}.
\end{proof}

\begin{corollary}
The generating function for the  sequence $ \big\{W_n(x)\big\}_{n\geq0} $ is given by
\begin{equation}
W(x;y):=\dfrac{x}{1-y(2x^2+4x+2)}.
\end{equation}
\end{corollary}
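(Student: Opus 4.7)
The plan is to start from the closed-form expression $W_n(x)=x(2x^2+4x+2)^n$ established in the preceding theorem and substitute it into the formal power series definition $W(x;y):=\sum_{n\geq 0}W_n(x)\,y^n$. After factoring out $x$ (which is independent of $n$), the remaining sum becomes $\sum_{n\geq 0}\bigl(y(2x^2+4x+2)\bigr)^n$, a geometric series in the quantity $y(2x^2+4x+2)$.

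More concretely, I would organize the argument in three short steps. First, invoke the formula $W_n(x)=x(2x^2+4x+2)^n$ from \eqref{eq:Wnx2} to rewrite the defining sum as
\begin{equation*}
W(x;y)=\sum_{n\geq 0}x(2x^2+4x+2)^n y^n = x\sum_{n\geq 0}\bigl(y(2x^2+4x+2)\bigr)^n.
\end{equation*}
Second, apply the standard geometric series identity $\sum_{n\geq 0}z^n=1/(1-z)$ with $z=y(2x^2+4x+2)$, obtaining $W(x;y)=x/(1-y(2x^2+4x+2))$. Third, note that this identity holds in the ring of formal power series in $y$ with coefficients in $\mathbb{Z}[x]$, so no convergence issues arise and the equality is exactly the claimed closed form.

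Alternatively, one could derive the same result without invoking the closed form explicitly: multiply the recurrence \eqref{eq:Wnx1} by $y^n$ and sum over $n\geq 1$, obtaining $W(x;y)-W_0(x)=y(2x^2+4x+2)W(x;y)$; since $W_0(x)=U(x)=x$, solving for $W(x;y)$ yields the same expression. Either route is routine; there is no real obstacle, as the corollary is essentially a direct specialization of \Thm{Thm:gf} with $K=W_1$ and $K(x)=2x^3+4x^2+2x$ (so that $x^{-1}K(x)=2x^2+4x+2$), and the only thing to verify is that the substitution produces exactly the stated denominator $1-y(2x^2+4x+2)$.
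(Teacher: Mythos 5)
Your proof is correct and matches the paper's route: the paper declares all corollaries in these subsections to be immediate applications of Theorem~\ref{Thm:gf}, whose own proof is exactly your geometric-series summation of $K_n(x)=x\big(x^{-1}K(x)\big)^n$, here with $K=W_1$ and $x^{-1}W_1(x)=2x^2+4x+2$. Your closing remark identifying the corollary as the specialization $x^2/\big(x-yW_1(x)\big)=x/\big(1-y(2x^2+4x+2)\big)$ is precisely the paper's intended derivation.
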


The recurrence relation  \eqref{eq:Wnx1} along with the closed formula \eqref{eq:Wnx2} allow us to write:
\begin{equation}\label{eq:wnk}
\begin{cases}
w(n,0)=0,\ w(n,1)=2^n,\ w(n,2)=n2^{n+1}, & n\geq 0;\\
w(n,k)=2w(n-1,k-2)+4w(n-1,k-1)+2w(n-1,k),	&k\geq 2,\ n\geq 0.
\end{cases}
\end{equation}
For $ k\geq 1 $, we obtain $w(n,k)=2^n\binom{2n}{k-1}$  \cite[\seqnum{A139548}]{Sloane}. Hence we have \Tabs{Tab:Twistlink}.

\begin{table}[ht]
\centering
\resizebox{\linewidth}{!}{%
$\begin{array}{c|rrrrrrrrrrrrrr}
n\ \backslash\ k		 &0		 &1		 &2		 &3		 &4		 &5		 &6		 &7		 &8		 &9		 &10		 &11	 &12	 &13\\
\midrule
0	 &0	 &1	 &	 &	 &	 &	 &	 &	 &	 &	 &	 &	 &	 &\\
1	 &0	 &2	 &4	 &2	 &	 &	 &	 &	 &	 &	 &	 &	 &	 &\\
2	 &0	 &4	 &16	 &24	 &16	 &4	 &	 &	 &	 &	 &	 &	 &	 &\\
3	 &0	 &8	 &48	 &120	 &160	 &120	 &48	 &8	 &	 &	 &	 &	 &	 &\\
4	 &0	 &16	 &128	 &448	 &896	 &1120	 &896	 &448	 &128	 &16	 &	 &	 &	 &\\
5	 &0	 &32	 &320	 &1440	 &3840	 &6720	 &8064	 &6720	 &3840	 &1440	 &320	 &32	 &	 &\\
6	 &0	 &64	 &768	 &4224	 &14080	 &31680	 &50688	 &59136	 &50688	 &31680	 &14080	 &4224	 &768	 &64\\
\end{array}$
}
\caption{Values of $ w(n,k) $ for $ 0\leq n\leq 6 $ and $ 0\leq k\leq 13 $.}
\label{Tab:Twistlink}
\end{table}

\subsection{Hitch knot}\label{subsec:hitch-knot}
Let $ H_n(x) :=\sum_{k\geq 0}^{}h(n,k)x^k$ denote the generating polynomial for the $ n $-hitch knot.

\begin{theorem}
The generating polynomial for the $ n $-hitch knot is given by the recurrence relation
\begin{equation}\label{eq:Hnx1}
H_n(x)=\left(x^2+4x+3\right)H_{n-1}(x),
\end{equation}
and is expressed by the closed form formula
\begin{equation}\label{eq:Hnx2}
H_n(x)=x\left(x^2+4x+3\right)^n.
\end{equation}
\end{theorem}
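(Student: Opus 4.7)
The plan is to follow exactly the template used in the preceding $n$-twist link theorem, equations \eqref{eq:Wnx1}--\eqref{eq:Wnx2}: first compute the generating polynomial of the single generator $H_1$ (the trefoil), then invoke \Cor{cor:genpoly} to lift the result to all $n$. The recurrence \eqref{eq:Hnx1} is an immediate by-product of the closed form \eqref{eq:Hnx2}, since $H_n(x)/H_{n-1}(x)=x^2+4x+3$ by inspection, so essentially all of the work lies in pinning down $H_1(x)$.

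To establish $H_1(x)=x^3+4x^2+3x$, I would apply the split rule \eqref{eq:rules} at a single crossing of the trefoil rather than enumerate all $2^3=8$ states from scratch. The two resulting $2$-crossing shadows are, up to planar isotopy and $0S^2$ moves, the $2$-twist loop $T_2$ (from one of the splits) and the Hopf link $L_1$ (from the other). By \Rem{Rem:generator}, the generating polynomial is invariant on $0S^2$ equivalence classes, so the values $T_2(x)=x(x+1)^2$ and $L_1(x)=x(2x+2)$ obtained in \Subsec{subsec:twist-loop} and \Subsec{subsec:links} may be used directly. Adding them yields
\begin{equation*}
H_1(x)=T_2(x)+L_1(x)=x(x+1)^2+x(2x+2)=x^3+4x^2+3x.
\end{equation*}
As a sanity check, one could also enumerate the eight states of the trefoil directly in a figure analogous to \Figs{Fig:1LinkStates} and confirm one state with $3$ circles, four with $2$ circles, and three with $1$ circle.

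Given $H_1(x)$, \Cor{cor:genpoly} with $K=H_1$ gives
\begin{equation*}
H_n(x)=x\bigl(x^{-1}H_1(x)\bigr)^n=x(x^2+4x+3)^n,
\end{equation*}
which is \eqref{eq:Hnx2}, and factoring one copy of $x^2+4x+3$ out produces the recurrence \eqref{eq:Hnx1}.

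The main obstacle I anticipate is the diagrammatic step: correctly identifying the two reduced shadows after splitting one crossing of the trefoil. This is a purely visual check tracking how arcs reconnect, and a slip would propagate into wrong coefficients for $H_1$ and hence for every $H_n$. A small supporting figure exhibiting the single-crossing split of $H_1$, or alternatively the full collection of eight state diagrams, should make this step unambiguous.
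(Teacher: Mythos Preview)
Your proposal is correct and follows the same overall strategy as the paper: compute $H_1(x)$, then invoke \Cor{cor:genpoly} to obtain \eqref{eq:Hnx2}, from which \eqref{eq:Hnx1} is immediate. The only difference is in how $H_1(x)$ is obtained: the paper simply displays all eight states of the trefoil in a figure and reads off $H_1(x)=x^3+4x^2+3x$, whereas you split one crossing and recycle the already-known values $T_2(x)$ and $L_1(x)$ via rule~\eqref{eq:rules}. Your route is marginally slicker since it avoids drawing all eight states, but it trades that for the diagrammatic identification step you flagged; either way the content is the same.
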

\begin{proof}
See \Figs{Fig:TrefoilStates}.
\begin{figure}[ht]
\centering
\includegraphics[width=.7\linewidth]{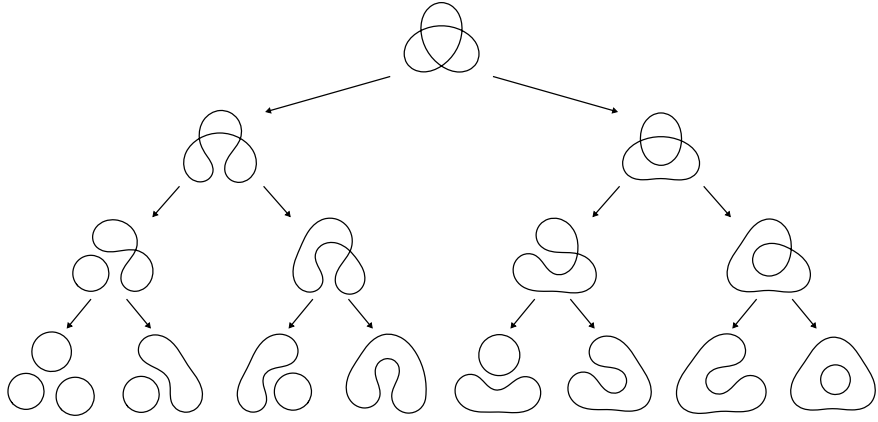}
\caption{The states of the $ 1 $-hitch, $ H_1(x)=x^3+4x^2+3x $.}
\label{Fig:TrefoilStates}
\end{figure}
\end{proof}
\begin{corollary}
The generating function for the  sequence $ \big\{H_n(x)\big\}_{n\geq0} $ is given by
\begin{equation*}
H(x;y):=\dfrac{x}{1-y\left(x^2+4x+3\right)}.
\end{equation*}
\end{corollary}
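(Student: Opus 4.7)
The plan is to derive the bivariate generating function directly from the closed form $H_n(x)=x(x^2+4x+3)^n$ established in the preceding theorem, treating everything as a formal power series in $y$ with coefficients in $\mathbb{Z}[x]$. I would start from the definition $H(x;y):=\sum_{n\geq0} H_n(x)\,y^n$ (the same convention used in the proof of \Thm{Thm:gf}), substitute the closed form, and then factor out the part independent of $n$.

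Concretely, the computation is
\begin{equation*}
H(x;y)=\sum_{n\geq0} x\bigl(x^{2}+4x+3\bigr)^{n}y^{n}
      = x\sum_{n\geq0}\bigl(y(x^{2}+4x+3)\bigr)^{n}
      = \dfrac{x}{1-y(x^{2}+4x+3)},
\end{equation*}
where the last equality is the standard geometric series identity valid in the ring of formal power series in $y$. Alternatively, one could appeal directly to \Thm{Thm:gf} with the generator $K=H_{1}$: since $H_{1}(x)=x^{3}+4x^{2}+3x=x(x^{2}+4x+3)$, the theorem yields $H(x;y)=x^{2}/(x-yH_{1}(x))$, and dividing numerator and denominator by $x$ gives the stated expression.

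There is no real obstacle here; the only point worth being careful about is justifying the geometric series manipulation as a formal power series identity (no convergence hypothesis is required, since we are working in $\mathbb{Z}[x][[y]]$ and $y(x^{2}+4x+3)$ has zero constant term in $y$). Once that is noted, the corollary follows in one line from the closed form of \Thm{Thm:gf} or, equivalently, from the closed form \eqref{eq:Hnx2} by summing.
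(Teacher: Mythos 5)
Your proposal is correct and matches the paper's intent: the corollary is an immediate application of \Thm{Thm:gf} with generator $K=H_1$ (equivalently, summing the geometric series $\sum_{n\geq0}x\bigl(y(x^2+4x+3)\bigr)^n$ in $\mathbb{Z}[x][[y]]$), which is exactly how the paper derives all the generating functions in these subsections. No differences worth noting.
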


By \eqref{eq:Hnx1} and \eqref{eq:Hnx2}, we deduce the following recurrence relation:
\begin{equation}\label{eq:hnk}
\begin{cases}
h(n,0)=0,\ h(n,1)=3^n,\ h(n,2)=4(n-1)3^{n-2}, & n\geq 0;\\
h(n,k)=h(n-1,k-2)+4h(n-1,k-1)+3h(n-1,k),	&k\geq 2,\ n\geq 0.
\end{cases}
\end{equation}
The recurrence relation \eqref{eq:hnk} is used to generate the entries in  \Tabs{Tab:hitch} \cite[\seqnum{A299989}]{Sloane}.
Particularly, we recognize the following sequences:
\begin{itemize}
\item $ h(n,1)= 3^n$, the powers of $ 3 $ \cite[\seqnum{A000244}]{Sloane}; 	
\item $ h(n,2)=4n3^{n-1} $, the sum of the lengths of the drops in all ternary words of length $ n+1 $ on $ \{0,1,2\}  $ \cite[\seqnum{A120908}]{Sloane};  	
\item $ h(n,n+1)=\sum_{k=0}^n {\binom{n}{k}}^2 3^k $  \cite[\seqnum{A069835}]{Sloane};
\item $ h(n,2n+1)=1$, the all $ 1 $'s sequence \cite[\seqnum{A000012}]{Sloane};
\item $ h(n,2n-1)=n(8n-5)$ \cite[\seqnum{A139272}]{Sloane};
\item $ h(n,2n)=4n$,  the multiples of 4 \cite[\seqnum{A008586}]{Sloane}.
\end{itemize}

\begin{table}[ht]
\centering
\resizebox{\linewidth}{!}{%
$\begin{array}{c|rrrrrrrrrrrrrr}
n\ \backslash\ k		 &0		 &1		 &2		 &3		 &4		 &5		 &6		 &7		 &8		 &9		 &10		 &11	 &12	 &13\\
\midrule
0	 &0	 &1	 &	 &	 &	 &	 &	 &	 &	 &	 &	 &	 &	 &\\
1	 &0	 &3	 &4	 &1	 &	 &	 &	 &	 &	 &	 &	 &	 &	 &\\
2	 &0	 &9	 &24	 &22	 &8	 &1	 &	 &	 &	 &	 &	 &	 &	 &\\
3	 &0	 &27	 &108	 &171	 &136	 &57	 &12	 &1	 &	 &	 &	 &	 &	 &\\
4	 &0	 &81	 &432	 &972	 &1200	 &886	 &400	 &108	 &16	 &1	 &	 &	 &	 &\\
5	 &0	 &243	 &1620	 &4725	 &7920	 &8430	 &5944	 &2810	 &880	 &175	 &20	 &1	 &	 &\\
6	 &0	 &729	 &5832	 &20898	 &44280	 &61695	 &59472	 &40636	 &19824	 &6855	 &1640	 &258	 &24	 &1
\end{array}$
}
\caption{Values of $h(n,k)$ for $0 \leq n\leq 6 $ and $ 0\leq k\leq 13 $.}
\label{Tab:hitch}
\end{table}

\subsection{Overhand knot}\label{subsec:overhand-knot}
\begin{theorem}
The generating polynomial for the $ n $-overhand knot is given by the recurrence relation
\begin{equation}
O_n(x)=(x^2+4x+3)O_{n-1}(x),
\end{equation}
and is expressed by the closed form formula
\begin{equation}
O_n(x)=x\left(x^2+4x+3\right)^n.
\end{equation}
\end{theorem}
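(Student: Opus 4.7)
The plan is to reduce this theorem entirely to the hitch-knot theorem via the equivalence relation set up in the background section. I first observe that the single overhand knot $O_1$ is the trefoil, and so is the single hitch knot $H_1$ — indeed, the excerpt already records $\overline{W_1} = O_1 = H_1$ among the equivalent pairs under the $0S^2$ move. By the remark that follows the definition of equivalence (knots in the same $0S^2$-equivalence class have identical generating polynomials, since $0S^2$ neither creates nor destroys crossings and hence preserves the state enumeration), we can conclude
\begin{equation*}
O_1(x) = H_1(x) = x^3 + 4x^2 + 3x = x\bigl(x^2 + 4x + 3\bigr).
\end{equation*}

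Next I invoke the generator formula, \Cor{cor:genpoly}, applied to $K = O_1$. Since the $n$-overhand knot is by construction $O_n = (O_1)_n$ (i.e., the $n$-fold connected sum of $O_1$ with itself), the corollary yields
\begin{equation*}
O_n(x) = x\bigl(x^{-1}O_1(x)\bigr)^n = x\bigl(x^2 + 4x + 3\bigr)^n,
\end{equation*}
which is the claimed closed form. The recurrence $O_n(x) = (x^2 + 4x + 3)\, O_{n-1}(x)$ is then immediate by factoring one copy of $(x^2+4x+3)$ out of the product, with base case $O_0(x) = U(x) = x$.

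Essentially no step here presents a genuine obstacle, because the machinery is already in place: \Prop{Prop:connectedsum} does the work of the connected sum, \Cor{cor:genpoly} packages the iterated connected sum, and the equivalence remark does the comparison with $H_1$. The only conceptual point worth emphasizing is that one must justify using the $H_1$ computation rather than redoing the state enumeration for $O_1$ directly; this is exactly what \Rem{Rem:generator} allows, since the generating polynomial depends only on the equivalence class of the generator and not on the specific arcs used to form the connected sum. An alternative, if one wished to avoid appealing to the equivalence remark, would be to draw the six states of $O_1$ explicitly (as in \Figs{Fig:TrefoilStates} for $H_1$) and verify directly that one state has three circles, four have two circles, and three have one circle, so that $O_1(x) = x^3+4x^2+3x$; but this is purely a bookkeeping check and the equivalence-class argument is cleaner.
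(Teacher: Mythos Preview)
Your argument is correct. The paper's own proof is the direct route you mention as the alternative: it simply displays the eight state diagrams of $O_1$ in a figure, reads off $O_1(x)=x^3+4x^2+3x$, and (implicitly) applies \Cor{cor:genpoly}. Your primary argument instead transfers the computation from $H_1$ via the $0S^2$-equivalence $O_1=H_1$ and the remark that equivalent shadows have identical generating polynomials; this is a perfectly legitimate shortcut and is in the spirit of \Rem{Rem:generator} and the paper's own comment after the overhand-knot corollary that ``the results are as expected since the knots $H_1$ and $O_1$ are equivalent under the $0S^2$ move.'' One small slip: in your parenthetical description of the alternative you write ``the six states of $O_1$,'' but $O_1$ has three crossings and hence $2^3=8$ states (the counts $1+4+3$ you give already sum to $8$).
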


\begin{proof}
See \Figs{Fig:1-overhand-states}.
\begin{figure}[ht]
\centering
\includegraphics[width=.7\linewidth]{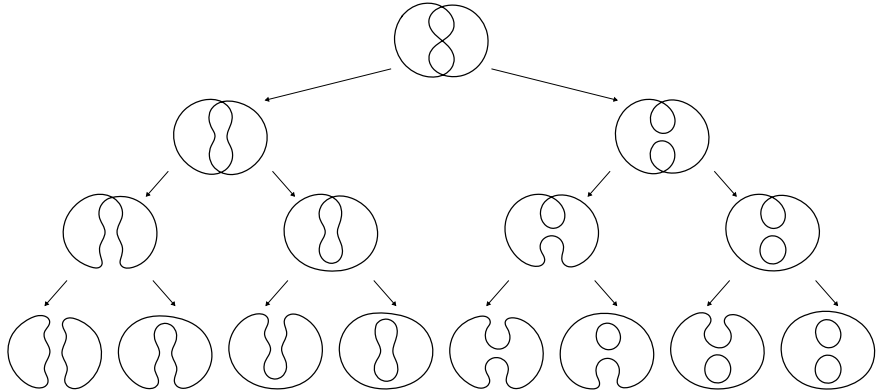}
\caption{The states of the $ 1 $-overhand knot, $ O_1(x)=x^3+4x^2+3x $.}
\label{Fig:1-overhand-states}
\end{figure}
\end{proof}

\begin{corollary}
The generating function for the  sequence $ \big\{O_n(x)\big\}_{n\geq0} $ is given by
\begin{equation}
O(x;y):=\dfrac{x}{1-y\left(x^2+4x+3\right)}.
\end{equation}
\end{corollary}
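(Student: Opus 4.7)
The plan is to invoke \Thm{Thm:gf} directly with $K=O_1$. The preceding theorem gives the closed form $O_n(x)=x(x^2+4x+3)^n$, and taking $n=1$ yields the generator $O_1(x)=x(x^2+4x+3)=x^3+4x^2+3x$. Since the $n$-overhand knot is by definition $O_n=(O_1)_n$, the sequence $\{O_n(x)\}_{n\geq 0}$ fits exactly the framework of \Thm{Thm:gf}, whose proof is already discharged earlier in the paper via \Cor{cor:genpoly}.

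Concretely, the generating function is defined by $O(x;y):=\sum_{n\geq 0} O_n(x)\,y^n$. Substituting $K(x)=O_1(x)$ into the formula $K(x;y)=x^2/(x-yK(x))$ from \Thm{Thm:gf} gives
\begin{equation*}
O(x;y)=\frac{x^2}{x-y\,O_1(x)}=\frac{x^2}{x-y\,x(x^2+4x+3)},
\end{equation*}
and cancelling a factor of $x$ from numerator and denominator produces the claimed expression $x/(1-y(x^2+4x+3))$.

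As a sanity check that does not rely on \Thm{Thm:gf}, one may instead substitute the closed form $O_n(x)=x(x^2+4x+3)^n$ directly into the defining sum and recognize the result as a geometric series in $y$ with common ratio $y(x^2+4x+3)$:
\begin{equation*}
\sum_{n\geq 0} x(x^2+4x+3)^n y^n = x\sum_{n\geq 0}\bigl(y(x^2+4x+3)\bigr)^n=\frac{x}{1-y(x^2+4x+3)}.
\end{equation*}
Both derivations are routine; there is no real obstacle since all the necessary machinery (\Cor{cor:genpoly} and \Thm{Thm:gf}) has already been set up in the preliminaries. The only thing to notice is the trivial simplification that turns the $x^2/x$ in the numerator/denominator into a single $x$, confirming the statement exactly as written.
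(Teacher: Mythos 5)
Your proposal is correct and matches the paper's approach: the paper states that the corollaries in subsections \ref{subsec:twist-loop}--\ref{subsec:overhand-knot} are immediate applications of \Cor{cor:genpoly} and \Thm{Thm:gf}, which is exactly the substitution $K=O_1$ with $O_1(x)=x^3+4x^2+3x$ that you perform. The geometric-series sanity check is a harmless bonus.
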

The results are as expected since the knots $ H_1 $ and $ O_1 $ are equivalent under the $ OS^2 $ move. Thus we obtain the same statistics as the hitch knot (subsection \ref{subsec:hitch-knot}).

\section{The generating polynomial for the closure}\label{Sec:gp2}
\subsection{Preliminaries}
Let $ K $ be a knot diagram. The state diagrams of the closure of $ K $  can be illustrated as in \Figs{fig:stateofclosure}.
\begin{figure}[ht]
\centering
\includegraphics[width=0.6\linewidth]{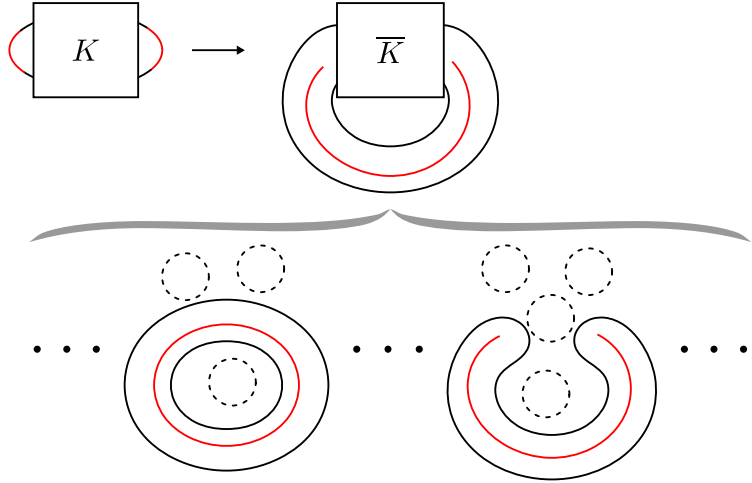}
\caption{The state diagrams of a closure of the knot $ K $.}
\label{fig:stateofclosure}
\end{figure}
Therefore, there exist two polynomials $ \upalpha_K,\upbeta_K\in\mathbb{N}[x]$  such that we can write the generating polynomial for the knot $ \overline{K} $ as
\begin{equation}\label{eq:components}
\overline{K}(x)=x^2\upalpha_K(x)+x\upbeta_K(x).
\end{equation}
Conversely, if we ``cancel'' the connection, then we have the additional identity
\begin{equation}\label{eq:closure}
{K}(x)=x^2\upbeta_K(x) +x\upalpha_K(x).
\end{equation}
Let us refer to $ \upalpha_K(x) $ and $ \upbeta_K(x) $ as the components of the polynomial $ K(x) $. We can then  extend \eqref{eq:components} as follows.
\begin{proposition}\label{prop:sumclosure}
Let $ K $, $ K' $ be two knot diagrams,  and let $ K(x) $, $ K'(x) $ be respectively their generating polynomials. If $ \upalpha_{K}(x) $ and $ \upbeta_{K}(x) $ are the components of the polynomial $ K(x) $, then we obtain
\[
\overline{\left(K\#K'\right)}(x)=\upalpha_{K}(x)\overline{K'}(x)+\upbeta_{K}(x)K'(x).
\]
\end{proposition}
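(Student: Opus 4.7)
First I would unpack the combinatorial content of identities \eqref{eq:components}--\eqref{eq:closure}. Reading the state diagrams in \Figs{fig:stateofclosure}, each state $S$ of $K$ falls into one of two classes: a class $\mathcal A$ in which the two arcs used by the closure sit on a single circle of $S$, and a class $\mathcal B$ in which they sit on two distinct circles of $S$. Setting $\upalpha_K(x)=\sum_{S\in\mathcal A}x^{|S|-1}$ and $\upbeta_K(x)=\sum_{S\in\mathcal B}x^{|S|-2}$ then makes both identities transparent: closing a class-$\mathcal A$ state splits its distinguished circle into two, while closing a class-$\mathcal B$ state merges its two distinguished circles into one. A quick sanity check is that $U\in\mathcal A$ forces $\upalpha_U=1$ and $\upbeta_U=0$, giving $\overline{U}(x)=x^2$ as required.

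Next I would analyze the states of $K\#K'$, each of which is a pair $(S,S')$. The connected-sum arc identifies one closure arc of $K$ with one closure arc of $K'$, merging the circle of $S$ carrying that arc with the circle of $S'$ carrying its partner and leaving $|S|+|S'|-1$ circles in the combined state. The remaining closure arcs (one from $K$, one from $K'$) become the closure arcs of $K\#K'$. Tracing these along the circles of the combined state shows they share a circle exactly when $(S,S')\in\mathcal A\times\mathcal A$; in the three cases $\mathcal A\times\mathcal B$, $\mathcal B\times\mathcal A$ and $\mathcal B\times\mathcal B$ they end up on distinct circles. Consequently the outer closure on $\overline{K\#K'}$ splits, gaining one circle, in the first case and merges, losing one circle, in the other three.

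Summing with the resulting weights I would obtain
\[
\overline{K\#K'}(x)=\sum_{(S,S')\in\mathcal A\times\mathcal A}x^{|S|+|S'|}+\sum_{(S,S')\notin\mathcal A\times\mathcal A}x^{|S|+|S'|-2}.
\]
Substituting $\sum_{S\in\mathcal A}x^{|S|}=x\upalpha_K(x)$ and $\sum_{S\in\mathcal B}x^{|S|}=x^2\upbeta_K(x)$ (and similarly for $K'$) and expanding the four products regroups the right-hand side as $\upalpha_K(x)\bigl(x^2\upalpha_{K'}(x)+x\upbeta_{K'}(x)\bigr)+\upbeta_K(x)\bigl(x\upalpha_{K'}(x)+x^2\upbeta_{K'}(x)\bigr)$, which by \eqref{eq:components}--\eqref{eq:closure} is exactly $\upalpha_K(x)\overline{K'}(x)+\upbeta_K(x)K'(x)$.

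The step I expect to be the main obstacle is the planar tracing argument that decides, in each case, whether the two leftover closure arcs of $K\#K'$ lie on a common circle. The statement is intuitive from a picture but has to be verified case-by-case; the $\mathcal B\times\mathcal B$ case is the one to be careful about, since one has to rule out that a long walk through the freshly glued circle accidentally reconnects the two leftover arcs into a single circle. Once this combinatorial claim is settled, the rest is bookkeeping with the decomposition identities already recorded.
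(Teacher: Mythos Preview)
Your argument is correct and rests on the same $\mathcal A/\mathcal B$ decomposition of the states of $K$ that the paper uses, but you push it one step further than needed. The paper splits only the crossings of $K$ and leaves $K'$ intact: for a state $S\in\mathcal A$ the single distinguished circle, once threaded through $K'$ and then closed, is literally the diagram $\overline{K'}$ accompanied by the $|S|-1$ remaining free circles, so this class contributes $\upalpha_K(x)\,\overline{K'}(x)$; for $S\in\mathcal B$ the two distinguished circles, threaded and closed through $K'$, collapse to the diagram $K'$ itself with $|S|-2$ free circles, contributing $\upbeta_K(x)\,K'(x)$. The formula drops out immediately, with no reference to $\upalpha_{K'}$ or $\upbeta_{K'}$. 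Your route---also splitting $K'$ and doing the four-case bookkeeping over $\mathcal A\times\mathcal A$ versus its complement---reaches the same identity after regrouping and has the merit of being fully explicit, but the paper's shortcut sidesteps the case analysis entirely. As a minor remark, the $\mathcal B\times\mathcal B$ case you flag as the delicate one is in fact the easiest: the two leftover arcs sit on circles that are not involved in the connected-sum gluing at all, so there is no tracing to do.
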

We should note that if $ K'= U$, then we get back to the definition of the closure \eqref{eq:closure}. Indeed we have
\begin{align*}
\overline{K}(x)=\overline{\left(K\# U\right) }(x)&=\upalpha_{K}(x)\overline{U}(x)+\upbeta_{K}(x)U(x)\\
&=x^2\upalpha_{K}(x)+x\upbeta_{K}(x).
\end{align*}

\begin{proof}[Proof of \Prop{prop:sumclosure}] We first need to compute the generating polynomial for the knot $ K $.
Referring to \Figs{fig:stateofclosure}, the ``largest double circle''  becomes the closure  $ \overline{K'} $, while the ``largest single circle'' is then connected with $ K' $. See \Figs{fig:closedsum} for the illustration. 	
\begin{figure}[ht]
\centering
\includegraphics[width=0.625\linewidth]{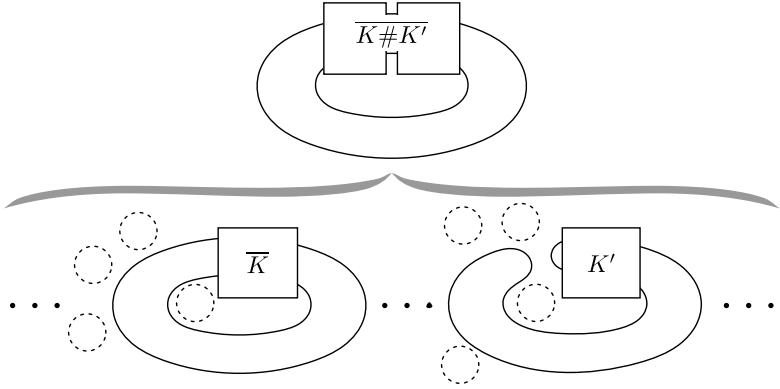}
\caption{The states of a composition closure.}
\label{fig:closedsum}
\end{figure}
Next, we recover the same components $ \upalpha_{K}(x) $ and $ \upbeta_{K}(x) $, and taking into account the previously mentioned substitution, we write
\[
\overline{\left(K\#K'\right)}(x)=\upalpha_{K}(x)\overline{K'}(x)+\upbeta_{K}(x)K'(x).
\]
\end{proof}

\begin{corollary}\label{Cor:GenClosure}
Let $ K(x) $ be the generating polynomial for an arbitrary knot $ K $, and  let $ \upalpha_K(x) $ and $ \upbeta_K(x) $ be its components. For a nonnegative integer $ n $, we have
\begin{align*}
\overline{K_n}(x)&=\left(\upalpha_K(x)+x\upbeta_{K}(x)\right)^n+\left(x^2-1\right)\upalpha_{K}(x)^n\\
&=x^{-1}K_n(x)+\left(x^2-1\right)\upalpha_{K}(x)^n.
\end{align*}
\end{corollary}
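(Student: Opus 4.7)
The plan is to induct on $n$, using \Prop{prop:sumclosure} with $K' = K_{n-1}$ to generate a one-step recurrence for $\overline{K_n}(x)$. The key preliminary observation is that identity \eqref{eq:closure} rewrites as
\begin{equation*}
x^{-1}K(x) = \upalpha_K(x) + x\,\upbeta_K(x),
\end{equation*}
which by \Cor{cor:genpoly} gives $K_n(x) = x\bigl(\upalpha_K(x) + x\upbeta_K(x)\bigr)^n$. This already shows that the two claimed expressions for $\overline{K_n}(x)$ agree, so it suffices to prove the first one.

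For the base case $n=0$, I would note that $K_0 = U$ and $\overline{U}(x) = x^2$ (two disjoint loops), while the formula evaluates to $1 + (x^2 - 1) = x^2$. For the inductive step, writing $K_n = K \# K_{n-1}$ and applying \Prop{prop:sumclosure} yields
\begin{equation*}
\overline{K_n}(x) = \upalpha_K(x)\,\overline{K_{n-1}}(x) + \upbeta_K(x)\,K_{n-1}(x).
\end{equation*}
Substituting the inductive hypothesis together with the identity $K_{n-1}(x) = x\bigl(\upalpha_K(x) + x\upbeta_K(x)\bigr)^{n-1}$, the right-hand side becomes
\begin{equation*}
\upalpha_K(x)\bigl[(\upalpha_K(x) + x\upbeta_K(x))^{n-1} + (x^2-1)\upalpha_K(x)^{n-1}\bigr] + x\,\upbeta_K(x)\bigl(\upalpha_K(x) + x\upbeta_K(x)\bigr)^{n-1}.
\end{equation*}
The two terms carrying $(\upalpha_K + x\upbeta_K)^{n-1}$ combine by factoring $(\upalpha_K + x\upbeta_K)$, giving $(\upalpha_K + x\upbeta_K)^n$, while the remaining term is exactly $(x^2-1)\upalpha_K(x)^n$. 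This closes the induction.

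I do not anticipate a serious obstacle: the only substantive content is recognising the factorisation $x^{-1}K(x) = \upalpha_K + x\upbeta_K$ that makes \Cor{cor:genpoly} compatible with the recursion produced by \Prop{prop:sumclosure}. The mildly delicate point is bookkeeping the base case so that the correction term $(x^2-1)\upalpha_K(x)^n$ is accounted for at $n=0$; once this is verified, the induction is purely algebraic.
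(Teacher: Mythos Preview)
Your proof is correct and follows essentially the same route as the paper's: both start from \Prop{prop:sumclosure} applied to $K_n = K\#K_{n-1}$ and rely on the identity $x^{-1}K(x) = \upalpha_K(x) + x\,\upbeta_K(x)$ coming from \eqref{eq:closure}. The only difference is cosmetic---the paper unrolls the recursion completely and evaluates the resulting geometric sum, whereas you verify the closed form by a one-step induction; your version is arguably tidier since it avoids the division by $\upalpha_K(x) - x^{-1}K(x) = -x\,\upbeta_K(x)$.
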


\begin{proof}
We write 	$ \overline{K_n}(x)=	\overline{\left(K\#K_{n-1}\right)}(x) $, then apply \Prop{prop:sumclosure}. We have
\begin{align*}
\overline{K_n}(x)&=\upalpha_K(x)\overline{K_{n-1}}(x)+\upbeta_K(x)K_{n-1}\\
&=x^2\upalpha_K(x)^n+\upbeta_K(x)\sum_{k=0}^{n-1}\upalpha_K(x)^{n-k-1}K_{k}(x).
\end{align*}
By \eqref{eq:generatorp} and \eqref{eq:closure} we obtain
\begin{align*}
\overline{K_n}(x)&=x^2\upalpha_K(x)^n+x\upbeta_{K}(x)\dfrac{\upalpha_{K}(x)^n-\left(x^{-1}K(x)\right)^n}{\upalpha_{K}(x)-x^{-1}K(x)}\\
&=x^2\upalpha_K(x)^n+\big(\upalpha_K(x)+x\upbeta_{K}(x)\big)^n-\alpha_K(x)^n\\
&=\big(\upalpha_K(x)+x\upbeta_{K}(x)\big)^n+\left(x^2-1\right)\upalpha_{K}(x)^n\\
&=x^{-1}K_n(x)+\left(x^2-1\right)\upalpha_{K}(x)^n.
\end{align*}
\end{proof}

\begin{corollary}\label{cor:gpclosure}
Let $ \overline{K}(x;y) :=\sum_{n\geq0}\overline{K_n}(x) y^n$ and $ K(x;y) :=\sum_{n\geq0}K_n(x) y^n$ respectively denote the generating function for the sequence $ \left\{\overline{K_n}(x)\right\}_{n\geq 0} $ and $ \big\{K_n(x)\big\}_{n\geq 0} $. Then we have
\begin{align*}
\overline{K}(x;y) &=\dfrac{x^2-1}{1-y\upalpha_K(x)}+\dfrac{x}{x-yK(x;y)}\\
&=\dfrac{1}{1-y\upalpha_K(x)}\Big(x^2+y\upbeta_K(x)K(x;y)\Big).
\end{align*}
\end{corollary}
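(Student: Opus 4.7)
The plan is to start from Corollary \ref{Cor:GenClosure}, which expresses each $\overline{K_n}(x)$ in closed form as $x^{-1}K_n(x) + (x^2-1)\upalpha_K(x)^n$. Multiplying by $y^n$ and summing over $n\geq 0$ splits $\overline{K}(x;y)$ into two independent pieces. The first, $x^{-1}\sum_{n\geq 0}K_n(x)y^n$, is exactly $x^{-1}K(x;y)$ by the definition of the generating function. The second, $(x^2-1)\sum_{n\geq 0}\bigl(y\upalpha_K(x)\bigr)^n$, is a geometric series summing to $\frac{x^2-1}{1-y\upalpha_K(x)}$. Invoking Theorem \ref{Thm:gf} to rewrite $K(x;y) = \frac{x^2}{x - yK(x)}$, and hence $x^{-1}K(x;y) = \frac{x}{x-yK(x)}$, yields the first displayed form.

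For the second form, my strategy is to put both summands over the common denominator $1 - y\upalpha_K(x)$. The key algebraic ingredient is the identity
\[
x^{-1}K(x) = \upalpha_K(x) + x\upbeta_K(x),
\]
which follows from dividing \eqref{eq:closure} through by $x$. This shows that
\[
x - yK(x) = x\bigl(1 - y\upalpha_K(x)\bigr) - x^2 y\upbeta_K(x),
\]
so clearing denominators in $\frac{x^2-1}{1-y\upalpha_K(x)} + \frac{x}{x - yK(x)}$ and then using $K(x;y) = \frac{x^2}{x-yK(x)}$ to re-express the cross term produces exactly $\frac{1}{1-y\upalpha_K(x)}\bigl(x^2 + y\upbeta_K(x)K(x;y)\bigr)$.

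I do not foresee any genuine difficulty: the argument is an elementary manipulation of geometric series together with the structural identity \eqref{eq:closure} relating $K(x)$ to its components $\upalpha_K$ and $\upbeta_K$. The only point requiring care is bookkeeping, namely verifying that the mixed term arising from the common-denominator computation collapses to precisely $y\upbeta_K(x)K(x;y)$ rather than to some more complicated expression, which the identity $x^{-1}K(x) = \upalpha_K(x) + x\upbeta_K(x)$ handles cleanly.
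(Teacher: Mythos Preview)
Your proposal is correct and follows exactly the route the paper intends: the corollary is stated without proof precisely because it is the immediate consequence of summing the closed form in Corollary~\ref{Cor:GenClosure} against $y^n$, together with Theorem~\ref{Thm:gf}. One small remark: the first displayed form in the paper has $K(x;y)$ in the denominator, whereas your derivation (correctly) produces $\dfrac{x}{x-yK(x)}$; this is a typo in the paper, and your version is the one that is actually consistent with both Theorem~\ref{Thm:gf} and the second displayed form.
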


The key to establishing the generating polynomial for the closure of a generated knot is then to identify  the components of that of the generator. The next results are direct application of Corollaries \ref{Cor:GenClosure} and \ref{cor:gpclosure}.

\subsection{Foil knot} 
Let $ \overline{T_n}(x) :=\sum_{k\geq 0}^{}f(n,k)x^k$ denote the generating polynomial for the $ n $-foil knot. 
\begin{theorem}
The generating polynomial for the $ n $-foil knot is given by the recurrence relation
\begin{equation}\label{eq:Fnx1}
\overline{T_n}(x)=\overline{T_{n-1}}(x)+T_{n-1}(x),
\end{equation}
and is expressed by the closed form formula
\begin{equation}\label{eq:Fnx2}
\overline{T_n}(x)=(x+1)^n+x^2-1.
\end{equation}
\end{theorem}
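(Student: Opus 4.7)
The plan is to apply \Cor{Cor:GenClosure} with the generator $K = T_1$, for which we need to identify the components $\upalpha_{T_1}(x)$ and $\upbeta_{T_1}(x)$ appearing in the decomposition $T_1(x) = x^2\upbeta_{T_1}(x) + x\upalpha_{T_1}(x)$.

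First, I would determine these components. Since $\overline{T_1}$ and $T_1$ are equivalent under the $0S^2$ move (see \Figs{Fig:EquivalentKnot}(a)), they share the same generating polynomial. From $T_1(x) = x^2 + x$ displayed in \Figs{Fig:1TwistLoop}, matching this against the closure decomposition $\overline{T_1}(x) = x^2\upalpha_{T_1}(x) + x\upbeta_{T_1}(x)$ forces $\upalpha_{T_1}(x) = 1$ and $\upbeta_{T_1}(x) = 1$. As a sanity check, the consistency identity \eqref{eq:closure} returns $x^2 \cdot 1 + x \cdot 1 = T_1(x)$, as expected.

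Next, I would invoke \Cor{Cor:GenClosure} with $K=T_1$, noting from \Rem{Rem:T2T3} that $(T_1)_n = T_n$, so that
\begin{equation*}
\overline{T_n}(x) = x^{-1}T_n(x) + (x^2-1)\upalpha_{T_1}(x)^n.
\end{equation*}
Substituting the closed form $T_n(x) = x(x+1)^n$ from \eqref{eq:Tnx2} together with $\upalpha_{T_1}(x)=1$ immediately yields \eqref{eq:Fnx2}. The recurrence \eqref{eq:Fnx1} then follows by the one-line computation
\begin{equation*}
\overline{T_n}(x) - \overline{T_{n-1}}(x) = (x+1)^n - (x+1)^{n-1} = x(x+1)^{n-1} = T_{n-1}(x),
\end{equation*}
which rearranges to the claimed recurrence.

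There is no real obstacle here: once we recognise that the generator has trivial components $\upalpha_{T_1}=\upbeta_{T_1}=1$, both statements become direct specialisations of \Cor{Cor:GenClosure}. The only subtle point worth stressing is the justification for $\upalpha_{T_1} = \upbeta_{T_1} = 1$, which can alternatively be read off directly from the state diagrams of $\overline{T_1}$ rather than through the $0S^2$-equivalence argument; either route is routine.
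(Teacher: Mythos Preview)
Your proposal is correct and follows essentially the same route as the paper: identify $\upalpha_{T_1}=\upbeta_{T_1}=1$ and then invoke \Cor{Cor:GenClosure}. The only cosmetic differences are that the paper reads the components directly off the state diagram of $\overline{T_1}$ (its \Figs{Fig:ClosureT1}) rather than via the $0S^2$-equivalence $\overline{T_1}=T_1$, and you obtain the recurrence by subtracting closed forms whereas the paper leaves it implicit in the appeal to \Cor{Cor:GenClosure} (equivalently, \Prop{prop:sumclosure} with $\upalpha_{T_1}=\upbeta_{T_1}=1$).
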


\begin{proof}
By \Figs{Fig:ClosureT1}, we have  $ \upalpha_{T_1}(x)=1 $ and  $ \upbeta_{T_1}(x)=1 $. Next apply \Cor{Cor:GenClosure}.
\begin{figure}[ht]
\centering
\includegraphics[width=0.25\linewidth]{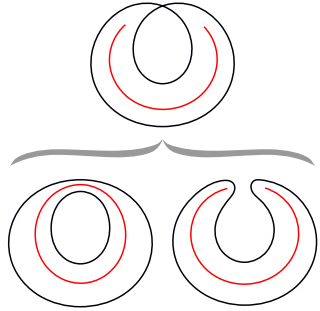}
\caption{The states of the $ 1 $-foil, $ \overline{T_1}(x)=x^2+x $.}
\label{Fig:ClosureT1}
\end{figure}
\end{proof}
\begin{corollary}
The generating function for the  sequence $ \left\{ \overline{T_n} (x)\right\}_{n\geq0} $ is given by
\begin{equation}
\overline{T}(x;y):=\dfrac{1}{1-y}\left(x^2+\dfrac{yx}{1-y(x+1)}\right).
\end{equation}
\end{corollary}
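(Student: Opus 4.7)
The plan is to apply \Cor{cor:gpclosure} directly with $K=T$. The only data needed are (i) the components $\upalpha_T(x)$ and $\upbeta_T(x)$ of the generator polynomial $T_1(x)$, and (ii) the generating function $T(x;y)$ itself; both are essentially already on hand from the preceding results.

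First I would read off the components from \Figs{Fig:ClosureT1}, which was used in the proof of the preceding theorem. The two states of $\overline{T_1}$ contribute $x^2$ and $x$ respectively, so the decomposition $\overline{K}(x)=x^2\upalpha_K(x)+x\upbeta_K(x)$ of \eqref{eq:components} forces $\upalpha_T(x)=\upbeta_T(x)=1$. Equivalently, this is already implicit in the closed form \eqref{eq:Fnx2} via \Cor{Cor:GenClosure}, since $\overline{T_n}(x)=x^{-1}T_n(x)+(x^2-1)\upalpha_T(x)^n$ and \eqref{eq:Fnx2} reads $(x+1)^n+(x^2-1)\cdot 1^n$.

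Next I would quote $T(x;y)=\dfrac{x}{1-y(x+1)}$ from \Subsec{subsec:twist-loop} and substitute into the second formula of \Cor{cor:gpclosure}:
$$\overline{T}(x;y)=\dfrac{1}{1-y\upalpha_T(x)}\Big(x^2+y\upbeta_T(x)\,T(x;y)\Big)=\dfrac{1}{1-y}\left(x^2+\dfrac{yx}{1-y(x+1)}\right),$$
which is exactly the claimed expression. This is essentially a one-line substitution, so there is no genuine obstacle.

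As an independent sanity check, one can instead sum the closed form \eqref{eq:Fnx2} termwise against $y^n$ to get
$$\sum_{n\geq 0}\big((x+1)^n+x^2-1\big)y^n=\dfrac{1}{1-y(x+1)}+\dfrac{x^2-1}{1-y},$$
place this over the common denominator $(1-y)(1-y(x+1))$, and verify via the elementary identity $1+(x+1)^2(x-1)=x(x^2+x-1)$ that the numerator matches the one produced by combining the two terms of the claimed form. The only mildly finicky step in the whole argument is this last algebraic matching, which is still entirely routine.
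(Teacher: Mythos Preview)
Your proposal is correct and follows exactly the route the paper intends: the corollary is stated without proof because it is an immediate instance of \Cor{cor:gpclosure} with the components $\upalpha_{T_1}(x)=\upbeta_{T_1}(x)=1$ already identified in the proof of the preceding theorem and with $T(x;y)$ taken from \Subsec{subsec:twist-loop}. Your additional sanity check via direct summation of \eqref{eq:Fnx2} is a harmless (and correct) extra verification.
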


By \eqref{eq:Fnx1} and \eqref{eq:Fnx2}, the coefficients $ f(n,k) $ must satisfy the following recurrence relation:
\begin{equation}
\begin{cases}
f(0,2)=1;\\ 
f(n,0)=0,\ f(n,1)=n,  & n\geq 0;\\
f(n,k)=f(n-1,k)+t(n-1,k),	&k\geq 1,\ n\geq 0.
\end{cases}
\end{equation}
Hence we have \Tabs{Tab:gpfoil} for $ 0\leq n\leq 12 $ and $ 0\leq k\leq 12$.
Also, referring back to \eqref{eq:tnk}, we obtain the following sequences:
\begin{itemize}
\item   $ f(n,1)=n $, the nonnegative integers \cite[\seqnum{A001477}]{Sloane};
\item  $ f(n,2)=\binom{n}{2}+1$ \cite[\seqnum{A152947}]{Sloane};
\item if $ k\geq 3 $, then we obtain the usual  Pascal's triangle \cite[\seqnum{A007318}]{Sloane}.
\end{itemize}

\begin{table}[ht]
\centering
$\begin{array}{c|rrrrrrrrrrrrr}
n\ \backslash\ k	&0	&1	&2	&3	&4	&5	&6	&7	&8	&9	&10\\
\midrule
0	&0	&0	&1	&	&	&	&	&	&	&	&	\\
1	&0	&1	&1	&	&	&	&	&	&	&	&	\\
2	&0	&2	&2	&	&	&	&	&	&	&	&	\\
3	&0	&3	&4	&1	&	&	&	&	&	&	&	\\
4	&0	&4	&7	&4	&1	&	&	&	&	&	&	\\
5	&0	&5	&11	&10	&5	&1	&	&	&	&	&	\\
6	&0	&6	&16	&20	&15	&6	&1	&	&	&	&	\\
7	&0	&7	&22	&35	&35	&21	&7	&1	&	&	&	\\
8	&0	&8	&29	&56	&70	&56	&28	&8	&1	&	&	\\
9	&0	&9	&37	&84	&126	&126	&84	&36	&9	&1	&	\\
10	&0	&10	&46	&120	&210	&252	&210	&120	&45	&10	&1\\
\end{array}$
\caption{Values of $ f(n,k)$ for $ 0\leq n\leq 10 $ and $ 0\leq k\leq 10$.}
\label{Tab:gpfoil}
\end{table}

\begin{remark}
From \Tabs{Tab:twistloop} and \Tabs{Tab:gpfoil} we have 
\begin{equation*}
T_1(x)=\overline{T_1}(x)=x^2+x,
\end{equation*}
from \Tabs{tab:link} and \Tabs{Tab:gpfoil} we read
\begin{equation*}
L_1(x)=\overline{T_2}(x)=2x^2+2x,
\end{equation*}
and finally from \Tabs{Tab:hitch} and \Tabs{Tab:gpfoil} we read
\begin{equation*}
H_1(x)=\overline{T_3}(x)=x^3+4x^2+3x.
\end{equation*}
\end{remark}

\begin{remark}
Following \Rem{Rem:T2T3}, we have the corresponding results on the $ 2n $-foil knot and the $ 3n $-foil knot, namely $ \overline{\left(T_2\right)_n}(x)=\overline{\left(T_{2n}\right)}(x) $ and $ \overline{\left(T_3\right)_n}(x)=\overline{\left(T_{3n}\right)}(x) $.
\paragraph{\boldmath$2n $-foil knot:} let  $ \overline{T_{2n}}(x):=\sum_{k\geq0}^{}f_{2}(n,k)x^k $.
\begin{enumerate}
\item Generating polynomial:  
\begin{equation}
\overline{T_{2n}}(x)=\left(x^2+2x+1\right)^{n}+x^2-1.
\end{equation}
\item Generating function: 
\begin{equation}
\overline{T_{2}}(x;y):=\dfrac{1}{1-y}\left(x^2+\dfrac{yx(x+2)}{1-y(x^2+2x+1)}\right).
\end{equation}
\item Distribution of $ f_2(n,k) $: see \Tabs{Tab:dank}.
\begin{equation}
\begin{cases}
f_{2}(0,2)=1;\\
f_{2}(n,0)=0,f_{2}(n,1)=2n, & n\geq 0;\\
f_{2}(n,k)=f_{2}(n-1,k)+t_{2}(n-1,k-1)+2t_{2}(n-1,k),	&k\geq 1,\ n\geq 0.
\end{cases} 
\end{equation}

\begin{table}[ht]
\centering
$\begin{array}{c|rrrrrrrrrrrrrrr}
n\ \backslash\ k	&0	&1	&2	&3	&4	&5	&6	&7	&8	&9	&10	&11	&12&13&14\\
\midrule
0	&0	&0	&1	&	&	&	&	&	&	&	&	&	&&&\\
1	&0	&2	&2	&	&	&	&	&	&	&	&	&	&&&\\
2	&0	&4	&7	&4	&1	&	&	&	&	&	&	&	&&&\\
3	&0	&6	&16	&20	&15	&6	&1	&	&	&	&	&	&&&\\
4	&0	&8	&29	&56	&70	&56	&28	&8	&1	&	&	&	&&&\\
5	&0	&10	&46	&120	&210	&252	&210	&120	&45	&10	&1	&	&&&\\
6	&0	&12	&67	&220	&495	&792	&924	&792	&495	&220	&66	&12	&1&&\\
7	&0	&14	&92	&364	&1001	&2002	&3003 & 3432& 	3003 &2002	&1001	&364	&91	&14	&1\\
\end{array}$
\caption{Values of $ f_{2}(n,k)$ for $ 0\leq n\leq 7 $ and $ 0\leq k\leq 14$.}
\label{Tab:dank}
\end{table}
\begin{itemize}
\item   $ f_{2}(n,1)=2n $, the nonnegative even numbers, \cite[\seqnum{A005843}]{Sloane};
\item  $f_{2}(n,2)=2n^2 - n + 1$, the maximum number of regions determined by $ n  $ bent lines \cite[\seqnum{A130883}]{GKP,Sloane};
\item if $ k\geq 3 $, then we obtain  $f_{2}(n,k)=\binom{2n}{k}  $, the even-numbered rows of Pascal's triangle \cite[\seqnum{A034870}]{Sloane}.
\end{itemize}
\end{enumerate}

\paragraph{\boldmath$3n $-foil knot:} let $ \overline{T_{3n}}(x):=\sum_{k\geq0}^{}f_{3}(n,k)x^k $.
\begin{enumerate}
\item Generating polynomial:  
\begin{equation}
\overline{T_{3n}}(x)=\left(x^3+3x^2+3x+1\right)^{n}+x^2-1.
\end{equation}
\item Generating function: 
\begin{equation}
\overline{T_{3}}(x;y):=\dfrac{1}{1-y}\left(x^2+\dfrac{yx(x^2+3x+3)}{1-y(x^3+3x^2+3x+1)}\right).
\end{equation}
\item Distribution of $ f_3(n,k) $: see \Tabs{Tab:ddnk}.
\begin{equation}
\begin{cases}
f_{3}(0,2)=1;\\
f_{3}(n,0)=0,\ f_{3}(n,1)=3n,\ f_{3}(n,2)=\dfrac{9n^2 - 3n + 2}{2}, & n\geq 0;\\
f_{3}(n,k)=f_{3}(n-1,k)+t_3(n-1,k-2)\\
\hphantom{f_3(n,k)=}+3t_3(n-1,k-1)+3t_3(n-1,k),	&k\geq 2,\ n\geq 0.
\end{cases}
\end{equation}

\begin{table}[ht]
\centering
\resizebox{\linewidth}{!}{%
$\begin{array}{c|rrrrrrrrrrrrrrrr}
n\ \backslash\ k	&0	&1	&2	&3	&4	&5	&6	&7	&8	&9	&10	&11	&12&13&14&15\\
\midrule
0	&0	&0	&1	&	&	&	&	&	&	&	&	&	& &&&\\
1	&0	&3	&4	&1	&	&	&	&	&	&	&	&	&&&&\\
2	&0	&6	&16	&20	&15	&6	&1	&	&	&	&	&	&&&&\\
3	&0	&9	&37	&84	&126	&126	&84	&36	&9	&1	&	&	&&&&\\
4	&0	&12	&67	&220	&495	&792	&924	&792	&495	&220	&66	&12	&1&&&\\
5	&0	&15	&106	&455	&1365	&3003	&5005	&6435	&6435	&5005	&3003	&1365	&455 &105&15&1\\
\end{array}$
}
\caption{Values of $ f_{3}(n,k)$ for $ 0\leq n\leq 5 $ and $ 0\leq k\leq 15$.}
\label{Tab:ddnk}
\end{table}

\begin{itemize}
\item   $ f_{3}(n,1)=3n $, the multiples of 3 \cite[\seqnum{A008585}]{Sloane};
\item  $f_{3}(n,2)=\dfrac{9n^2 - 3n + 2}{2}$,  the generalized polygonal numbers \cite[\seqnum{A080855}]{Sloane};
\item  $f_{3}(n,3)=\dfrac{n(3n - 1)(3n - 2)}{2}$,  the dodecahedral numbers \cite[\seqnum{A006566}]{Sloane};
\item  $f_{3}(n,2n+1)=\binom{3n}{n-1}$ \cite[\seqnum{A004319}]{Sloane};
\item in fact, if $ k\geq 3 $, then $ f_3(n,k)=t_3(n,k+1)=\binom{3n}{k} $ \cite[$ \seqnum{A007318}(3n,k) $]{Sloane}.
\end{itemize}
\end{enumerate}
\end{remark}

\subsection{Chain Link}
Let $ \overline{L_n}(x):=\sum_{k\geq 0}^{}c(n,k)x^k$ denote the generating polynomial for the $ n $-chain link. 
\begin{theorem}
The generating polynomial for the $ n $-chain link is given by the recurrence relation
\begin{equation}\label{eq:Cnx1}
\overline{L_n}(x)=(x+2)\overline{L_{n-1}}(x)+L_{n-1}(x),
\end{equation}
and is expressed by the closed form formula
\begin{equation}\label{eq:Cnx2}
\overline{L_n}(x)=(2x+2)^n+(x^2-1)(x+2)^n.
\end{equation}
\end{theorem}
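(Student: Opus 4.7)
The plan is to invoke \Cor{Cor:GenClosure} with $K=L_1$, so the whole task reduces to identifying the two components $\upalpha_{L_1}(x)$ and $\upbeta_{L_1}(x)$ of the generator's polynomial. Once these are in hand, both the recurrence and the closed form fall out by direct substitution.

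To pin down the components, I would first compute $\overline{L_1}(x)$. The quickest route is to use the equivalence $\overline{L_1}=T_2$ recorded in \Figs{Fig:EquivalentKnot}, which together with \eqref{eq:Tnx2} gives
\begin{equation*}
\overline{L_1}(x)=T_2(x)=x(x+1)^2=x^3+2x^2+x.
\end{equation*}
Alternatively one can enumerate the four states of the closure of the Hopf link directly from \Figs{Fig:1LinkStates}, tracking for each which states contribute two ``outer'' circles and which contribute one. Either way, reading \eqref{eq:components} off the grouping $\overline{L_1}(x)=x^2(x+2)+x\cdot 1$ yields $\upalpha_{L_1}(x)=x+2$ and $\upbeta_{L_1}(x)=1$. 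As a consistency check, \eqref{eq:closure} must then reproduce $L_1$: indeed $x^2\cdot 1+x(x+2)=2x^2+2x=L_1(x)$, in agreement with \eqref{Lnx2}.

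With these components, \Prop{prop:sumclosure} applied to $\overline{L_n}=\overline{(L_1\#L_{n-1})}$ gives immediately
\begin{equation*}
\overline{L_n}(x)=\upalpha_{L_1}(x)\overline{L_{n-1}}(x)+\upbeta_{L_1}(x)L_{n-1}(x)=(x+2)\overline{L_{n-1}}(x)+L_{n-1}(x),
\end{equation*}
which is the recurrence \eqref{eq:Cnx1}. For the closed form \eqref{eq:Cnx2}, I substitute into the first expression of \Cor{Cor:GenClosure}:
\begin{equation*}
\overline{L_n}(x)=\bigl(\upalpha_{L_1}(x)+x\upbeta_{L_1}(x)\bigr)^{n}+(x^2-1)\upalpha_{L_1}(x)^{n}=(2x+2)^n+(x^2-1)(x+2)^n.
\end{equation*}

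The only genuine obstacle is extracting $\upalpha_{L_1}(x)$ and $\upbeta_{L_1}(x)$ correctly; the rest is purely algebraic specialization of the general results of \Subsec{Sec:gp2}. Using the $0S^2$ equivalence $\overline{L_1}=T_2$ sidesteps the need for a new state-by-state bookkeeping and seems the cleanest way to produce the components.
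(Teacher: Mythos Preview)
Your proof is correct and follows essentially the same approach as the paper: identify the components $\upalpha_{L_1}(x)=x+2$ and $\upbeta_{L_1}(x)=1$, then invoke \Prop{prop:sumclosure} and \Cor{Cor:GenClosure}. The only minor twist is that the paper reads the components directly from a state enumeration of $\overline{L_1}$, whereas you obtain $\overline{L_1}(x)$ via the $0S^2$ equivalence $\overline{L_1}=T_2$ and then solve the pair \eqref{eq:components}--\eqref{eq:closure}; note that \eqref{eq:components} alone does not determine the grouping, so your ``consistency check'' with \eqref{eq:closure} is in fact what pins the components down uniquely.
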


\begin{proof}
By \Figs{Fig:ClosureL1}  we get $ \upalpha_{L_1}(x)=x+2 $ and  $ \upbeta_{L_1}(x)=1 $. 
\begin{figure}[ht]
\centering
\includegraphics[width=0.5\linewidth]{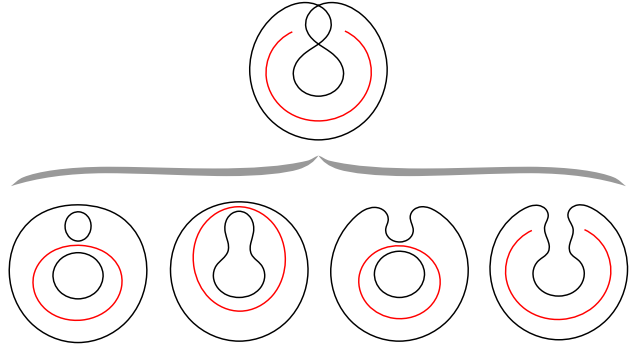}
\caption{The states of the $ 1 $-chain link, $ \overline{L_1}(x)=x^3+2x^2+x $.}
\label{Fig:ClosureL1}
\end{figure}
\end{proof}

\begin{corollary}
The generating function for the  sequence $ \left\{ \overline{L_{n}} (x)\right\}_{n\geq0} $ is given by
\begin{equation}
\overline{L}(x;y):=\dfrac{1}{1-y(x+2)}\left(x^2+\dfrac{yx}{1-y(2x+2)}\right)\cdot
\end{equation}
\end{corollary}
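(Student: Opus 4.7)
The plan is to obtain this as an immediate consequence of \Cor{cor:gpclosure} applied to the generator $ K = L_1 $. The relevant data are already in hand: from the proof of the preceding theorem (\Figs{Fig:ClosureL1}) the components of $ L_1(x) $ are $ \upalpha_{L_1}(x) = x+2 $ and $ \upbeta_{L_1}(x) = 1 $, and from \Subsec{subsec:links} the generating function for the sequence $ \{L_n(x)\}_{n\geq 0} $ is
\[
L(x;y)=\dfrac{x}{1-y(2x+2)}.
\]

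First I would substitute these three ingredients into the second form of the identity in \Cor{cor:gpclosure}, namely
\[
\overline{K}(x;y)=\dfrac{1}{1-y\upalpha_K(x)}\Big(x^2+y\upbeta_K(x)K(x;y)\Big),
\]
with $ K=L_1 $. This substitution produces the claimed formula directly, with no algebraic simplification needed beyond replacing $ \upalpha_{L_1}(x) $, $ \upbeta_{L_1}(x) $ and $ L(x;y) $ by their explicit expressions.

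As a sanity check (not part of the write-up), I would also verify consistency with the closed-form formula \eqref{eq:Cnx2} by summing $ \sum_{n\geq 0}\overline{L_n}(x)\,y^n $ as two geometric series:
\[
\sum_{n\geq 0}\bigl((2x+2)^n+(x^2-1)(x+2)^n\bigr)y^n=\dfrac{1}{1-y(2x+2)}+\dfrac{x^2-1}{1-y(x+2)},
\]
and confirm after placing over a common denominator that this agrees with the expression yielded by \Cor{cor:gpclosure}. There is no real obstacle: the only thing one must be careful about is reading off $ \upalpha_{L_1}$ and $\upbeta_{L_1} $ correctly from \Figs{Fig:ClosureL1}, i.e.\ ensuring $ L_1(x)=x^2\upbeta_{L_1}(x)+x\upalpha_{L_1}(x)=x^2+x(x+2)=2x^2+2x $, which matches \eqref{Lnx2} at $ n=1 $.
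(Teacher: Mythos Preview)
Your proposal is correct and follows exactly the route the paper intends: the corollary is a direct instantiation of \Cor{cor:gpclosure} with $K=L_1$, using the components $\upalpha_{L_1}(x)=x+2$, $\upbeta_{L_1}(x)=1$ identified in the preceding theorem and the formula for $L(x;y)$ from \Subsec{subsec:links}. The paper itself gives no separate proof for this corollary, so your write-up is in full agreement with its approach.
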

Combining \eqref{eq:Cnx1} and \eqref{eq:Cnx2}, we have
\begin{equation}
\begin{cases}
c(0,2)=1;\\
c(n,0)=0,\ c(n,1)=n2^{n-1}, & n\geq 0;\\
c(n,k)=c(n-1,k-1)+2c(n-1,k)+\ell(n-1,k),	&k\geq 1,\ n\geq 0.
\end{cases}
\end{equation}
\Tabs{tab:nlinkchain} gives the values of $ c(n,k) $ for $ 0\leq n\leq 9 $ and $ 0\leq k\leq 11 $ \cite[\seqnum{ A300184}]{Sloane}. Next, we identify the following integer sequences:
\begin{itemize}
\item $ c(n,1)= n 2^{n-1}$ \cite[\seqnum{A001787}]{Sloane}; 	
\item $ c(n,2)= \left(3n^2-3n+8\right)2^{n-3}$ \cite[\seqnum{A300451}]{Sloane}; 
\item $ c(n,n) = 2n(n-1)+2^n-1 $ \cite[\seqnum{A295077}]{Sloane};  
\item $ c(n,n+1)=2n$, the nonnegative even numbers \cite[\seqnum{A005843}]{Sloane};  	
\item $ c(n,n+2) =1$,  the all $ 1 $'s sequence \cite[\seqnum{A000012}]{Sloane}.
\end{itemize}
\begin{table}[ht]
\centering
$\begin{array}{c|rrrrrrrrrrrr}
n\ \backslash\ k		 &0		 &1		 &2		 &3		 &4		 &5		 &6		 &7		 &8		 &9		 &10		 &11\\
\midrule
0	 &0	 &0	 &1	 &	 &	 &	 &	 &	 &	 &	 &	 &\\
1	 &0	 &1	 &2	 &1	 &	 &	 &	 &	 &	 &	 &	 &\\
2	 &0	 &4	 &7	 &4	 &1	 &	 &	 &	 &	 &	 &	 &\\
3	 &0	 &12	 &26	 &19	 &6	 &1	 &	 &	 &	 &	 &	 &\\
4	 &0	 &32	 &88	 &88	 &39	 &8	 &1	 &	 &	 &	 &	 &\\
5	 &0	 &80	 &272	 &360	 &1230	 &71	 &10	 &1	 &	 &	 &	 &\\
6	 &0	 &192	 &784	 &1312	 &1140	 &532	 &123	 &12	 &1	 &	 &	 &\\
7	 &0	 &448	 &2144	 &4368	 &4872	 &3164	 &1162	 &211	 &14	 &1	 &	 &\\
8	 &0	 &1024	 &5632	 &13568	 &18592	 &15680	 &8176	 &2480	 &367	 &16	 &1	 &\\
9	 &0	 &2304	 &14336	 &39936	 &65088	 &67872	 &46368	 &20304	 &5262	 &655	 &18	 &1
\end{array}$
\caption{Values of $ c(n,k) $ for $ 0\leq n\leq 9 $ and $ 0\leq k\leq 11 $.}
\label{tab:nlinkchain}
\end{table}

\begin{remark}
From \Tabs{Tab:twistloop} and \Tabs{tab:nlinkchain} we have 
\begin{equation*}
T_2(x)=\overline{L_1}(x)=x^3+2x^2+x,
\end{equation*}
and from \Tabs{Tab:gpfoil} and \Tabs{tab:nlinkchain} we read
\begin{equation*}
\overline{T_4}(x)=\overline{L_2}(x)=x^4+4x^3+7x^2+4x.
\end{equation*}
\end{remark}

\subsection{Twist bracelet}\label{subsec:twistedbracelet}
Let $ \overline{W_n}(x):=\sum_{k\geq 0}^{}b(n,k)x^k$ denote the generating polynomial for the $ n $-twist bracelet. 
\begin{theorem}
The generating polynomial for the $ n $-twist bracelet is given by the recurrence relation
\begin{equation}\label{eq:Bnx1}
\overline{W_{n}} (x)=(x+2)\overline{W_{n-1}}(x)+(2x+3)W_{n-1}(x)\\
\end{equation}
and is expressed by the closed form formula
\begin{equation}\label{eq:Bnx2}
\overline{W_{n}} (x)=\left(2x^2+4x+2\right)^n+\left(x^2-1\right)(x+2)^n.
\end{equation}
\end{theorem}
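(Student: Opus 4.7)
The proof will follow the exact template established in the two preceding subsections (foil knot and chain link): identify the components $\upalpha_{W_1}(x)$ and $\upbeta_{W_1}(x)$ of the generator $W_1$, then invoke \Cor{Cor:GenClosure} for the closed form and \Prop{prop:sumclosure} applied to $W_n = W_1 \# W_{n-1}$ for the recurrence. Both formulas drop out mechanically once the components of the generator are pinned down.

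First, I would enumerate the $2^3 = 8$ states of the $1$-twist link $W_1$ and sort them according to whether the distinguished pair of arcs marked for the closure lies on a single circle or on two distinct circles (this is exactly the dichotomy formalized by \eqref{eq:components}: the coefficient of $x^2$ is $\upalpha_{W_1}(x)$ and the coefficient of $x$ is $\upbeta_{W_1}(x)$). From the previous subsection we already know $W_1(x) = 2x^3 + 4x^2 + 2x$, which forces the constraint $x^2\upbeta_{W_1}(x) + x\upalpha_{W_1}(x) = 2x^3 + 4x^2 + 2x$, i.e.\ $\upalpha_{W_1}(x) + x\upbeta_{W_1}(x) = 2x^2 + 4x + 2$. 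A diagrammatic inspection (analogous to \Figs{Fig:ClosureL1}) should yield
\[
\upalpha_{W_1}(x) = x + 2, \qquad \upbeta_{W_1}(x) = 2x + 3,
\]
and the identity above is easily checked: $(x+2) + x(2x+3) = 2x^2 + 4x + 2$, as required.

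With these components in hand, \Prop{prop:sumclosure} applied to $W_n = W_1 \# W_{n-1}$ gives
\[
\overline{W_n}(x) = \upalpha_{W_1}(x)\,\overline{W_{n-1}}(x) + \upbeta_{W_1}(x)\,W_{n-1}(x) = (x+2)\,\overline{W_{n-1}}(x) + (2x+3)\,W_{n-1}(x),
\]
which is precisely \eqref{eq:Bnx1}. For the closed form \eqref{eq:Bnx2}, I would substitute directly into the second expression of \Cor{Cor:GenClosure},
\[
\overline{W_n}(x) = x^{-1}W_n(x) + (x^2-1)\upalpha_{W_1}(x)^n,
\]
using $W_n(x) = x(2x^2+4x+2)^n$ from the previous theorem and $\upalpha_{W_1}(x) = x+2$. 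This yields $\overline{W_n}(x) = (2x^2+4x+2)^n + (x^2-1)(x+2)^n$ immediately.

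The main obstacle is the first step: correctly reading off $\upalpha_{W_1}(x)$ and $\upbeta_{W_1}(x)$ from the state diagrams of a $3$-crossing shadow. This is a finite combinatorial check but needs a clean figure, since missorting even one state would falsify both the algebraic constraint $\upalpha_{W_1}(x) + x\upbeta_{W_1}(x) = x^{-1}W_1(x)$ and the base case $\overline{W_1}(x) = x^2(x+2) + x(2x+3) = x^3 + 4x^2 + 3x$, which (reassuringly) agrees with $H_1(x) = O_1(x)$ as anticipated by the equivalence $\overline{W_1} = O_1 = H_1$ noted in \Figs{Fig:EquivalentKnot}. Once the base case is secured, everything else is a direct appeal to results already proved.
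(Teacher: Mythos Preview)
Your proposal is correct and follows exactly the paper's approach: identify the components $\upalpha_{W_1}(x)=x+2$ and $\upbeta_{W_1}(x)=2x+3$ from the state diagram of $\overline{W_1}$, then apply \Prop{prop:sumclosure} and \Cor{Cor:GenClosure}. The paper's proof is in fact terser than yours---it merely records the two components and refers to a figure---so your version is a faithful, slightly expanded rendition of the same argument.
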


\begin{proof}
We have  $ \upalpha_{W_1}(x)=x+2 $ and  $ \upbeta_{W_1}(x)=2x+3 $, see \Figs{Fig:ClosureW1}.
\begin{figure}[ht]
\centering
\includegraphics[width=.9\linewidth]{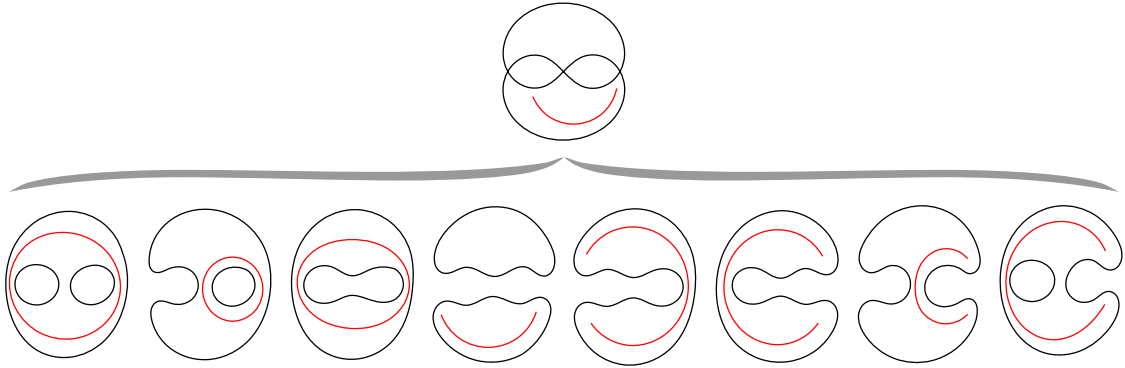}
\caption{The states of the  $ 1 $-twist bracelet, $ \overline{W_{1}}(x)=x^3+4x^2+3x $.}
\label{Fig:ClosureW1}
\end{figure}
\end{proof}

\begin{corollary}
The generating function for the  sequence $ \left\{ \overline{W_{n}} (x)\right\}_{n\geq0} $ is given by
\begin{equation*}
\overline{W}(x;y):=\dfrac{1}{1-y(x+2)}\left(x^2+\dfrac{yx(2x+3)}{1-y(2x^2+4x+2)}\right).
\end{equation*}
\end{corollary}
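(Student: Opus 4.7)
The plan is to invoke \Cor{cor:gpclosure} directly with the generator $K=W_1$, once the two ingredients it requires have been assembled. First, from the preceding theorem (whose proof reads the components off of \Figs{Fig:ClosureW1}) I already have
\[
\upalpha_{W_1}(x)=x+2,\qquad \upbeta_{W_1}(x)=2x+3.
\]
Second, I need the ordinary generating function $W(x;y):=\sum_{n\geq 0}W_n(x)y^n$. Because the closed form \eqref{eq:Wnx2} gives $W_n(x)=x(2x^2+4x+2)^n$, this series is a plain geometric one:
\[
W(x;y)=\sum_{n\geq 0} x\left(2x^2+4x+2\right)^n y^n=\dfrac{x}{1-y(2x^2+4x+2)},
\]
which is the generating function for $\{W_n(x)\}$ associated (via \Thm{Thm:gf}) with the generator $W_1$.

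With these two pieces in hand, I would substitute them into the second form of \Cor{cor:gpclosure},
\[
\overline{K}(x;y)=\dfrac{1}{1-y\upalpha_K(x)}\Big(x^2+y\upbeta_K(x)K(x;y)\Big),
\]
taking $K=W_1$. The substitution is purely mechanical: the factor $1-y\upalpha_{W_1}(x)=1-y(x+2)$ produces the outer denominator, while $y\upbeta_{W_1}(x)W(x;y)=yx(2x+3)/\bigl(1-y(2x^2+4x+2)\bigr)$ contributes the second summand in the parentheses. Collecting the pieces yields the asserted expression for $\overline{W}(x;y)$.

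There is no genuine obstacle here, since \Cor{cor:gpclosure} has already done the combinatorial work; the only point that deserves a brief mention is the justification that $W_n$ really is the iterated connected sum of the generator $W_1$ with itself, which is visible from $W_n(x)=x\bigl(x^{-1}W_1(x)\bigr)^n$ via \Cor{cor:genpoly} and ensures that the hypotheses of \Cor{cor:gpclosure} apply. The rest is algebraic bookkeeping.
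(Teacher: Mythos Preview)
Your proposal is correct and is exactly the approach the paper intends: the corollary is stated as a direct application of \Cor{cor:gpclosure}, and you carry this out by inserting the components $\upalpha_{W_1}(x)=x+2$, $\upbeta_{W_1}(x)=2x+3$ together with the already-established $W(x;y)=x/\bigl(1-y(2x^2+4x+2)\bigr)$ into the second displayed form of that corollary. There is nothing to add; the paper itself gives no separate proof beyond the blanket remark that all such corollaries follow from \Cor{Cor:GenClosure} and \Cor{cor:gpclosure}.
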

By \eqref{eq:wnk}, \eqref{eq:Bnx1} and \eqref{eq:Bnx2}   we have the following recurrence
\begin{equation}
\begin{cases}
b(0,2)=1;\\
b(n,0)=0,\  b(n,1)= 3n2^{n-1}, & n\geq 0;\\
b(n,k)=b(n-1,k-1)+2b(n-1,k)\\
\hphantom{b(n,k)=}+2w(n-1,k-1)+3w(n-1,k),	&k\geq 1,\ n\geq 0.
\end{cases}
\end{equation}
\Tabs{Tab:Bracelet} gives some of the values of $ b(n,k) $. We collect the following sequences:
\begin{itemize}
\item $ b(n,1)=3n2^{n-1} $ \cite[\seqnum{A167667}]{Sloane};
\item $ b(n,2n) = 2^n $ except $ b(1,2) = 4 $ and $ b(2,4) = 5 $, the  independence number of Keller graphs \cite[\seqnum{A258935}]{Sloane}.	 
\end{itemize}
\begin{table}[ht]
\centering
\resizebox{\linewidth}{!}{%
$\begin{array}{c|rrrrrrrrrrrrr}
n\ \backslash\ k		 &0		 &1		 &2		 &3		 &4		 &5		 &6		 &7		 &8		 &9		 &10		 &11	 &12\\
\midrule
0	 &0	 &0	 &1	 &	 &	 &	 &	 &	 &	 &	 &	 &\\
1	 &0	 &3	 &4	 &1	 &	 &	 &	 &	 &	 &	 &	 &\\
2	 &0	 &12	 &27	 &20	 &5	 &	 &	 &	 &	 &	 &	 &\\
3	 &0	 &36	 &122	 &171	 &126	 &49	 &8	 &	 &	 &	 &	 &\\
5	 &0	 &96	 &440	 &920	 &1143	 &904	 &449	 &128	 &16	 &	 &	 &\\
6	 &0	 &240	 &1392	 &3880	 &6790	 &8103	 &6730	 &3841	 &1440	 &320	 &32	 &\\
7	 &0	 &576	 &4048	 &14112	 &31860	 &50836	 &59195	 &50700	 &31681	 &14080	 &4224	 &768	 &64
\end{array}$}
\caption{Values of $ b(n,k) $ for $ 0\leq n\leq 7 $ and $ 0\leq k\leq 12 $.}
\label{Tab:Bracelet}
\end{table}

\begin{remark}
From \Tabs{Tab:hitch}, \Tabs{Tab:gpfoil} and \Tabs{Tab:Bracelet} we have 
\begin{equation*}
H_1(x)=\overline{T_3}(x)=\overline{W_1}(x)=x^3+4x^2+3x.
\end{equation*}
\end{remark}

\subsection{Ringbolt hitching}
Let $ \overline{H_n}(x):=\sum_{k\geq 0}^{}r(n,k)x^k$ denote the generating polynomial for the $ n $-ringbolt hitching. 

\begin{theorem}
The generating polynomial for the $ n $-ringbolt hitching knot is given by the recurrence relation
\begin{equation}\label{Eq:Rnx1}
\overline{H_n}(x)=(2x+3)\overline{H_{n-1}}(x)+(x+2)H_{n-1}(x).
\end{equation}
and is expressed by the closed form formula
\begin{equation}\label{Eq:Rnx2}
\overline{H_n}(x)=\left(x^2+4x+3\right)^n+\left(x^2-1\right)(2x+3)^n.
\end{equation}
\end{theorem}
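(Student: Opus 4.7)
The plan is to apply \Cor{Cor:GenClosure} with the generator $K = H_1$, whose generating polynomial $H_1(x) = x^3+4x^2+3x$ is recorded in \eqref{eq:Hnx2}. The only non-mechanical step is to identify the components $\upalpha_{H_1}(x)$ and $\upbeta_{H_1}(x)$ dictated by the decomposition \eqref{eq:closure}, namely $H_1(x) = x^2\upbeta_{H_1}(x) + x\upalpha_{H_1}(x)$.

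To pin these components down, I would enumerate the eight states of the $1$-hitch (already pictured in \Figs{Fig:TrefoilStates}) and, following the rationale behind \eqref{eq:components}--\eqref{eq:closure}, record for each state whether the pair of arcs singled out for the closure belongs to a single circle or to two different ones. Equivalently, since \Figs{Fig:EquivalentKnot} identifies $\overline{H_1}$ with the $1$-twist link $W_1$, one may exploit $\overline{H_1}(x) = W_1(x) = 2x^3 + 4x^2 + 2x$ from \Sec{subsec:twisted-link} together with $H_1(x) = x^3 + 4x^2 + 3x$, and read the components off the linear system \eqref{eq:components}--\eqref{eq:closure}. Either route yields $\upalpha_{H_1}(x) = 2x+3$ and $\upbeta_{H_1}(x) = x+2$.

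With these in hand, the recurrence \eqref{Eq:Rnx1} is an immediate instance of \Prop{prop:sumclosure} applied to $\overline{H_n} = \overline{(H_1 \# H_{n-1})}$, while the closed form \eqref{Eq:Rnx2} follows from the second identity of \Cor{Cor:GenClosure} after substituting $H_n(x) = x(x^2+4x+3)^n$ from \eqref{eq:Hnx2}. The main obstacle, as in earlier subsections, is simply the correct extraction of $\upalpha_{H_1}$ and $\upbeta_{H_1}$: the symmetric but swapped appearance of these components in \eqref{eq:components} and \eqref{eq:closure} makes it easy to interchange them accidentally, in which case one obtains $W_1(x)$ instead of $\overline{H_1}(x)$ (or vice versa) at the base step and the induction propagates the error.
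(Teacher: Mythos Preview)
Your proposal is correct and follows the same route as the paper: identify the components $\upalpha_{H_1}(x)=2x+3$, $\upbeta_{H_1}(x)=x+2$ and then invoke \Prop{prop:sumclosure} and \Cor{Cor:GenClosure}. The paper obtains the components by directly enumerating the states of $\overline{H_1}$ (your first route); your second route, solving the linear system \eqref{eq:components}--\eqref{eq:closure} using the already established $\overline{H_1}(x)=W_1(x)=2x^3+4x^2+2x$, is a tidy shortcut but not a different argument.
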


\begin{proof}
Proceeding with the usual fashion we obtain  $ \upalpha_{H_1}(x)=2x+3 $ and  $ \upbeta_{H_1}(x)=x+2 $, see \Figs{Fig:ClosedH1}. 
\begin{figure}[ht]
\centering
\includegraphics[width=.9\linewidth]{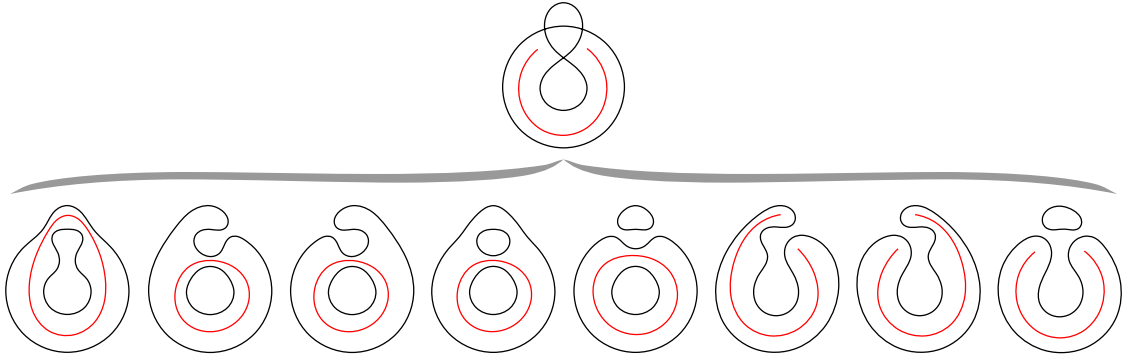}
\caption{The states of the $ 1 $-ringbolt hitching, $\overline{H_1}(x)=2x^3+4x^2+2x $.}
\label{Fig:ClosedH1}
\end{figure}
\end{proof}

\begin{corollary}
The generating function for the  sequence $ \left\{\overline{H_n}(x)\right\}_{n\geq0} $ is given by
\begin{equation}
\overline{H}(x;y):=\dfrac{1}{1-y(2x+3)}\left(x^2+\dfrac{yx(x+2)}{1-y(x^2+4x+3)}\right)
\end{equation}
\end{corollary}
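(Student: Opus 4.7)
The plan is to obtain the generating function by a direct substitution into the general formula of \Cor{cor:gpclosure}, using the ingredients that have already been established in the surrounding material. I do not expect any nontrivial obstacle here; the statement is a routine consequence of earlier results, and the work consists of identifying the right pieces and simplifying.

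First, I would recall that \Cor{cor:gpclosure} asserts
\[
\overline{K}(x;y)=\dfrac{1}{1-y\upalpha_K(x)}\Big(x^2+y\upbeta_K(x)K(x;y)\Big)
\]
for an arbitrary generator $K$, where $\upalpha_K(x)$ and $\upbeta_K(x)$ are the components of $K(x)$ as defined in \eqref{eq:components}--\eqref{eq:closure}. Thus it suffices to specialize to $K=H_1$ and plug in the correct data.

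The components of $H_1(x)$ were already identified in the proof of the preceding theorem (see \Figs{Fig:ClosedH1}), namely $\upalpha_{H_1}(x)=2x+3$ and $\upbeta_{H_1}(x)=x+2$. The remaining ingredient is the generating function for the sequence $\{H_n(x)\}_{n\geq 0}$ itself, which has been computed in \Subsec{subsec:hitch-knot} to be
\[
H(x;y)=\dfrac{x}{1-y\left(x^2+4x+3\right)}.
\]

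Substituting these three quantities into the formula from \Cor{cor:gpclosure} yields
\[
\overline{H}(x;y)=\dfrac{1}{1-y(2x+3)}\left(x^2+y(x+2)\cdot\dfrac{x}{1-y\left(x^2+4x+3\right)}\right),
\]
which is exactly the claimed expression. As a sanity check one may equivalently verify the formula by summing the closed form \eqref{Eq:Rnx2} directly as two geometric series, $\sum_{n\geq 0}(x^2+4x+3)^n y^n+(x^2-1)\sum_{n\geq 0}(2x+3)^n y^n$, and confirming that combining the resulting rational functions produces the same expression after clearing the common denominator $(1-y(2x+3))(1-y(x^2+4x+3))$; this cross-check is the only place where a small bit of algebra is needed, but nothing conceptual is at stake.
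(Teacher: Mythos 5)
Your proof is correct and follows exactly the route the paper intends: these closure generating functions are stated as direct applications of \Cor{cor:gpclosure}, and you substitute the components $\upalpha_{H_1}(x)=2x+3$, $\upbeta_{H_1}(x)=x+2$ together with $H(x;y)=x/\bigl(1-y(x^2+4x+3)\bigr)$ into its second formula, obtaining the claimed expression. The geometric-series cross-check from \eqref{Eq:Rnx2} is a valid (and easily verified) confirmation, matching the first formula of \Cor{cor:gpclosure}.
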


Combining \eqref{Eq:Rnx1} with \eqref{Eq:Rnx2}, we obtain the recurrence relation
\begin{equation}
\begin{cases}
r(0,2)=1;\\
r(n,0)=0,\ r(n,1)=2n3^{n-1}, & n\geq 0;\\
r(n,k)=2r(n-1,k-1)+3r(n-1,k)\\
\hphantom{r(n,k)=} +h(n-1,k-1)+2h(n-1,k), &  k\geq 1,\ n\geq 0.
\end{cases}
\end{equation}
We then obtain the values of  $ r(n,k) $ for $ 0\leq n\leq 6 $ and $ 0\leq k\leq 12 $ as given in \Tabs{tab:ringbolt}. We only recognize here the sequence defined by $ r(n,1)=2n3^{n-1}$ \cite[\seqnum{A212697}]{Sloane}.

\begin{table}[ht]
\centering
$\begin{array}{c|rrrrrrrrrrrrr}
n\ \backslash\ k		 &0		 &1		 &2		 &3		 &4		 &5		 &6		 &7		 &8		 &9		 &10		 &11		 &12\\
\midrule
0	 &0	 &0	 &1	 &	 &	 &	 &	 &	 &	 &	 &	 &	 &\\
1	 &0	 &2	 &4	 &2	 &	 &	 &	 &	 &	 &	 &	 &	 &\\
2	 &0	 &12	 &27	 &20	 &5	 &	 &	 &	 &	 &	 &	 &	 &\\
3	 &0	 &54	 &162	 &182	 &93	 &20	 &1	 &	 &	 &	 &	 &	 &\\
4	 &0	 &216	 &837	 &1320	 &1086	 &496	 &124	 &16	 &1	 &	 &	 &	 &\\
5	 &0	 &810	 &3888	 &8010	 &9270	 &6632	 &3050	 &912	 &175	 &20	 &1	 &	 &\\
6	 &0	 &2916	 &16767	 &42876	 &64395	 &63216	 &42732	 &20400	 &6919	 &1640	 &258	 &24	 &1
\end{array}$
\caption{Values of $ r(n,k) $ for $ 0\leq n\leq 6 $ and $ 0\leq k\leq 12 $.}
\label{tab:ringbolt}
\end{table}

\begin{remark}
Reading \Tabs{Tab:Twistlink} and \Tabs{tab:ringbolt} yields 
\begin{equation*}
W_3(x)=\overline{H_1}(x)=2x^3+4x^2+2x.
\end{equation*}
\end{remark}

\subsection{Sinnet of square knotting}
Let $ \overline{O_n}(x):=\sum_{k\geq 0}^{}s(n,k)x^k$ denote the generating polynomial for the $ n $-sinnet of square knotting. 

\begin{theorem}
The generating polynomial for the $ n $-sinnet of square knotting is given by the recurrence relation
\begin{equation}\label{eq:Snx1}
\overline{O_n}(x)=\left(x^2+3x+3\right)\overline{O_{n-1}}(x)+O_{n-1}(x),
\end{equation}
and is expressed by the closed form formula
\begin{equation}\label{eq:Snx2}
\overline{O_n}(x)=\left(x^2+4x+3\right)^n+\left(x^2-1\right)\left(x^2+3x+3\right)^n.
\end{equation}
\end{theorem}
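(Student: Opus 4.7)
The plan is to mirror the structure of the four preceding closure proofs: identify the components $\upalpha_{O_1}(x)$ and $\upbeta_{O_1}(x)$ of the generator $O_1$ by inspecting the state diagrams of $\overline{O_1}$, and then invoke \Cor{Cor:GenClosure} for the closed formula and \Prop{prop:sumclosure} for the recurrence. Comparing the target expression with \Cor{Cor:GenClosure}, which asserts $\overline{K_n}(x) = x^{-1}K_n(x) + (x^2 - 1)\upalpha_K(x)^n$, and using $x^{-1}O_n(x) = (x^2+4x+3)^n$, I expect to find $\upalpha_{O_1}(x) = x^2 + 3x + 3$ and, from the consistency condition $x^{-1}O_1(x) = \upalpha_{O_1}(x) + x\upbeta_{O_1}(x)$, the value $\upbeta_{O_1}(x) = 1$.

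To carry out the identification, I would draw $\overline{O_1}$ (the $1$-sinnet of square knotting, which has three crossings) and enumerate all $2^3 = 8$ resulting states. For each state, I classify it according to the decomposition \eqref{eq:components}: those whose two closure arcs belong to two distinct circles contribute to $x^2\upalpha_{O_1}(x)$, and those whose two closure arcs lie on a single circle contribute to $x\upbeta_{O_1}(x)$. Collecting monomials should yield $\overline{O_1}(x) = x^4 + 3x^3 + 3x^2 + x$, so that $\upalpha_{O_1}(x) = x^2 + 3x + 3$ and $\upbeta_{O_1}(x) = 1$, which I can cross-check against $O_1(x) = x^3 + 4x^2 + 3x$ via \eqref{eq:closure}.

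With the components in hand, the recurrence \eqref{eq:Snx1} follows immediately by writing $O_n = O \# O_{n-1}$ and applying \Prop{prop:sumclosure}, giving
\[
\overline{O_n}(x) = \upalpha_{O_1}(x)\overline{O_{n-1}}(x) + \upbeta_{O_1}(x) O_{n-1}(x) = (x^2+3x+3)\overline{O_{n-1}}(x) + O_{n-1}(x),
\]
and the closed form \eqref{eq:Snx2} follows directly from \Cor{Cor:GenClosure} together with the closed-form expression for $O_n(x)$ from \Sec{subsec:overhand-knot}. The only delicate step is the state enumeration of $\overline{O_1}$: although $O_1$ and $H_1$ are equivalent under the $0S^2$ move (and therefore share a generating polynomial), their closures $\overline{O_1}$ and $\overline{H_1}$ need not be equivalent, and indeed the comparison of \Prop{prop:sumclosure} with the earlier ringbolt hitching case shows the roles of $\upalpha$ and $\upbeta$ are swapped. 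Consequently, the main obstacle is a careful visual analysis of how the two closure strands traverse the three twisted crossings in the overhand representation, so that each of the eight states is correctly assigned to the $\upalpha$ or the $\upbeta$ bucket rather than being conflated with the hitch-knot enumeration.
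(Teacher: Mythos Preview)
Your proposal is correct and follows essentially the same route as the paper: the paper's proof simply reads off $\upalpha_{O_1}(x)=x^2+3x+3$ and $\upbeta_{O_1}(x)=1$ from the eight state diagrams of $\overline{O_1}$ (yielding $\overline{O_1}(x)=x^4+3x^3+3x^2+x$) and then invokes \Cor{Cor:GenClosure}. Your additional remarks---the cross-check via \eqref{eq:closure}, the explicit derivation of the recurrence from \Prop{prop:sumclosure}, and the caution that $\overline{O_1}\neq\overline{H_1}$ despite $O_1$ and $H_1$ being $0S^2$-equivalent---are all sound and make the argument more self-contained than the paper's terse version.
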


\begin{proof}
Again, the usual routine allows us to give both expressions of $ O_n(x) $. By \Figs{Fig:ClosureO1}, we have  $ \upalpha_{O_1}(x)=x^2+3x+3 $ and $ \upbeta_{O_1}(x)=1 $. We conclude by \Cor{Cor:GenClosure}.
\begin{figure}[ht]
\centering
\includegraphics[width=.9\linewidth]{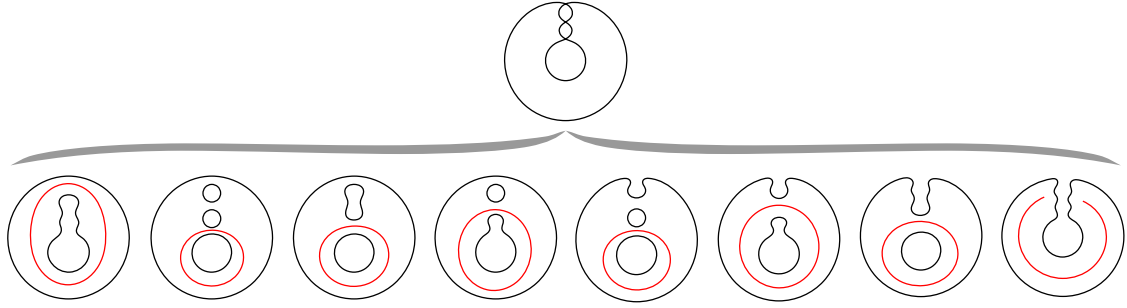}
\caption{The states of the $ 1 $-sinnet of square knotting, $ \overline{O_1}(x)=x^4+3x^3+3x^2+x $.}
\label{Fig:ClosureO1}
\end{figure}
\end{proof}

\begin{corollary}
The generating function for the  sequence $ \left\{\overline{O_n}(x)\right\}_{n\geq0} $ is given by
\begin{equation}
\overline{O}(x;y):=\dfrac{1}{1-y(x^2+3x+3)}\left(x^2+\dfrac{yx}{1-y(x^2+4x+3)}\right).
\end{equation}
\end{corollary}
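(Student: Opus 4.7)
The plan is to derive the stated bivariate generating function by a direct substitution into the general formula supplied by \Cor{cor:gpclosure}. That corollary tells us that for any generator knot $K$ with component polynomials $\upalpha_K(x)$ and $\upbeta_K(x)$, the generating function of the sequence of closures satisfies
\[
\overline{K}(x;y)=\dfrac{1}{1-y\upalpha_K(x)}\Big(x^{2}+y\upbeta_K(x)\,K(x;y)\Big),
\]
so all I need to do is instantiate this with $K=O_1$ and recognize each factor.

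First I would carry over the component polynomials identified in the proof of the preceding theorem, namely $\upalpha_{O_1}(x)=x^{2}+3x+3$ and $\upbeta_{O_1}(x)=1$, extracted by inspection of \Figs{Fig:ClosureO1}. Next, I would observe that the univariate generating function $O(x;y):=\sum_{n\geq0}O_n(x)y^n$ is immediate from the closed form $O_n(x)=x(x^{2}+4x+3)^n$ supplied by the previous theorem: summing the geometric series in $y$ gives
\[
O(x;y)=\sum_{n\geq0}x\,(x^{2}+4x+3)^n\,y^n=\dfrac{x}{1-y(x^{2}+4x+3)}.
\]
Substituting $\upalpha_{O_1}(x)$, $\upbeta_{O_1}(x)$, and this expression for $O(x;y)$ into the template above yields exactly
\[
\overline{O}(x;y)=\dfrac{1}{1-y(x^{2}+3x+3)}\left(x^{2}+\dfrac{yx}{1-y(x^{2}+4x+3)}\right),
\]
which is the claim.

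Since the corollary does the heavy lifting, there is no genuine obstacle: the proof reduces to recording the two components of $O_1(x)$ and performing the geometric sum. If anything, the most delicate bookkeeping is simply verifying that the $\upalpha_{O_1}$ and $\upbeta_{O_1}$ extracted from \Figs{Fig:ClosureO1} indeed satisfy $x^{2}\upbeta_{O_1}(x)+x\upalpha_{O_1}(x)=O_1(x)=x^{3}+4x^{2}+3x$, which one checks at once with $\upalpha_{O_1}(x)=x^{2}+3x+3$ and $\upbeta_{O_1}(x)=1$. Once this compatibility is noted, the displayed identity follows by one line of algebra.
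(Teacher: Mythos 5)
Your proposal is correct and matches the paper's own route: the paper treats this corollary as a direct application of \Cor{cor:gpclosure} with the components $\upalpha_{O_1}(x)=x^2+3x+3$ and $\upbeta_{O_1}(x)=1$ read off from \Figs{Fig:ClosureO1}, exactly as you do. Your added sanity check that $x^2\upbeta_{O_1}(x)+x\upalpha_{O_1}(x)=O_1(x)$ is a nice touch but not a deviation.
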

Combining  \eqref{eq:hnk}, \eqref{eq:Snx1} and \eqref{eq:Snx2}  yields
\begin{equation}
\begin{cases}
s(0,2)=1;\\
s(n,0)=0,\  s(n,1)=n3^{n-1},\ s(n,2)=3^n+7\dfrac{n(n-1)}{2}3^{n-2}, & n\geq 0;\\
s(n,k)=s(n-1,k-2)+3s(n-1,k-1)+3s(n-1,k)\\
\hphantom{s(n,k)=}+h(n-1,k),  &k\geq 2,\ n\geq 0.
\end{cases}
\end{equation}
In \Tabs{tab:sinnet}, we list the values of $ s(n,k) $ for small $ n $ and $ k $. We recognize the following sequences:
\begin{itemize}
\item $ s(n,1) = n3^{n-1} $ \cite[\seqnum{A027471}]{Sloane};
\item $ s(n,2n+1) =3n$,  the multiples of $ 3 $ \cite[\seqnum{A008585}]{Sloane};
\item $ s(n,2n-1) =\dfrac{n(3n - 1)(3n - 2)}{2}$,  the dodecahedral numbers \cite[\seqnum{A006566}]{Sloane};
\item $ s(n,2n) =\dfrac{3n(3n-1)}{2}$, three times pentagonal numbers \cite[\seqnum{A062741}]{Sloane}.
\end{itemize}

\begin{table}[ht]
\centering
$\begin{array}{c|rrrrrrrrrrrrr}
n\ \backslash\ k		 &0		 &1		 &2		 &3		 &4		 &5		 &6		 &7		 &8		 &9		 &10		 &11		 &12\\
\midrule
0	 &0	 &0	 &1	 &	 &	 &	 &	 &	 &	 &	 &	 &	 &\\
1	 &0	 &1	 &3	 &3	 &1	 &	 &	 &	 &	 &	 &	 &	 &\\
2	 &0	 &6	 &16	 &20	 &15	 &6	 &1	 &	 &	 &	 &	 &	 &\\
3	 &0	 &27	 &90	 &136	 &129	 &84	 &36	 &9	 &1	 &	 &	 &	 &\\
4	 &0	 &108	 &459	 &876	 &1021	 &832	 &501	 &220	 &66	 &12	 &1	 &	 &\\
5	 &0	 &405	 &2133	 &5085	 &7350	 &7321	 &5420	 &3103	 &1375	 &455	 &105	 &15	 &1
\end{array}$
\caption{Values of $ s(n,k) $ for $ 0\leq n\leq 5 $ and $ 0\leq k\leq 12 $.}
\label{tab:sinnet}
\end{table}

\begin{remark}
From \Tabs{Tab:twistloop} and \Tabs{tab:sinnet} we have 
\begin{equation*}
T_3(x)=\overline{O_1}(x)=x^4+3x^3+3x^2+x,
\end{equation*}
and from \Tabs{Tab:gpfoil} and \Tabs{tab:sinnet} we have 
\begin{equation*}
\overline{T_6}(x)=\overline{O_2}(x)=x^6+6x^5+15x^4+20x^3+16x^2+6x.
\end{equation*}
\end{remark}

\subsection{Twist knot}
A \textit{twist knot} is a knot  obtained by repeatedly twisting a closed loop, and then linking the ends together \cite[p.\ 57]{JH}. We let $ \mathcal{T}_n $ denote a  \textit{$ n$-twist knot}, i.e., a twist knot of  $ n $  half twists \cite{Ramaharo}. Examples of twist knots are given in \Figs{fig:twist-knot}.
\begin{figure}[ht]
\centering
\includegraphics[width=1\linewidth]{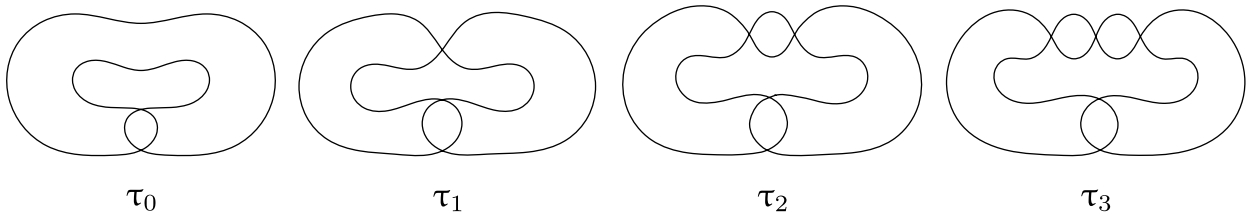}
\caption{$n $-twist knots, $ n=0,1,2,3 $.}
\label{fig:twist-knot}
\end{figure}

Let then $ \mathcal{T}_{n}(x):=\sum_{k\geq 0}^{}\tau(n,k)x^k$ denote the generating polynomial for the $ n $-twist knot. 
\begin{theorem}
The generating polynomial for the $ n $-twist knot is  given by the relation
\begin{equation}
\mathcal{T}_n(x)=(x+2)\overline{T_n}(x)+T_n(x),
\end{equation}
and has the following closed form
\begin{equation}
\mathcal{T}_n(x)=2(x+1)^{n+1}+x^3+2x^2-x-2.
\end{equation}
\end{theorem}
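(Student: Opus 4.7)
The plan is to identify $\mathcal{T}_n$ as the closure of a connected sum whose components are already known, so that \Prop{prop:sumclosure} applies directly. Inspecting \Figs{fig:twist-knot}, the clasping region that links the two ends of the twisted band consists of exactly two crossings arranged as a Hopf link, while the twisted band itself is an $n$-twist loop. This should give the structural identity
\begin{equation*}
\mathcal{T}_n \;=\; \overline{L_1 \# T_n}.
\end{equation*}

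Once this identification is made, the first displayed equality of the theorem will follow immediately from \Prop{prop:sumclosure} applied with $K=L_1$ and $K'=T_n$:
\begin{equation*}
\mathcal{T}_n(x) \;=\; \overline{(L_1 \# T_n)}(x) \;=\; \upalpha_{L_1}(x)\,\overline{T_n}(x) + \upbeta_{L_1}(x)\,T_n(x).
\end{equation*}
The components of $L_1(x)$ were already extracted in the proof of the $n$-chain link theorem, where I can read off $\upalpha_{L_1}(x) = x+2$ and $\upbeta_{L_1}(x)=1$. Substituting these yields the claimed recursive expression $\mathcal{T}_n(x) = (x+2)\overline{T_n}(x) + T_n(x)$.

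For the closed form I would then substitute $T_n(x) = x(x+1)^n$ from \eqref{eq:Tnx2} and $\overline{T_n}(x) = (x+1)^n + x^2 - 1$ from \eqref{eq:Fnx2}. Grouping the $(x+1)^n$ terms yields the factor $(x+2) + x = 2(x+1)$, producing $2(x+1)^{n+1}$, while the remaining piece $(x+2)(x^2 - 1)$ expands to $x^3 + 2x^2 - x - 2$. Summing these two contributions gives the stated closed form.

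The only non-routine step in this plan is verifying the geometric decomposition $\mathcal{T}_n = \overline{L_1 \# T_n}$; everything after that is a two-line algebraic simplification. This verification amounts to observing that removing a small arc from the bottom of the clasp and from the top of the twist band in the twist-knot diagram exhibits the interior as the connected sum $L_1 \# T_n$, and the re-identification of those arcs on the outside of the diagram is precisely the closure operation of \Figs{Fig:ConnectedSumClosure}.
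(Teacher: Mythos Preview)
Your proposal is correct and follows essentially the same approach as the paper: the paper likewise identifies $\mathcal{T}_n=\overline{L_1\#T_n}$, applies \Prop{prop:sumclosure} with the components $\upalpha_{L_1}(x)=x+2$ and $\upbeta_{L_1}(x)=1$, and then notes that the closed form follows immediately. Your write-up in fact spells out the algebraic simplification for the closed form more explicitly than the paper does.
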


\begin{proof}
The $ n $-twist knot can be decomposed into the closure of the connected sum of the Hopf link and the $ n $-twist loop, i.e., $ \mathcal{T}_n=\overline{L_1\#T_n} $, see \Figs{fig:twist-knot-decomposition}.
\begin{figure}[ht]
\centering
\includegraphics[width=.5\linewidth]{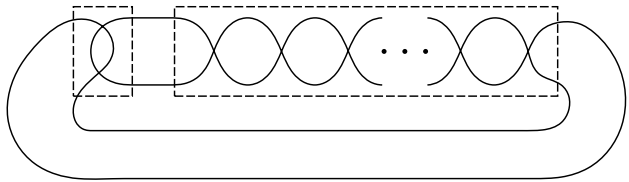}
\caption{The twist knot as the closed connected sum of the Hopf link and the twist loop.}
\label{fig:twist-knot-decomposition}
\end{figure}	

Applying \Prop{prop:sumclosure}, we have
\[
\mathcal{T}_n(x)=\upalpha_{L_1}(x)\overline{T_n}(x)+\upbeta_{L_1}(x)T_n(x),
\]
where $ \upalpha_{L_1}(x) =x+2$ and $ \upbeta_{L_1}(x) =1$  are the components of $ L_1(x) $. The closed form immediately follows.
\end{proof}

\begin{corollary}
The generating function for the  sequence $ \left\{\mathcal{T}_n(x)\right\}_{n\geq 0} $ is given by
\begin{equation*}
\mathcal{T}(x;y)=\dfrac{2x+2}{1-y(x+1)}+\dfrac{x^3+2x^2-x-2}{1-y}\cdot
\end{equation*}
\end{corollary}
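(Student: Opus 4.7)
The plan is to obtain the generating function by summing the closed-form expression for $\mathcal{T}_n(x)$ from the preceding theorem against $y^n$. Specifically, the theorem provides
\[
\mathcal{T}_n(x) = 2(x+1)^{n+1} + \left(x^3 + 2x^2 - x - 2\right),
\]
and by definition $\mathcal{T}(x;y) = \sum_{n \geq 0} \mathcal{T}_n(x)\, y^n$, so I would simply substitute the closed form and split the sum into two pieces.

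First I would pull out the factor of $2(x+1)$ from the $n$-dependent part and recognize a geometric series in $y(x+1)$:
\[
\sum_{n \geq 0} 2(x+1)^{n+1} y^n = 2(x+1) \sum_{n \geq 0} \bigl(y(x+1)\bigr)^n = \frac{2(x+1)}{1 - y(x+1)},
\]
valid as a formal power series identity in $y$. Next, since the polynomial $x^3 + 2x^2 - x - 2$ does not depend on $n$, it factors out of the remaining sum to yield
\[
\sum_{n \geq 0} \left(x^3 + 2x^2 - x - 2\right) y^n = \frac{x^3 + 2x^2 - x - 2}{1 - y}.
\]
Adding these two contributions produces exactly the stated expression for $\mathcal{T}(x;y)$.

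There is no real obstacle here; the only thing to verify is that the constant term in $n$ of $\mathcal{T}_n(x)$ (namely $x^3 + 2x^2 - x - 2$) is consistent with the evaluation $\mathcal{T}_0(x) = 2(x+1) + x^3 + 2x^2 - x - 2 = x^3 + 2x^2 + x$, matching $\overline{L_1 \# T_0}(x) = \overline{L_1}(x)$ from the table for the chain link. Once that sanity check is in place, the calculation above is just the decomposition of $\mathcal{T}(x;y)$ into the sum of two geometric series, which is immediate.
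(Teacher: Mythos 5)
Your proof is correct and is exactly the argument the paper intends (the paper leaves this corollary unproved, as it follows from the theorem's closed form $\mathcal{T}_n(x)=2(x+1)^{n+1}+x^3+2x^2-x-2$ by summing the two geometric series in $y$, just as you do). The sanity check $\mathcal{T}_0(x)=x^3+2x^2+x=\overline{L_1}(x)$ also matches the paper's own remark.
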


Now we can draw up the usual table of coefficients and the collect the OEIS A-records using the following recurrence:
\[\begin{cases}
\tau(n,0)=0,\ \tau(n,1)=2n+1, 				& n\geq 0;\\
\tau(n,k)=f(n,k-1)+2f(n,k)+t(n,k),		&k\geq 1,\ n\geq 0.
\end{cases}\]

\begin{table}[ht]
\centering
$\begin{array}{c|rrrrrrrrrrrrr}
n\ \backslash\ k		 &0		 &1		 &2		 &3		 &4		 &5		 &6		 &7		 &8		 &9		 &10		 &11		 &12\\
\midrule
0	 &0	 &1	 &2	 &1	 &	 &	 &	 &	 &	 &	 &	 &	 &\\
1	 &0	 &3	 &4	 &1	 &	 &	 &	 &	 &	 &	 &	 &	 &\\
2	 &0	 &5	 &8	 &3	 &	 &	 &	 &	 &	 &	 &	 &	 &\\
3	 &0	 &7	 &14	 &9	 &2	 &	 &	 &	 &	 &	 &	 &	 &\\
4	 &0	 &9	 &22	 &21	 &10	 &2	 &	 &	 &	 &	 &	 &	 &\\
5	 &0	 &11	 &32	 &41	 &30	 &12	 &2	 &	 &	 &	 &	 &	 &\\
6	 &0	 &13	 &44	 &71	 &70	 &42	 &14	 &2	 &	 &	 &	 &	 &\\
7	 &0	 &15	 &58	 &113	 &140	 &112	 &56	 &16	 &2	 &	 &	 &	 &\\
8	 &0	 &17	 &74	 &169	 &252	 &252	 &168	 &72	 &18	 &2	 &	 &	 &\\
90	 &0	 &19	 &92	 &241	 &420	 &504	 &420	 &240	 &90	 &20	 &2	 &	 &\\
10	 &0	 &21	 &112	 &331	 &660	 &924	 &924	 &660	 &330	 &110	 &22	 &2	 &\\
11	 &0	 &23	 &134	 &441	 &990	 &1584	 &1848	 &1584	 &990	 &440	 &132	 &24	 &2
\end{array}$
\caption{Values of $ \tau(n,k) $ for $ 0\leq n\leq 11 $ and $ 0\leq k\leq 12 $.}
\label{tab:twistknot}
\end{table}

\begin{itemize}
\item $ \tau(n,1)=2n+1$, the odd numbers \cite[\seqnum{A005408}]{Sloane};
\item $ \tau(n,2)=n^2+n+2 $, the maximum number of regions into which the plane is divided by $ n+1 $ circles \cite[\seqnum{A014206}]{Johnson};
\item  $\tau(n,3)=\frac{1}{3}\big(n^3-n+3\big)$ \cite[\seqnum{A064999}]{Sloane};
\item if $ k\geq 4 $, then  $ \left(\tau(n,k)\right)_{n\geq 3}$ is a horizontal shifted twice Pascal's triangle \cite[\seqnum{A028326}]{Sloane}.
\end{itemize}

\begin{remark}
From \Tabs{Tab:twistloop} and \Tabs{tab:twistknot} we have 
\begin{equation*}
T_2(x)=\mathcal{T}_0(x)=x^3+2x^2+x,
\end{equation*}
and from \Tabs{Tab:hitch}, \Tabs{Tab:gpfoil} and \Tabs{tab:twistknot} we read
\begin{equation*}
H_1(x)=\overline{T_3}(x)=\mathcal{T}_1(x)=x^3+4x^2+3x.
\end{equation*}
\end{remark}

\subsection{The alternative closures}
We first introduce the following notation.
\begin{notation} Let
\begin{itemize}
\item $ \mathcal{S}_1\#\mathcal{S}_1:=\left\{K\#K'\mid \left(K,K'\right)\in\mathcal{S}_1\times\mathcal{S}_1\right\} $;
\item $ \mathcal{S}_1\#\mathcal{S}_1\#\mathcal{S}_1:=\left\{K\#K'\#K''\mid \left(K,K',K''\right)\in\mathcal{S}_1\times\mathcal{S}_1\times\mathcal{S}_1\right\} $;
\item $ \mathcal{S}_1\#\mathcal{S}_{2,1}:=\left\{K\#K'\mid \left(K,K'\right)\in\mathcal{S}_1\times\mathcal{S}_{2,1}\right\} $. 
\end{itemize}
Then we have
\begin{itemize}
\item $ \mathcal{S}_{2,2} = \mathcal{S}_1\#\mathcal{S}_1 $;
\item $ \mathcal{S}_{3,1}\cup\mathcal{S}_{3,2}= \mathcal{S}_1\#\mathcal{S}_1\#\mathcal{S}_1 $;
\item$ \mathcal{S}_{3,3} = \mathcal{S}_1\#\mathcal{S}_{2,1} $.
\end{itemize}
\end{notation}

Moreover, writing $ K=U\#K $ allows us to assume that the elementary knot $ K $ can be decomposed in a way that one might disconnect some knot factors that are not taken into consideration when connecting with a copy of the actual knot. For the sake of clarity, let us  use the asterisk sign $ * $  to indicate that none of the arcs of the concerned knot are  involved when generating $ K_n $ and $ \overline{K_n} $. For example, $ K=T_1\#L_1^* $ means that  the connected sum is performed along some section of $ T_1 $, and $ L_1 $ might be disconnected. We distinguish the following excluding cases:
\begin{itemize}
\item if we cannot disconnect an  elementary knot, then $\overline{K_n} \in\left\{ \overline{T_n} ,  \overline{T_{2n}} ,  \overline{T_{3n}} , \overline{L_{n}} ,   \overline{W_{n}} , \overline{H_n}, \overline{O_n}\right\} $;
\item if $ K=U\#K^* $  with $ K^*\in\left\{T_1^*,T_1^*\#T_1^*, T_1^*\#T_1^*\#T_1^*,L_1^*\#T_1^*,L_1^*,H_1^*,O_1^*\right\} $,

then $ \overline{K_n}(x)=\left(U^2\#K_n\right)(x)$ and $ \overline{K}(x;y)=xK(x;y) $, see \Figs{Fig:closure-alternative};
\begin{figure}[ht]
\centering
\hspace*{\fill}
\subfigure[]{\includegraphics[width=0.2\linewidth]{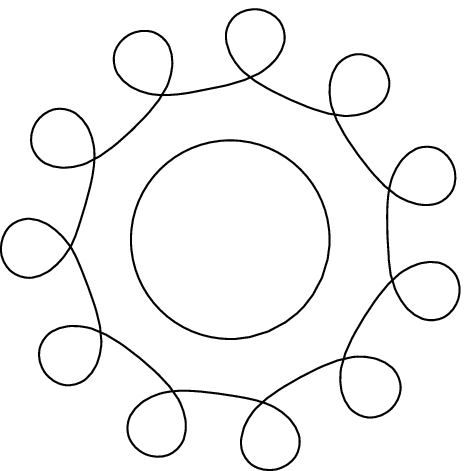}\label{subfig:ul11}}\hfill%
\subfigure[]{\includegraphics[width=0.2\linewidth]{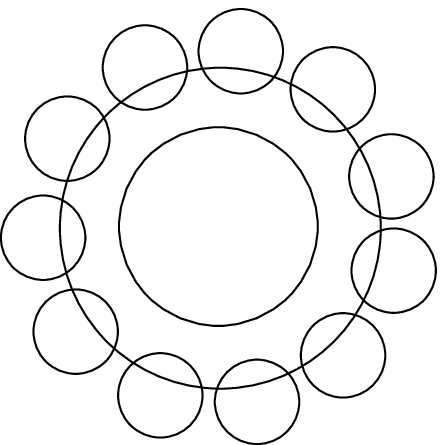}\label{subfig:ut11}}\hfill%
\subfigure[]{\includegraphics[width=0.2\linewidth]{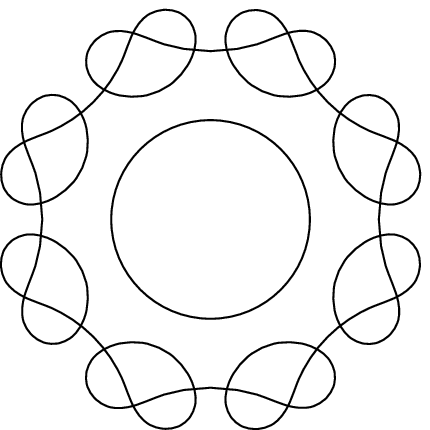}\label{subfig:uh8}}\hfill
\subfigure[]{\includegraphics[width=0.2\linewidth]{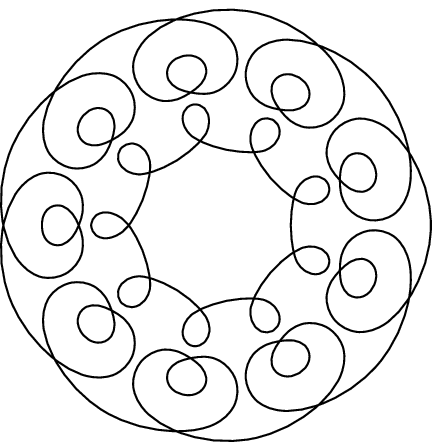}\label{subfig:ut1t1t1}}
\hspace*{\fill}
\caption{Examples of closed connected sum along the section of the unknot: \subref{subfig:ul11} $\overline{\left(U\#T_1^*\right)_{11}}$;
\subref{subfig:ut11} $\overline{\left(U\#L_1^*\right)_{11}}$;
\subref{subfig:uh8} $\overline{\left(U\#H_1^*\right)_{8}}$;
\subref{subfig:ut1t1t1} $\overline{\left(U\#T_1^*\#T_1^*\#T_1^*\right)_{9}}$. %
}
\label{Fig:closure-alternative}
\end{figure}
\item if  $ K\in \mathcal{S}_{2,2} $ and $ K=T_1\#T_1^* $, then $ \overline{K_n}(x)=\left( \overline{T_n} \#T_n\right)(x)$, see \Figs{Fig:altsum} \subref{subfig:alt1};
\item if $ K\in \mathcal{S}_{3,1}\cup\mathcal{S}_{3,2} $, then $ \overline{K_n}(x)=\begin{cases}
\left( \overline{T_{2n}} \#T_{n}\right)(x), & \textnormal{if $ K=T_2\#T_{1}^* $, see \Figs{Fig:altsum} \subref{subfig:alt2}};\\
\left(\overline{T_n}\#T_{2n}\right)(x), &   \textnormal{if $ K=T_1\#T_{2}^* $ or $ K=T_1\#T_{1}^*\#T_{1}^* $,}\\
&\textnormal{see \Figs{Fig:altsum} \subref{subfig:alt3}};
\end{cases} $
\item if $ K\in \mathcal{S}_{3,3} $, then $ \overline{K_n}(x)=\begin{cases}
\left( \overline{L_{n}} \#T_n\right)(x), & \textnormal{if $ K=L_1\#T_{1}^* $, see \Figs{Fig:altsum} \subref{subfig:alt4}};\\
\left(\overline{T_n}\#L_{n}\right)(x), & \textnormal{if $ K=T_1\#L_{1}^* $, see \Figs{Fig:altsum} \subref{subfig:alt5}}.
\end{cases} $
\end{itemize}

\begin{figure}[ht]
\centering
\hspace*{\fill}
\subfigure[]{\includegraphics[width=0.13\linewidth]{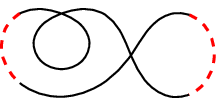}\label{subfig:alt1}}\hfill%
\subfigure[]{\includegraphics[width=0.18\linewidth]{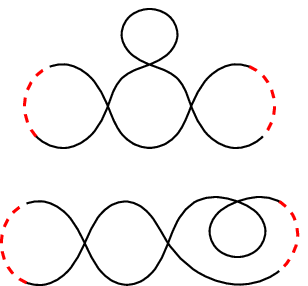}\label{subfig:alt2}}\hfill%
\subfigure[]{\includegraphics[width=0.155\linewidth]{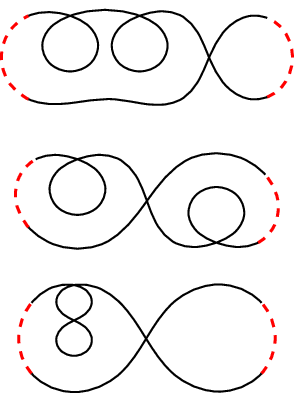}\label{subfig:alt3}}\hfill
\subfigure[]{\includegraphics[width=0.11\linewidth]{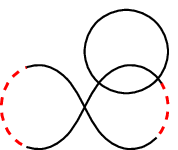}\label{subfig:alt4}}\hfill%
\subfigure[]{\includegraphics[width=0.12\linewidth]{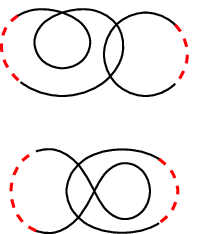}\label{subfig:alt5}}
\hspace*{\fill}
\caption{The arcs at which the connected sums are applied are indicated by the red dashed sections:
\subref{subfig:alt1} $T_1\#T_1^*$;
\subref{subfig:alt2} $T_1\#T_1^*\#T_1^*$ and $T_1\#T_2^*$;
\subref{subfig:alt3} $T_1\#T_1^*$;
\subref{subfig:alt4} $T_1\#L_1^*$;
\subref{subfig:alt5} $L_1\#T_1^*$.%
}
\label{Fig:altsum}
\end{figure}

\paragraph{Results on {\boldmath $ \left( \overline{T_n} \#T_{n}\right)(x) $}:}

\begin{enumerate}
\item Generating polynomial: \begin{equation}
\left( \overline{T_n} \#T_n\right)(x)=\left(x^2+2x+1\right)^{n}+\left(x^2-1\right)(x+1)^{n}.
\end{equation}
\item Generating function:\begin{equation}
\dfrac{1}{1-y(x+1)}\left(x^2+\dfrac{yx(x+1)}{1-y(x^2+2x+1)}\right):=\sum_{n\geq0}^{}\left( \overline{T_n} \#T_n\right)(x)y^n.
\end{equation}
\item Distribution of  $\sigma_{{\textnormal{a}}}(n,k):= \left[x^k\right]\left( \overline{T_n} \#T_n\right)(x)$:  see \Tabs{Tab:dank} \cite[\seqnum{A300192}]{Sloane}.
\begin{equation}
\begin{cases}
\sigma_{{\textnormal{a}}}(0,2)=1;\\
\sigma_{{\textnormal{a}}}(n,0)=0,\ \sigma_{{\textnormal{a}}}(n,1)=n, & n\geq 0;\\
\sigma_{{\textnormal{a}}}(n,k)=\sigma_{{\textnormal{a}}}(n-1,k-1)+\sigma_{{\textnormal{a}}}(n-1,k)\\
\hphantom{\sigma_{{\textnormal{a}}}(n,k)=}+t_{2}(n-1,k-1)+t_{2}(n-1,k),	&k\geq 1,\ n\geq 0.
\end{cases}
\end{equation}
\begin{table}[ht]
\centering
$\begin{array}{c|rrrrrrrrrrrrrrr}
n\ \backslash\ k	&0	&1	&2	&3	&4	&5	&6	&7	&8	&9	&10	&11	&12&13&14\\
\midrule
0	&0	&0	&1	&	&	&	&	&	&	&	&	&	&&&\\
1	&0	&1	&2	&1	&	&	&	&	&	&	&	&	&&&\\
2	&0	&2	&6	&6	&2	&	&	&	&	&	&	&	&&&\\
3	&0	&3	&13	&22	&18	&7	&1	&	&	&	&	&	&&&\\
4	&0	&4	&23	&56	&75	&60	&29	&8	&1	&	&	&	&&&\\
5	&0	&5	&36	&115	&215	&261	&215	&121	&45	&10	&1	&	&&&\\
6	&0	&6	&52	&206	&495	&806	&938	&798	&496	&220	&66	&12	&1&&\\
7 &0 & 7 & 71 & 336 & 987 & 2016 & 3031 & 3452 & 3010 & 2003 & 1001 & 364 & 91 & 14 &1
\end{array}$
\caption{Values of $\sigma_{{\textnormal{a}}}(n,k)$ for $ 0\leq n\leq 7 $ and $ 0\leq k\leq 14$.}
\label{Tab:dbnk}
\end{table}
\begin{itemize}
\item   $ \sigma_{{\textnormal{a}}}(n,1)=n $, the nonnegative integers \cite[\seqnum{A001477}]{Sloane};
\item  $\sigma_{{\textnormal{a}}}(n,2)=\dfrac{3n^2-n+2}{2}$ \cite[\seqnum{A143689}]{Sloane};
\item  $\sigma_{{\textnormal{a}}}(n,n+3)=\binom{2n}{n-3}$ \cite[\seqnum{A002696}]{Sloane};
\item  $\sigma_{{\textnormal{a}}}(n,n+4)=\binom{2n}{n-4}$ \cite[\seqnum{A004310}]{Sloane};
\item  $\sigma_{{\textnormal{a}}}(n,n+5)=\binom{2n}{n-5}$ \cite[\seqnum{A004311}]{Sloane}.
\end{itemize}
\end{enumerate}

\paragraph{Results on {\boldmath $ \left( \overline{T_{2n}} \#T_{n}\right)(x) $}:}
\begin{enumerate}
\item Generating polynomial: \begin{equation}
\left( \overline{T_{2n}} \#T_n\right)(x)=\left(x^3+3x^2+3x+1\right)^{n}+\left(x^2-1\right)(x+1)^{n}.
\end{equation}
\item Generating function:  \begin{equation}
\dfrac{1}{1-y(x+1)}\left(x^2+\dfrac{yx\left(x^2+3x+2\right)}{1-y\left(x^3+3x^2+3x+1\right)}\right):=\sum_{n\geq0}^{}\left( \overline{T_{2n}} \#T_n\right)(x)y^n.
\end{equation}
\item Distribution  of $\sigma_{{\textnormal{b}}}(n,k):= \left[x^k\right]\left( \overline{T_{2n}} \#T_n\right)(x)$: see \Tabs{Tab:denk}.
\begin{equation}
\begin{cases}
\sigma_{{\textnormal{b}}}(0,2)=1;\\
\sigma_{{\textnormal{b}}}(n,0)=0,\ \sigma_{{\textnormal{b}}}(n,1)=2n,\ \sigma_{{\textnormal{b}}}(n,2)=4n^2-n+1, & n\geq 0;\\
\sigma_{{\textnormal{b}}}(n,k)=\sigma_{{\textnormal{b}}}(n-1,k-1)+\sigma_{{\textnormal{b}}}(n-1,k)\\
\hphantom{\sigma_{{\textnormal{b}}}(n,k)=}+t_3(n-1,k-2)+3t_3(n-1,k-1)+2t_3(n-1,k),	&k\geq 2,\ n\geq 0.
\end{cases}
\end{equation}
\begin{table}[ht]
\centering
\resizebox{\linewidth}{!}{%
$\begin{array}{c|rrrrrrrrrrrrrrrr}
n\ \backslash\ k	&0	&1	&2	&3	&4	&5	&6	&7	&8	&9	&10	&11	&12 &13 & 14 &15\\
\midrule
0	&0	&0	&1	&	&	&	&	&	&	&	&	&	&&&&\\
1	&0	&2	&4	&2	&	&	&	&	&	&	&	&	&&&&\\
2	&0	&4	&15	&22	&16	&6	&1	&	&	&	&	&	&&&&\\
3	&0	&6	&34	&86	&129	&127	&84	&36	&9	&1	&	&	&&&&\\
4	&0	&8	&61	&220	&500	&796	&925	&792	&495	&220	&66	&12	&1&&&\\
5 & 0& 10 & 96 & 450 & 1370 &3012 & 5010 & 6436 & 6435 & 5005 & 3003 & 1365 & 455 & 105 & 15 & 1 
\end{array}$
}
\caption{Values of $ \sigma_{{\textnormal{b}}}(n,k)$ for $ 0\leq n\leq 5 $ and $ 0\leq k\leq 15$.}
\label{Tab:denk}
\end{table}
\begin{itemize}
\item   $ \sigma_{{\textnormal{b}}}(n,1)=2n $,  the nonnegative even numbers \cite[\seqnum{A005843}]{Sloane};
\item  $\sigma_{{\textnormal{b}}}(n,2)=4n^2-n+1$ \cite[\seqnum{A054556}]{Sloane}.
\end{itemize}
\end{enumerate}
\paragraph{Results on {\boldmath $ \left( \overline{T_n} \#T_{2n}\right)(x) $}:}
\begin{enumerate}
\item Generating polynomial:   \begin{equation}
\left( \overline{T_n} \#T_{2n}\right)=\left(x^3+3x^2+3x+1\right)^{n}+\left(x^2-1\right)\left(x^2+2x+1\right)^{n}.
\end{equation}
\item Generating function: \begin{equation}
\dfrac{1}{1-y\left(x^2+2x+1\right)}\left(x^2+\dfrac{yx\left(x^2+2x+1\right)}{1-y\left(x^3+3x^2+3x+1\right)}\right):=\sum_{n\geq0}^{}\left(\overline{T_n}\#T_{2n}\right)(x)y^n.
\end{equation}
\item Distribution of $\sigma_{{\textnormal{c}}}(n,k):= \left[x^k\right]\left( \overline{T_n} \#T_{2n}\right)(x)$:  see \Tabs{Tab:dfnk}.
\begin{equation}
\begin{cases}
\sigma_{{\textnormal{c}}}(0,2)=1;\\
\sigma_{{\textnormal{c}}}(n,0)=0,\ \sigma_{{\textnormal{c}}}(n,1)=n,\ \sigma_{{\textnormal{c}}}(n,2)=\dfrac{5n^2-n+2}{2}, & n\geq 0;\\
\sigma_{{\textnormal{c}}}(n,k)=\sigma_{{\textnormal{c}}}(n-1,k-2)+2\sigma_{{\textnormal{c}}}(n-1,k-1)+\sigma_{{\textnormal{c}}}(n-1,k)\\
\hphantom{\sigma_{{\textnormal{c}}}(n,k)=}+t_3(n-1,k-2)+2t_3(n-1,k)+t_3(n-1,k),	&k\geq 2,\ n\geq 0.
\end{cases}
\end{equation}
\begin{table}[ht]
\centering
\resizebox{\linewidth}{!}{%
$\begin{array}{c|rrrrrrrrrrrrrrrr}
n\ \backslash\ k	&0	&1	&2	&3	&4	&5	&6	&7	&8	&9	&10	&11	&12&13&14&15\\
\midrule
0	&0	&0	&1	&	&	&	&	&	&	&	&	&	&&&&\\
1	&0	&1	&3	&3	&1	&	&	&	&	&	&	&	&&&&\\
2	&0	&2	&10	&20	&20	&10	&2	&	&	&	&	&	&&&&\\
3	&0	&3	&22	&70	&126	&140	&98	&42	&10	&1	&	&	&&&&\\
4	&0	&4	&39	&172	&453	&792	&966	&840	&522	&228	&67	&12	&1&&&\\
5 & 0& 5 & 61 & 345 & 1200 & 2871 & 5005 & 6567 & 660 & 5115 & 3047 & 1375 & 456 & 105 & 15 & 1
\end{array}$
}
\caption{Values of $ \sigma_{{\textnormal{c}}}(n,k)$ for $ 0\leq n\leq 5 $ and $ 0\leq k\leq 15$.}
\label{Tab:dfnk}
\end{table}
\begin{itemize}
\item  $ \sigma_{{\textnormal{c}}}(n,1)=n $, the nonnegative integers \cite[\seqnum{A001477}]{Sloane};
\item  $\sigma_{{\textnormal{c}}}(n,2)=\dfrac{5n^2-n+2}{2}$ \cite[\seqnum{A140066}]{Sloane}.
\end{itemize}
\end{enumerate}

\paragraph{Results on {\boldmath $ \left( \overline{L_{n}} \#T_n\right)(x)$}:}
\begin{enumerate}
\item Generating polynomial: 
\begin{equation}
\big( \overline{L_{n}} \#T_{n}\big)(x)=\left(2x^2+4x+2\right)^n+\left(x^2-1\right)\left(x^2+3x+2\right)^n.
\end{equation}
\item Generating function: 
\begin{equation}
\dfrac{1}{1-y\left(x^2+3x+2\right)}\left(x^2+\dfrac{yx(x+1)}{1-y(2x^2+4x+2)}\right):=\sum_{n\geq0}^{}\big( \overline{L_{n}} \#T_{n}\big)(x)y^n.
\end{equation}
\item Distribution of $\sigma_{{\textnormal{d}}}(n,k):= \left[x^k\right]\left( \overline{L_{n}} \#T_{n}\right)(x)$:  see \Tabs{Tab:dink}.
\begin{equation}
\begin{cases}
\sigma_{{\textnormal{d}}}(0,2)=1;\\
\sigma_{{\textnormal{d}}}(n,0)=0,\ \sigma_{{\textnormal{d}}}(n,1)= n2^{n-1}, \ \sigma_{{\textnormal{d}}}(n,2)= 2^{n-3}\left(7n^2-3n+8\right),& n\geq 0;\\
\sigma_{{\textnormal{d}}}(n,k)=\sigma_{{\textnormal{d}}}(n-1,k-2)+3\sigma_{{\textnormal{d}}}(n-1,k-1)+2\sigma_{{\textnormal{d}}}(n-1,k)\\
\hphantom{\sigma_{{\textnormal{d}}}(n,k)=}+w(n-1,k-1)+w(n-1,k),	&k\geq 2,\ n\geq 0.
\end{cases}
\end{equation}

\begin{table}[ht]
\centering
\resizebox{\linewidth}{!}{%
$\begin{array}{c|rrrrrrrrrrrrrrr}
n\ \backslash\ k		 &0		 &1		 &2		 &3		 &4		 &5		 &6		 &7		 &8		 &9		 &10		 &11 	 &12	 &13	 &14\\
\midrule
0	 &0	 &0	 &1	 &	 &	 &	 &	 &	 &	 &	 &	 &	 &	 &	 &\\
1	 &0	 &1	 &3	 &3	 &1	 &	 &	 &	 &	 &	 &	 &	 &	 &	 &\\
2	 &0	 &4	 &15	 &22	 &16	 &6	 &1	 &	 &	 &	 &	 &	 &	 &	 &\\
3	 &0	 &12	 &62	 &133	 &153	 &102	 &40	 &9	 &1	 &	 &	 &	 &	 &	 &\\
4	 &0	 &32	 &216	 &632	 &1047	 &1076	 &707	 &296	 &77	 &12	 &1	 &	 &	 &	 &\\
5	 &0	 &80	 &672	 &2520	 &5550	 &7941	 &7705	 &5133	 &2325	 &695	 &131	 &15	 &1\\
6	 &0	 &192	 &1936	 &8896	 &24612	 &45612	 &59567	 &56106	 &38314	 &18778	 &6432	 &1470	 &210	 &18	 &1
\end{array}$
}
\caption{Values of $ \sigma_{{\textnormal{d}}}(n,k)$ for $ 0\leq n\leq 6 $ and $ 0\leq k\leq 14 $.}
\label{Tab:dink}
\end{table}

\begin{itemize}
\item $ \sigma_{{\textnormal{d}}}(n,1)=n2^{n-1} $ \cite[\seqnum{A001787}]{Sloane};
\item $ \sigma_{{\textnormal{d}}}(n,2n+1)=3n$, the multiples of 3 \cite[\seqnum{A008585}]{Sloane}.
\end{itemize}
\end{enumerate}
\paragraph{Results on {\boldmath $ \left( \overline{T_n} \#L_n\right)(x)$}:}
\begin{enumerate}
\item Generating polynomial:
\begin{equation}
\left( \overline{T_n} \#L_n\right)(x)=\left(2x^2+4x+2\right)^n+\left(x^2-1\right)(2x+2)^n.
\end{equation}
\item Generating function: 
\begin{equation}
\dfrac{1}{1-y(2x+2)}\left(x^2+\dfrac{yx(2x+2)}{1-y(2x^2+4x+2)}\right):=\sum_{n\geq0}^{}\left(\overline{T_n}\#L_{n}\right)(x)y^n.
\end{equation}
\item Distribution of $\sigma_{{\textnormal{e}}}(n,k):= \left[x^k\right]\left( \overline{T_n} \#L_{n}\right)(x)$:  see \Tabs{Tab:djnk}.
\begin{equation}
\begin{cases}
\sigma_{{\textnormal{e}}}(0,2)=1;\\
\sigma_{{\textnormal{e}}}(n,0)=0,\ \sigma_{{\textnormal{e}}}(n,1)=n2^n\  \cite[\seqnum{A036289}]{Sloane}, & n\geq 0;\\
\sigma_{{\textnormal{e}}}(n,k)=2\sigma_{{\textnormal{e}}}(n-1,k-1)+2\sigma_{{\textnormal{e}}}(n-1,k)\\
\hphantom{\sigma_{{\textnormal{e}}}(n,k)=}+2w(n-1,k-1)+2w(n-1,k),	&k\geq 1,\ n\geq 0.
\end{cases}
\end{equation}

\begin{table}[ht]
\centering
\resizebox{\linewidth}{!}{%
$\begin{array}{c|rrrrrrrrrrrrrr}
n\ \backslash\ k		 &0		 &1		 &2		 &3		 &4		 &5		 &6		 &7		 &8		 &9		 &10		 & 11 	 &12\\
\midrule
0	 &0	 &0	 &1	 &	 &	 &	 &	 &	 &	 &	 &	 &	 &\\
1	 &0	 &2	 &4	 &2	 &	 &	 &	 &	 &	 &	 &	 &	 &\\
2	 &0	 &8	 &24	 &24	 &8	 &	 &	 &	 &	 &	 &	 &	 &\\
3	 &0	 &24	 &104	 &176	 &144	 &56	 &8	 &	 &	 &	 &	 &	 &\\
4	 &0	 &64	 &368	 &896	 &1200	 &960	 &464	 &128	 &16	 &	 &	 &	 &\\
5	 &0	 &160	 &1152	 &3680	 &6880	 &8352	 &6880	 &3872	 &1440	 &320	 &32	 &	 &\\
6	 &0	 &384	 &3328	 &13184	 &31680	 &51584	 &60032	 &51072	 &31744	 &14080	 &4224	 &768	 &64
\end{array}$
}
\caption{Values of $ \sigma_{{\textnormal{e}}}(n,k) $ for $ 0\leq n\leq 6 $ and $ 0\leq k\leq 12 $.}
\label{Tab:djnk}
\end{table}

\end{enumerate}

\small \textbf{2010 Mathematics Subject Classifications}:  05A19;  57M25.
\end{document}